\newtheorem{theorem}{Theorem}[section]
\newtheorem*{theorem*}{Theorem}
\newtheorem{definition}[theorem]{Definition}
\newtheorem{lemma}[theorem]{Lemma}
\newtheorem{corollary}[theorem]{Corollary}
\newtheorem{proposition}[theorem]{Proposition}
\newtheorem{remark}[theorem]{Remark}
\newtheorem{example}[theorem]{Example}
\newtheorem{question}[theorem]{Question}
\newcommand{\wt}[1]{\widetilde{#1}}
\newcommand{\ov}[1]{\overline{#1}}
\newcommand{\lann}{\mathrm{l.ann}}
\newcommand{\rann}{\mathrm{r.ann}}
\newcommand{\ZZ}{\mathbb{Z}}
\newcommand{\Frac}{\mathrm{Frac}}
\title{Ring theoretical properties of affine cellular algebras}
\author[P. Carvalho]{Paula A.A.B. Carvalho}
\address{Departamento de Matemática, Universidade do Porto, Rua Campo Alegre 697, 4169-007 Porto, Portugal}
\email{pbcarval@fc.up.pt}
\author[S. Koenig]{Steffen Koenig}
\address{Institute of Algebra and Number Theory, University of Stuttgart, Pfaffenwaldring 57, 70569 Stuttgart, Germany}
\email{skoenig@mathematik.uni-stuttgart.de}
\author[C. Lomp]{Christian Lomp}
\address{Departamento de Matemática, Universidade do Porto, Rua Campo Alegre 697, 4169-007 Porto, Portugal}
\email{clomp@fc.up.pt}
\author[A. Shalile]{Armin Shalile}
\address{Institute of Algebra and Number Theory, University of Stuttgart, Pfaffenwaldring 57, 70569 Stuttgart, Germany}
\email{shalile@mathematik.uni-stuttgart.de}
\keywords{Affine Cellular Algebras, polynomial identity, Noetherian rings, asymptotic algebra}
 \subjclass[2010]{Primary 16G30; 16G10; 16P40, 16P90}
\begin{document}
\maketitle

\begin{abstract}
As a generalisation of Graham and Lehrer's cellular algebras, {\it affine cellular algebras} have been introduced in \cite{KoenigXi} in order to treat affine versions of diagram algebras like affine Hecke algebras of type A and  affine Temperley–Lieb algebras in a unifying fashion. 
Affine cellular algebras include Kleshchev's graded quasihereditary algebras, Khovanov-Lauda-Rouquier algebras and various other classes of algebras. In this paper we will study ring theoretical properties of affine cellular algebras.  We show that any affine cellular algebra $A$ satisfies a polynomial identity.
Furthermore, we show that $A$ can be embedded into its asymptotic algebra if the occurring commutative affine $k$-algebras $B_j$ are reduced and the determinants of the swich matrices are non-zero divisors. As a consequence, we show that the Gelfand-Kirillov dimension  of $A$ is less than or equal to the largest Krull dimension of the algebras $B_j$ and that equality hold, in case all affine cell ideals are idempotent or if the Krull dimension of the algebras $B_j$ is less than or equal to $1$. Special emphasis is given to the question when an affine cell ideals is idempotent, generated by an idempotent or finitely generated.
\end{abstract}
\section{Introduction}

 {\em Affine cellular algebras} have been introduced in \cite{KoenigXi} as a generalisation of Graham and Lehrer's cellular algebras. Affine versions of diagram algebras like affine Hecke algebras of type A and  affine Temperley–Lieb algebras are examples of affine cellular algebras. In this paper we will study ring theoretical properties of affine cellular algebras. Recall that an affine cellular algebra $A$ over a commutative Noetherian ring $k$ has a chain of ideals $0=J_{-1}\subset J_0\subset J_1 \subset \cdots \subset J_n=A$, such that $J_j/J_{j-1}$ is an affine cell ideal of $A/J_{j-1}$ and as such is isomorphic, as an $A/J_{j-1}$-bimodule, to a generalised matrix ring $\wt{M_{m_j}(B_j)}$ over some affine commutative $k$-algebra whose multiplication is deformed by a swich matrix $\psi_j \in M_{m_j}(B_j)$. 
Since the publication of \cite{KoenigXi}, several classes of algebras, like the Khovanov-Lauda-Rouquier algebras, Kleshchev's graded quasihereditary algebras, the affine Birman-Murakami-Wenzl algebras, affine Brauer algebras, affine q-Schur algebras and BLN-algebras were shown to be affine cellular (see \cite{Cui_BMW, Cui_Brauer, Cui_BLN, GuilhotMiemietz,KeshchevLoubert,Kleshchev, KleshchevLoubertMiemietz, Nakajima}). These classes of algebras are by definition subclasses of affine cellular algebras and contain other interesting examples, like Kato's geometric extension algebras (see \cite{Kleshchev}*{10.2}). Although it was shown that many algebras are affine cellular, their ring theoretical structure has not been studied in much detail apart from \cite{KoenigXi}.

Affine cellular algebras are built up by affine cell ideals, which will be studied first.  For any element $\psi \in M_n(B)$ of the $n\times n$-matrix ring $M_n(B)$ over a commutative affine $k$-algebra $B$, with $k$ a commutative Noetherian ring, the generalised matrix ring is the associative (possibly non-unital) ring $\wt{M_n(B)}$ which is, as a $k$-module, equal to $M_n(B)$ but whose multiplication is deformed by setting $a*b:=a\psi b$, for elements $a,b \in \wt{M_n(B)}$. 
An affine cell ideal $J$  of an algebra $A$ is isomorphic as a ring to a generalised matrix ring $\wt{M_n(B)}$.
By \cite{KoenigXi}*{Theorem 4.1}, idempotent affine cell ideals are  important for the understanding of the representation theory of affine cellular algebras.
We show in Theorem \ref{endomorphismring_cellideal}  and Proposition \ref{Proposition_finitelygenerated} that
\begin{enumerate}
\item[(1)] $J$ is an idempotent ideal if and only if $B$ is generated (as an ideal) by the entries of $\psi$. In this case
 $J$ is a finitely generated left and right ideal of $A$.
\item[(2)] $J$ is generated by an idempotent in $A$ if and only if $J$ is a principal left ideal of $A$ if and only if $\mathrm{det}(\psi)$ is invertible in $B$. In this case, $A$ decomposes as the ring direct product $A\simeq A/J \times M_n(B)$.
\item[(3)] $\mathrm{End}( _AJ) \simeq M_n(B)$ if $J$ is idempotent or if $J$ contains an element that is a central non-zero divisor in $J$. The latter case is fulfilled in case $\mathrm{det}(\psi)$ is a non-zero divisor in $B$.
\end{enumerate}
Statement (1) gives an alternative description of idempotent affine cell ideals compared with the equivalent conditions found in \cite{KoenigXi}*{Theorem 4.1}. 
Statement (2) clarifies the relation between an affine cell ideal being generated by an idempotent and being an idempotent ideal. Statement (3) has been shown in \cite{KoenigXi}*{Theorem 4.3} for idempotent affine cell ideals with $B$ having zero Jacobson radical. In Statement (3), i.e. Theorem \ref{endomorphismring_cellideal}, the assumption on $B$ is removed. Moreover, an alternative condition (to the idempotence of $J$) is offered to guarantee that 
$\mathrm{End}( _AJ) \simeq M_n(B)$ (as well as $\mathrm{End}( _A\Delta) \simeq B$, see Section 3 for the definition of $\Delta$). 
Example \ref{idempotent_zero_determinant} shows that there are idempotent affine cell ideals with $\mathrm{det}(\psi)=0$, while Example \ref{example_affineTL} shows that some Temperley-Lieb algebras contain non-idempotent affine cell ideals $J$ such that $\mathrm{det}(\psi)$ is a non-zero divisor in $B$.

Our main result is Theorem \ref{properties_aca} which not only shows that affine cellular algebras satisfy a polynomial identity, but also offers a condition to embed an affine cellular algebra $A$ in its asymptotic algebra, which by definition is the direct product of matrix rings over the commutative affine $k$-algebras $B_j$ occurring in the cell structure of $A$:

\begin{theorem*}
	Let $A$ be an affine cellular algebra with cellular structure
	$$ 0=J_{-1} \subset J_0 \subset J_1 \subset \cdots \subset J_n = A,$$
	and $J_{j}/J_{j-1} = \wt{M_{n_j}(B_j)}$ for affine commutative $k$-algebras $B_j$, matrices $\psi_j\in M_{n_j}(B_j)$ and centre $c(A)$.
	Let $m=\mathrm{min}\{ k \mid \rann_{A/J_{k-1}}(J_k/J_{k-1})=0\}$.
	Then
	\begin{enumerate}
		\item $A$ satisfies a polynomial identity.
		\item The following statements are equivalent:
			\begin{enumerate}
				\item[(a)] $A/J_j$ is semiprime for all $j=0,1,\ldots, m-1$;
				\item[(b)] $B_j$ is reduced and $\mathrm{det}(\psi_j)$ is not a zero divisor in $B_j$ for all $j=0,1,\ldots, m-1$;
				\item[(c)] $\Phi: A \rightarrow \mathrm{End}\left(_{A/J_{m-1}}{J_m/J_{m-1}}\right) \times \cdots \times  \mathrm{End}(_{A}{J_0})$ is an embedding  and $B_j$ is reduced for all $j=0,1,\ldots, m-1$.
			\end{enumerate}
		In either of these cases 
		\begin{enumerate}
			\item[(i)] $\mathrm{End}(_{A/J_{j-1}}{J_j/J_{j-1}})\simeq M_{n_j}(B_j)$ for all $j=0,1,\ldots, m-1$;
			\item[(ii)] $c(A) = \Phi^{-1}\left( B_m \times \cdots \times B_0\right).$
		\end{enumerate} 
	\item If $\mathrm{det}(\psi_j)$ is invertible in $B_j$ for all $j$, then $A$ is isomorphic to its asymptotic algebra.
\end{enumerate}
\end{theorem*}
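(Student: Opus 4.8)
The plan is to prove statement (3) by induction on the length $n$ of the cellular chain $0=J_{-1}\subset J_0\subset\cdots\subset J_n=A$, splitting off one affine cell ideal at a time and feeding into the decomposition already established in the introduction's statement (2) (Proposition~\ref{Proposition_finitelygenerated}). For the base case $n=0$ we have $A=J_0=\wt{M_{n_0}(B_0)}$. Since $\det(\psi_0)$ is invertible in $B_0$, the adjugate formula shows $\psi_0\in M_{n_0}(B_0)$ is an invertible matrix, and the additive bijection $\theta\colon a\mapsto \psi_0 a$ satisfies $\theta(a*b)=\psi_0(a\psi_0 b)=(\psi_0 a)(\psi_0 b)=\theta(a)\theta(b)$, hence is a ring isomorphism from the deformed ring $\wt{M_{n_0}(B_0)}$ (with product $a*b=a\psi_0 b$) onto the ordinary matrix ring $M_{n_0}(B_0)$. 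Thus $A\cong M_{n_0}(B_0)$, which is precisely the asymptotic algebra in this case.

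For the inductive step, regard $J_0=J_0/J_{-1}$ as an affine cell ideal of $A=A/J_{-1}$ with deformation matrix $\psi_0$ and algebra $B_0$. Since $\det(\psi_0)$ is invertible, statement (2) applies and produces a ring direct product decomposition $A\cong (A/J_0)\times M_{n_0}(B_0)$, where the second factor arises because $J_0$ is generated by a (necessarily central) idempotent of $A$. The quotient $A/J_0$ is again affine cellular, with chain $0\subset J_1/J_0\subset\cdots\subset J_n/J_0=A/J_0$ of top index $n-1$, whose $j$-th layer is $(J_j/J_0)/(J_{j-1}/J_0)\cong J_j/J_{j-1}=\wt{M_{n_j}(B_j)}$ and therefore retains the same data $(\psi_j,B_j)$ for $j=1,\ldots,n$. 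In particular each $\det(\psi_j)$ stays invertible, so the induction hypothesis gives $A/J_0\cong\prod_{j=1}^{n}M_{n_j}(B_j)$. Combining the two, $A\cong M_{n_0}(B_0)\times\prod_{j=1}^{n}M_{n_j}(B_j)=\prod_{j=0}^{n}M_{n_j}(B_j)$, the asymptotic algebra.

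I do not expect a serious obstacle, since essentially all the content sits in statement (2): once $\det(\psi_0)$ is invertible the bottom cell ideal splits off as a genuine matrix-ring direct factor, and the deformation disappears exactly because $\psi_0$ becomes invertible. The only point needing a little care is verifying that the induced cellular structure on $A/J_0$ really carries the remaining layer data $(\psi_j,B_j)$ unchanged; this follows from the third isomorphism theorem together with the definition of affine cellularity, which is set up precisely so that quotients by initial segments of the chain inherit the cell structure of the surviving layers.
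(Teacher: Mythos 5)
Your induction for statement (3) is correct and is in fact the same argument as the paper's: the paper disposes of (3) with the single line ``follows from Proposition \ref{Proposition_finitelygenerated}(4) using induction,'' and your proposal is exactly that induction written out --- split off $J_0$ as a ring direct factor $A\simeq A/J_0\times M_{n_0}(B_0)$ using the central idempotent $\ov\alpha^{-1}(\mathrm{det}(\psi_0)^{-1}\psi_0^+)$, observe that $A/J_0$ inherits the cellular chain with the same layer data $(\psi_j,B_j)$ for $j\geq 1$, and recurse. Your base case, using the bijection $a\mapsto\psi_0 a$ as a ring isomorphism $\wt{M_{n_0}(B_0)}\to M_{n_0}(B_0)$, is a clean direct verification of what the paper extracts from the surjectivity of $\varphi$ in the proof of Proposition \ref{Proposition_finitelygenerated}(4).

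The genuine problem is that statement (3) is only a small fraction of the theorem, and your proposal is silent on everything else. Statement (1) (that $A$ is a PI-algebra) needs the lifting result of Theorem \ref{Theorem_cell_ideals}(1), i.e.\ that a monic multilinear identity $f$ for $A/J$ and a monic multilinear identity $g$ for the cell ideal $J$ compose to a monic identity $h=g(f,\ldots,f)$ for $A$; nothing in your argument supplies this, and it does not follow from the direct-product decomposition because (1) holds with no invertibility hypothesis on the $\mathrm{det}(\psi_j)$. Statement (2) is the real core of the theorem: the equivalence of (a) and (b) rests on Theorem \ref{Theorem_cell_ideals}(2) (semiprimeness of $A$ modulo a cell ideal versus reducedness of $B$ and $\mathrm{det}(\psi)$ being a non-zero divisor), the equivalence with (c) rests on Lemma \ref{embeddings} (the inductive construction of $\Phi$ into the product of endomorphism rings, with injectivity controlled by $\rann_{J_k/J_{k-1}}(J_k/J_{k-1})=0$) combined with Lemma \ref{annihilators} (which translates that annihilator condition into the determinant condition), and the consequences (2.i) and (2.ii) need Proposition \ref{Proposition_determinant}(3) and the centre formula of Lemma \ref{embeddings}. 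None of these ingredients can be bypassed by your splitting argument, precisely because under hypothesis (2)(b) the determinants are merely non-zero divisors, not units, so $J_0$ is in general \emph{not} generated by an idempotent and $A$ does \emph{not} decompose as a direct product --- one only gets an embedding $\Phi$, not an isomorphism (Example \ref{example_affineTL} is such a case). So as a proof of the stated theorem the proposal has a substantial gap: it proves (3) but omits (1), (2), (2.i) and (2.ii) entirely.
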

The embedding $\Phi:A\rightarrow M_{n_m}(B_m) \times \cdots \times M_{n_0}(B_0)$ allows to estimate the Gelfand-Kirillov dimension of $A$ (see Corollary \ref{GKdim}):
$$GKdim(A) \leq \mathrm{max} \{ Kdim(B_0), \ldots, Kdim(B_m)\},$$ 
where $Kdim(B)$ denotes the Krull dimension of a commutative affine $k$-algebra $B$. Equality holds if $J_{j}/J_{j-1}$ is a finitely generated left ideal of $A/J_{j-1}$ for all $j\leq m$. The latter condition is fulfilled if the affine cell ideals $J_{j}/J_{j-1}$ are idempotent or if $Kdim(B_j)\leq 1$.

Moreover, under the equivalent conditions of Theorem \ref{properties_aca}(2) the centre $c(A)$ of an affine cellular algebra $A$ is Noetherian if and only if $A$ is left (and right) Noetherian and finitely generated over $c(A)$ (see Corollary \ref{NoetherianCentre}). Standard facts on PI-algebras (see \cite{McConnellRobson}*{13.10.3, 13.10.7}) allow to conclude that simple left $A$-modules over an affine cellular affine $k$-algebra $A$ are finite dimensional over $k$ (compare with \cite{KoenigXi}*{Theorem 3.12}). Moreover, any affine cellular affine $k$-algebra is a Jacobson ring, i.e. prime ideals are intersections of (one-sided) maximal ideals, and has finite classical Krull dimension, i.e. there exists an upper bound for the lengths of chains of prime ideals.

The paper is organised as follows: Elementary facts about swich algebras, by which we mean rings $\wt{R}$ whose underlying abelian group stem from a unital associative ring $R$ and whose multiplication is deformed by a swich element $\psi \in R$, i.e. $a*b = a\psi b$, for $a,b\in R$, are proved in Section 2. Proposition \ref{GeneralisedMatrixRings} specialises to general matrix rings $\wt{M_n(B)}$ and is the main result of this section. Section 3 introduces affine cell ideals as defined in \cite{KoenigXi} and applies the results of Section 2. The main difference between Section 2 and Section 3 is that an affine cell ideal $J$ of an algebra $A$, apart from being a generalised matrix ring, is also an ideal whose $A$-bimodule structure matters. For instance it is possible that $J$ is not finitely generated, as a module over itself, although it is finitely generated as a left $A$-module. Theorem \ref{endomorphismring_cellideal} and Proposition \ref{Proposition_finitelygenerated} are the main results, while Proposition \ref{propositon_construction} offers a method to realise a generalised matrix ring as an affine cell ideal of an algebra $A$, whose $A$-bimodule structure is controlled by a group action on the rows and columns of the matrices. The main result of the paper,  Theorem \ref{properties_aca}, is proved in Section 4 and its consequences regarding the Gelfand-Krillov dimension of an affine cellular algebra is mentioned. The paper finishes with a section on the Noetherianess of affine cellular algebras and several open questions.

All rings in this paper are considered to be associative, but not necessarily unital. For any ring $R$ and subset $X$ of $R$, we denote the left annihilator of $X$ in $R$ by $\lann_R(X)=\{a\in R \mid ax=0,\:  \forall x\in X\}$  and the right annihilator of $X$ in $R$ by $\rann_R(X)=\{a\in R \mid xa=0,\:  \forall x\in X\}$. The centre of a ring $R$ is denoted by $c(R)$, while $M_n(R)$ denotes the ring of $n\times n$-matrices over $R$. The matrices $E_{ij}\in M_n(R)$, whose $(i,j)$-th entry is $1$ and $0$ elsewhere, are called the {\em  matrix units} of $M_n(R)$. Our main ring theoretic reference is the book \cite{McConnellRobson}. 

\section{Swich Algebras}

Let $R$ be any associative unital ring.  Any  element $\psi \in R$ allows to ``deform'' the multiplication on $R$ to yield a new ring structure on the additive group $(R,+)$ by defining a new (associative) multiplication as $a*b = a\psi b$ for any $a,b \in R$. We denote this new ring by $\wt{R}$ if $\psi$ is understood or alternatively as a pair $(R,\psi)$. Then $\wt{R}$ is an associative not necessarily unital ring. Furthermore, there are two ring homomorphisms:
$$\varphi: \wt{R} \rightarrow R, \qquad a \mapsto a\psi, \qquad \mbox{ and } \qquad \varphi': \wt{R} \rightarrow R, \qquad a \mapsto \psi a,$$
which satisfy $\varphi(a)b = a*b = a \varphi'(b)$ for any $a,b \in R$. The kernels of $\varphi$ resp. $\varphi'$ are square-zero ideals  of $\wt{R}$ and coincide with the left resp. right annihilator of $\psi$ in $R$, i.e. $\mathrm{Ker}(\varphi)= \mathrm{l.ann}_R(\psi)$ and $\mathrm{Ker}(\varphi')= \mathrm{r.ann}_R(\psi)$. It is not difficult to see that $\varphi$ (resp. $\varphi'$) is injective if and only if $\psi$ is not a right (resp. left) zero divisor in $R$ and that $\varphi$ (resp. $\varphi'$) is surjective if and only $\psi$ has a left (resp. right) inverse in $R$.

An associative (not necessarily unital) ring $R$ is called {\em semiprime} if it does not contain any non-zero nilpotent ideal. A commutative ring $R$ is semiprime if and only if $R$ is reduced, i.e. $R$ has no non-zero nilpotent element.

\begin{lemma}\label{Lemma_semiprime}
$\wt{R}$ is semiprime if and only if $R$ is semiprime and 
 $\psi$ is neither a left nor a right zero divisor in $R$.
\end{lemma}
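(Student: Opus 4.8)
The plan is to work with the standard element-wise characterisation of semiprimeness, valid for any associative (possibly non-unital) ring $S$: namely, $S$ is semiprime if and only if $aSa=0$ implies $a=0$. First I would record the key computation in $\wt{R}$: since $a*x*a = a\psi x\psi a$, the set $a\wt{R}a$ equals $a\psi R\psi a$. This single identity, together with the containment $\psi R\psi\subseteq R$, links the two multiplications and drives both implications. I would also recall from the discussion preceding the lemma that $\lann_R(\psi)$ and $\rann_R(\psi)$ are square-zero ideals of $\wt{R}$, and that these coincide with the conditions of $\psi$ being a right, respectively left, zero divisor.

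For the forward direction, assume $\wt{R}$ is semiprime. Since $\lann_R(\psi)$ and $\rann_R(\psi)$ are square-zero (hence nilpotent) ideals of $\wt{R}$, semiprimeness forces both to vanish; by the remarks in Section 2 this says exactly that $\psi$ is neither a left nor a right zero divisor in $R$. To see that $R$ itself is semiprime, take $a\in R$ with $aRa=0$. Then $a\wt{R}a = a\psi R\psi a\subseteq aRa=0$, so the characterisation applied to the semiprime ring $\wt{R}$ gives $a=0$; hence $R$ is semiprime.

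For the converse, assume $R$ is semiprime and that $\psi$ is neither a left nor a right zero divisor. Suppose $a\wt{R}a=0$, i.e.\ $a\psi R\psi a=0$. The trick is to conjugate by $\psi$: multiplying on the left and on the right by $\psi$ gives $(\psi a\psi)R(\psi a\psi)=\psi(a\psi R\psi a)\psi=0$, so semiprimeness of $R$ yields $\psi a\psi=0$. Now I cancel $\psi$ twice using the hypotheses on zero divisors: $\psi a\psi=0$ forces $\psi a=0$ because $\psi$ is not a right zero divisor (i.e.\ $\lann_R(\psi)=0$), and then $\psi a=0$ forces $a=0$ because $\psi$ is not a left zero divisor (i.e.\ $\rann_R(\psi)=0$). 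Thus $a\wt{R}a=0$ implies $a=0$, so $\wt{R}$ is semiprime. The only delicate points are to apply the element-wise semiprimeness criterion correctly in the non-unital setting and to keep the left/right zero-divisor conventions aligned with those fixed in Section 2; the conjugation step in the converse is the one genuinely clever move, but once found the rest is routine.
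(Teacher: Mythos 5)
Your proof is correct, and it takes a genuinely different route in one respect, although the central trick is the same as the paper's. The paper argues purely with ideals and never needs an element-wise criterion: for the forward direction it observes that any square-zero ideal $I$ of $R$ is also an ideal of $\wt{R}$ with $I*I = I\psi I \subseteq I^2 = 0$; for the converse it takes an ideal $I$ of $\wt{R}$ with $I*I=0$, forms $I' = R\varphi(\varphi'(I))R = R\psi I\psi R$, checks $(I')^2 \subseteq I*I = 0$, deduces $\psi I\psi = 0$ using semiprimeness of $R$ and unitality, and recovers $I=0$ from injectivity of $\varphi$ and $\varphi'$. Your sandwiching step $(\psi a\psi)R(\psi a\psi) = \psi(a\psi R\psi a)\psi = 0$ followed by the two cancellations is precisely the element-wise shadow of this, with the cancellations playing the role of the injectivity of $\varphi$ and $\varphi'$. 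What your route buys is brevity and symmetry; what it costs is dependence on the equivalence, for a \emph{non-unital} ring $S$, of ``$S$ has no nonzero nilpotent ideals'' and ``$aSa=0$ implies $a=0$''. You invoke the hard direction of this equivalence (semiprime implies the element-wise condition) for the non-unital ring $\wt{R}$ when deducing that $R$ is semiprime in the forward implication. That direction is indeed true without a unit, but the usual textbook proof uses $a\in SaS$ and so does not apply verbatim; a correct argument in the non-unital setting notes that $a^3\in aSa=0$, that consequently the ideal of $S$ generated by $a^2$ has square zero (forcing $a^2=0$ by semiprimeness), and that then the ideal generated by $a$ has square zero, forcing $a=0$. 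Since $\wt{R}$ is in general non-unital, you should either include this argument or cite a reference stating the criterion for rings without identity; with that supplied, your proof is complete. The paper's ideal-theoretic formulation avoids this subtlety entirely, at the price of slightly more bookkeeping.
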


\begin{proof} 
The element $\psi$ is neither a left nor a right zero divisor in $R$ if and only if $\mathrm{Ker}(\varphi)=\mathrm{Ker}(\varphi')=\{0\}.$ Suppose  that $\wt{R}$ is semiprime.  Then the nilpotent ideals  $\mathrm{Ker}(\varphi)$ and  $\mathrm{Ker}(\varphi')$ have to be zero. 
If $I$ is a square-zero ideal of $R$, then  $I*I=I\psi I \subseteq I^2=0$. As $I$ is an ideal of $\wt{R}$ and $\wt{R}$ is semiprime, $I=0$, showing that $R$ has to be semiprime.

Suppose that $R$ is semiprime and that the kernels of $\varphi$ and $\varphi'$ are zero. Let $I$ be an ideal of $\wt{R}$ with $I*I=0$ and consider the induced ideal $I'=R\varphi(\varphi'(I))R$ of $R$. Since $$(I')^2 = R\varphi(\varphi'(I))R\varphi(\varphi'(I))R = \left(R * I * R\right)* \left( I * R\right) \subseteq I*I = 0$$ and $R$ semiprime, we get $I'= 0$. Thus, as $R$ is unital, $\varphi(\varphi'(I))=0$  and as $\varphi$ and $\varphi'$ are injective, $I=0$, i.e. $\wt{R}$ is semiprime.
\end{proof}

Now let us consider the endomorphism ring $\mathrm{End}( _{\wt{R}}\wt{R})$ of $\wt{R}$ as left $\wt{R}$-module. The map  $\rho:R\rightarrow \mathrm{End}( _{\wt{R}}\wt{R})$ given by right multiplication of $R$ on $\wt{R}$, i.e. by the map $a\mapsto \rho_a:[b\mapsto ba]$ for all $a,b \in R$, is an injective ring homomorphism because for any $a,b,c \in R$ one has  $c*\rho_a(b) = c\psi b a = (c*b)a = \rho_a(c*b)$ showing that $\rho_a \in \mathrm{End}(_{\wt{R}}\wt{R})$. Moreover, $\rho$ is injective as $R$ is unital and hence $\rho_a =0$ implies  $a=1a=\rho_a(1)=0$.

\begin{proposition}\label{endomorphismring1}
$\rho: R\rightarrow  \mathrm{End}(_{\wt{R}}\wt{R})$ is an isomorphism whenever  $\wt{R}$ is idempotent or if $\wt{R}$ contains a central non-zero divisor.
\end{proposition}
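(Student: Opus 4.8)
The homomorphism $\rho$ has already been shown to be injective, so the whole content is \emph{surjectivity}: given $f\in\mathrm{End}(_{\wt{R}}\wt{R})$, I must produce $a\in R$ with $f=\rho_a$, that is, $f(c)=ca$ for all $c\in R$. Recall that $f$ being left $\wt{R}$-linear means precisely $f(x\psi y)=x\psi f(y)$ for all $x,y\in R$. Since $\rho_a(1)=a$, the only possible candidate is $a:=f(1)$ (in the idempotent case an explicit formula for it will drop out), and the plan is to verify $f=\rho_a$ separately under each of the two hypotheses.

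For the idempotent case I would use that $\wt{R}*\wt{R}=R\psi R$ as additive subgroups, so idempotence of $\wt{R}$ says $R\psi R=R$; as $R$ is unital this gives an expression $1=\sum_i a_i\psi b_i$ with $a_i,b_i\in R$. Then for any $c\in R$ one writes $c=\sum_i (ca_i)\psi b_i=\sum_i (ca_i)*b_i$, applies $f$, and uses linearity to move $f$ onto the right factor:
$$f(c)=\sum_i (ca_i)*f(b_i)=\sum_i ca_i\psi f(b_i)=c\Big(\sum_i a_i\psi f(b_i)\Big).$$
Thus $f=\rho_a$ with $a=\sum_i a_i\psi f(b_i)$. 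This case is a routine computation.

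The substantial case, and the main obstacle, is when $\wt{R}$ contains a central non-zero divisor $d$; here centrality reads $d\psi x=x\psi d$ for all $x\in R$, and the non-zero divisor hypothesis means $d\psi x=0$ forces $x=0$. First I would extract the key relation $d\psi f(y)=y\psi f(d)$, valid for all $y\in R$: indeed $f(d*y)=d\psi f(y)$ by linearity, while centrality rewrites $d*y=y*d$, whence $f(d*y)=f(y*d)=y\psi f(d)$. Setting $a:=f(1)$ and specialising $y=1$ (together with centrality applied to $a$) yields $\psi f(d)=a\psi d$. Substituting this back into the key relation gives, for every $c\in R$,
$$d\psi f(c)=c\psi f(d)=ca\psi d=d\psi(ca),$$
where the last equality is centrality applied to the element $ca$. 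Since $d$ is a non-zero divisor, $d\psi\big(f(c)-ca\big)=0$ forces $f(c)=ca$, that is, $f=\rho_a$.

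The delicate point is entirely the noncommutativity of $R$: a naive attempt to ``cancel $d\psi$'' on the left of $d\psi f(c)=c\psi f(d)$ does not reach $ca$ directly, because $d\psi c\neq cd\psi$ in general. The trick that makes it work is to route the computation through $c\psi f(d)=ca\psi d=d\psi(ca)$, invoking centrality twice — once on $a$ (to turn $\psi f(d)$ into $a\psi d$) and once on $ca$ — before finally using that $d$ is a left non-zero divisor. I expect everything else to be bookkeeping.
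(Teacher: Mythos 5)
Your proof is correct and follows essentially the same route as the paper's: in the idempotent case both proofs write $1=\sum_i a_i\psi b_i$ and use left linearity to move $f$ onto the right-hand factor, and in the central non-zero divisor case both combine linearity ($f(d*y)=d*f(y)$) with centrality and then cancel the non-zero divisor. The only cosmetic difference is that you first isolate $\psi f(d)=a\psi d$ and substitute it back, whereas the paper runs the single chain $c*f(b)=\cdots=c*(bf(1))$ directly; the identities invoked are the same.
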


\begin{proof}
Suppose $\wt{R}$ is idempotent, i.e. $R*R = R$. Then there exist $x_i, y_i \in R$ such that  $1=\sum x_i*y_i$. Hence 
$$ f(b) = \sum f(b x_i*y_i) = \sum bx_i * f(y_i) = b f\left(\sum x_i*y_i\right)=b f(1) = \rho_{f(1)}(b)$$
for any  $f\in \mathrm{End}(_{\wt{R}}\wt{R})$ and $b\in R$. Thus, $\rho:R\rightarrow \mathrm{End}(_{\wt{R}}\wt{R})$ is bijective.

Let  $c$ be a central element of $\wt{R}$. Then $c\psi=c*1=1*c=\psi c$ and for all $b\in R$:
$$ c*f(b)=f(c*b)=f(b\psi c) = f(bc\psi)=bc*f(1) = bc\psi f(1)= b*cf(1)=c*bf(1).$$
 Thus, $c*\left( f - \rho_{f(1)}\right)(b)=0$ implies $f=\rho_{f(1)}$ provided $c$ is a non-zero divisor in $\wt{R}$. Also in this case  $\rho$ is bijective.

\end{proof}

\begin{remark}
It is possible that $\rho:R\rightarrow \mathrm{End}( _{\wt{R}}\wt{R})$ is not surjective as the following example shows.
Let $S$ be a unital ring, $R=S\times S$ and $\wt{R}=(R,\psi)$ with $\psi=(1,0)$. For any  $ g\in \mathrm{End}(_{\ZZ}S)$ we define $\tilde{g} \in  \mathrm{End}(_{\wt{R}}\wt{R})$  by  $\tilde{g}(x,y)=(x,g(y))$ for all $(x,y)\in \wt{R}$. Then $\tilde{g}$ is left $\wt{R}$-linear since
$$\tilde{g}( (x,y)*(x',y'))= \tilde{g}(xx',0) = (xx',0)=(x,y)*(x',g(y'))=(x,y)*\tilde{g}(x',y')$$ for all $(x,y),(x',y')\in R$. 
The map $g\mapsto \tilde{g}$ yields an injective ring homomorphism from $ \mathrm{End}(_{\ZZ}S)$  to  $\mathrm{End}(_{\wt{R}}\wt{R})$. Suppose that there exists  $g\in \mathrm{End}(_{\ZZ}S)$ that is not left $S$-linear, then  $\tilde{g}\not\in\mathrm{Im}(\rho)$ because if $\tilde{g}=\rho_{(a,b)}$, then $(0,g(x))=\tilde{g}(0,x)=\rho_{(a,b)}(0,x)=(0,xb).$ Hence $g(x)=xb$ for all $x\in S$ which shows that $g$ would be left $S$-linear. Therefore, we conclude that $\rho:R\rightarrow \mathrm{End}(_{\wt{R}}\wt{R})$ is not surjective whenever $\mathrm{End}(_{\ZZ}S)\neq \mathrm{End}(_{S}S)$.  Moreover, if $S$  is commutative but  $\mathrm{End}(_{\ZZ}S)$ is not, then $R$ cannot be isomorphic to $\mathrm{End}(_{\wt{R}}\wt{R})$ as $R$ is commutative and $\mathrm{End}(_{\wt{R}}\wt{R})$ contains a subring isomorphic to the non-commutative ring $\mathrm{End}(_{\ZZ}S)$.
\end{remark}

A {\em polynomial identity} on a ring $R$ is a polynomial $f$ with integer coefficients in non-commuting variables $x_i$ such that  $f(a_1, \ldots, a_n)=0$ for any substitutions $x_i=a_i\in R$. More precisely, given a unital ring $R$ and an element $a\in R^n$ for some $n\geq 1$, there exist, by the universal property of free algebras, a unique  unital ring homomorphism $\epsilon_a^R: \ZZ\langle x_1, \ldots, x_n\rangle \rightarrow R$ with $\epsilon_a^R(x_i)=a_i$.  It is common to write $f(a_1,\ldots, a_n) := \epsilon_a^R(f)$ for an element $f\in \ZZ\langle x_1, \ldots, x_n\rangle$. An element $f \in \ZZ\langle x_1, \ldots, x_n\rangle$ is an identity for $R$ if $\epsilon_a^R(f)=0$ for all $a\in R^n$, i.e. if  $f(a_1,\ldots, a_n)=0$ for all substitutions $x_i=a_i\in R$ (see  \cite{McConnellRobson}*{13.1.2}). 
In case of a non-unital ring $R$ and $a\in R^n$ for some $n\geq 1$, there exists a non-unital ring homomorphism $\epsilon_a^R: \ZZ^+\langle x_1, \ldots, x_n\rangle \rightarrow R$ with $\epsilon_a^R(x_i)=a_i$, where $\ZZ^+\langle x_1, \ldots, x_n\rangle$ denotes the non-unital free associative $\ZZ$-algebra, i.e. the ideal of $\ZZ\langle x_1, \ldots, x_n\rangle$ generated by the indeterminates $x_i$. An element $f \in \ZZ^+\langle x_1, \ldots, x_n\rangle$  is an identity for $R$ if $\epsilon_a^R(f)=0$ for all $a\in R^n$, i.e. if all substitutions $f(a_1,\ldots, a_n)$ are zero, for $a_1, \ldots, a_n\in R$ (see  \cite{DrenskyFormanek}*{1.2.3}). 

Such an element $f$, either in $\ZZ\langle x_1, \ldots, x_n\rangle$  or $\ZZ^+\langle x_1, \ldots, x_n\rangle$ depending on $R$ being unital or not, is called a {\em polynomial identity} for $R$. A polynomial identity is called {\em monic} if at least one of the words of highest degree in the support of $f$ has coefficient $1$. A ring $R$ is called a {\em polynomial identity ring} ({\em PI-ring}) if $R$ satisfies some monic polynomial. 

From   \cite{McConnellRobson}*{13.1.7(iv)} it is known, that if $N$ is a nilpotent ideal of a ring $R$ such that $R/N$ is a PI-ring, then $R$ is a PI-ring. Considering the map $\varphi: \wt{R}\rightarrow R$ for some unital ring $R$, $\psi \in R$ and  $\wt{R}=(R,\psi)$, we can therefore conclude that $\wt{R}$ is a (not necessarily unital) PI-ring, if $R$ is PI. 

\begin{lemma}\label{lemma_PIRings}
If $R$ satisfies the monic polynomial identity $f$, then $\wt{R}$ satisfies the monic polynomial identity $f^2$.
 \end{lemma}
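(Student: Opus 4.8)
The plan is to show that every evaluation of $f$ on $\wt{R}$ lands in the square-zero ideal $\mathrm{Ker}(\varphi)=\lann_R(\psi)$, and then to square. Since $\wt{R}$ need not be unital, I first record that one may assume $f\in\ZZ^+\langle x_1,\ldots,x_n\rangle$, i.e. $f$ has no constant term: if $f$ had constant term $f_0\in\ZZ$, then evaluating the identity $f$ at the zero tuple in the unital ring $R$ forces $f_0\cdot 1_R=0$, so $f-f_0$ is again a monic identity for $R$ of the same degree and with vanishing constant term. With $f\in\ZZ^+\langle x_1,\ldots,x_n\rangle$ the element $f^2$ again lies in $\ZZ^+\langle x_1,\ldots,x_n\rangle$, so that $f^2$ is a legitimate candidate identity for the possibly non-unital ring $\wt{R}$.

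The key step exploits that $\varphi:\wt{R}\to R$, $a\mapsto a\psi$, is a ring homomorphism. Fix $a=(a_1,\ldots,a_n)\in\wt{R}^{\,n}=R^n$. Because $\varphi\circ\epsilon_a^{\wt{R}}$ and $\epsilon_{a\psi}^R$ (with $a\psi:=(a_1\psi,\ldots,a_n\psi)$) are non-unital ring homomorphisms $\ZZ^+\langle x_1,\ldots,x_n\rangle\to R$ agreeing on the generators $x_i$, they coincide; concretely, on a word $w=x_{i_1}\cdots x_{i_k}$ one checks the telescoping identity $\epsilon_a^{\wt{R}}(w)\,\psi=(a_{i_1}\psi)(a_{i_2}\psi)\cdots(a_{i_k}\psi)=\epsilon_{a\psi}^R(w)$. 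Applying this to $f$ and using that $f$ is an identity for $R$ gives $\varphi(\epsilon_a^{\wt{R}}(f))=\epsilon_{a\psi}^R(f)=0$, i.e. $\epsilon_a^{\wt{R}}(f)\in\mathrm{Ker}(\varphi)=\lann_R(\psi)$. Consequently $\epsilon_a^{\wt{R}}(f^2)=\epsilon_a^{\wt{R}}(f)*\epsilon_a^{\wt{R}}(f)=\epsilon_a^{\wt{R}}(f)\,\psi\,\epsilon_a^{\wt{R}}(f)=0$, since $\mathrm{Ker}(\varphi)\,\psi=0$. As $a$ was arbitrary, $f^2$ is an identity for $\wt{R}$. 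This is exactly the mechanism behind the cited fact \cite{McConnellRobson}*{13.1.7(iv)}: the identity $f$ holds on $\wt{R}/\mathrm{Ker}(\varphi)\cong\mathrm{Im}(\varphi)\subseteq R$, and the kernel is square-zero, so squaring $f$ annihilates the error term.

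It remains to see that $f^2$ is monic. Write $d=\deg f$ and let $w_0$ be a word of degree $d$ in the support of $f$ with coefficient $1$. Every word of degree $2d$ occurring in $f^2=f\cdot f$ is a product of two degree-$d$ words of $f$, and since $w_0w_0$ admits a unique factorisation into two words of length $d$ (namely $w_0\cdot w_0$), its coefficient in $f^2$ equals $1\cdot 1=1$. Thus $f^2$ is a monic identity for $\wt{R}$, as claimed. I expect no serious obstacle here: the only points requiring care are the non-unital bookkeeping, that is, the reduction to $f\in\ZZ^+\langle x_1,\ldots,x_n\rangle$ needed so that $\varphi\circ\epsilon_a^{\wt{R}}=\epsilon_{a\psi}^R$ is valid on $f$, and the verification that squaring preserves monicity; both are routine.
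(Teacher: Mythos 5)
Your proof is correct and takes essentially the same route as the paper's: both push an arbitrary evaluation of $f$ through the ring homomorphism $\varphi:\wt{R}\to R$ to conclude $\epsilon_a^{\wt{R}}(f)\in\mathrm{Ker}(\varphi)$, and then use that this kernel is square-zero to kill $\epsilon_a^{\wt{R}}(f^2)$. Your two extra points of bookkeeping --- reducing to $f\in\ZZ^+\langle x_1,\ldots,x_n\rangle$ so that evaluation in the non-unital ring $\wt{R}$ is defined, and checking via unique factorisation of $w_0w_0$ that $f^2$ is monic --- are details the paper leaves implicit, and they are handled correctly.
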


\begin{proof}
Define $\varphi_n: \wt{R}^n \rightarrow R^n$ as $\varphi_n(a)=(\varphi(a_1),\ldots, \varphi(a_n))$ for all  $a=(a_1, \ldots, a_n) \in  \wt{R}^n$.  Then $\varphi\circ \epsilon_a^{\wt{R}} = \epsilon_{\varphi_n(a)}^R$ holds because $\varphi ( \epsilon_a^{\wt{R}}(x_i)) = \varphi(a_i)= \epsilon_{\varphi_n(a)}^R(x_i)$ for all $i$ (and the fact that $\varphi, \epsilon_a^{\wt{R}}$ and $\epsilon_{\varphi_n(a)}^R$ are ring homomorphisms).
Suppose $R$ satisfies a monic polynomial $f\in \ZZ\langle x_1, \ldots, x_n\rangle$, then 
$\varphi\left(\epsilon_a^{\wt{R}}(f)\right)=\epsilon_{\varphi_n(a)}^R(f)=0$, i.e. $\epsilon_a^{\wt{R}}(f)\in \mathrm{Ker}(\varphi)$ for all $a\in \wt{R}^n$.
Hence  $\epsilon_a^{\wt{R}}(f^2) = \epsilon_a^{\wt{R}}(f)^2=0$ for all $a\in \wt{R}^n$ as $\mathrm{Ker}(\varphi)^2=0$ and as $\epsilon_a^{\wt{R}}$ is a ring homomorphism.  Thus $\wt{R}$ satisfies $f^2$, which is monic.
\end{proof}
 
The standard polynomial identities are defined for all $n>1$ as 
$$s_n = \sum_{\sigma \in S_n} \mathrm{sgn}(\sigma) x_{\sigma(1)} x_{\sigma(2)} \cdots x_{\sigma(n)}.$$ 
A ring is commutative if and only if it satisfies $s_2=x_1x_2-x_2x_1$. The Amitsur-Levitzki Theorem \cite{McConnellRobson}*{13.3.3(ii)} states that the ring of $n\times n$ matrices $M_n(B)$ over a commutative ring $B$ satisfies the standard identity $s_{2n}$. 

Suppose we are given a ring $B$, $n\geq 1$ and $\psi \in M_n(B)$. The ring $\wt{M_n(B)}=(M_n(B),\psi)$ is called a {\em generalised matrix ring} over $B$ (see \cite{Brown1955}).

\begin{proposition}\label{GeneralisedMatrixRings} Let $B$ be a commutative $k$-algebra over a commutative Noetherian ring $k$, $n>0$ and $\psi \in M_n(B)$. Set $J:=\wt{M_n(B)}=(M_n(B),\psi)$ and let $I=\langle \psi_{ij} \mid 1\leq i,j\leq n\rangle$ be the ideal of $B$ generated by the entries of $\psi$.
 \begin{enumerate}
 	\item $J$ satisfies the monic polynomial identity  $s_{2n}^2$.
 	\item  $J$ is a semiprime  ring if and only if $B$ is reduced and $\mathrm{det}(\psi)$ is not a zero divisor in $B$.
 	\item The adjoint matrix $\psi^+$ of $\psi$ is a central element in $J$. 
 	\item If $\mathrm{det}(\psi)$ is not a zero divisor in $B$, then $ \mathrm{End}( _JJ) \simeq M_n(B)$.
 	\item $J*J =\sum_{i,j=1}^n J*E_{ij}=M_n(I)$.
 	\item $J$ is idempotent if and only if $I=B$. In this case 
	\begin{enumerate}
	  \item[(i)] $J$ is a finitely generated left $J$-module;
	  \item[(ii)] $\mathrm{End}( _JJ)\simeq M_n(B)$ and
	  \item[(iii)] $\mathrm{End}( _J\Delta) \simeq B$, where $\Delta = \bigoplus_{j=1}^n BE_{j1}.$
	  \end{enumerate}
 	\item $J$ is generated as a left ideal over itself by a (central) idempotent if and only if $J$ is a cyclic left $J$-module if and only if $\mathrm{det}(\psi)$ is invertible in $B$.
 \end{enumerate}
\end{proposition}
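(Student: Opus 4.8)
The plan is to treat the seven assertions in order, leaning on the swich-algebra machinery already established (Lemmas \ref{Lemma_semiprime} and \ref{lemma_PIRings} and Proposition \ref{endomorphismring1}) and doing by hand only the genuinely new matrix computations. For (1), the Amitsur--Levitzki theorem says $M_n(B)$ satisfies the monic standard identity $s_{2n}$, and since $J=(M_n(B),\psi)$ is a swich algebra, Lemma \ref{lemma_PIRings} immediately gives that $J$ satisfies the monic identity $s_{2n}^2$. For (2) I would combine Lemma \ref{Lemma_semiprime} with two standard facts: $M_n(B)$ is semiprime exactly when $B$ is (Morita invariance of semiprimeness), which for commutative $B$ means $B$ reduced; and, by McCoy's theorem, a square matrix over a commutative ring is a left zero divisor in $M_n(B)$ iff there is a nonzero column vector in its kernel iff $\mathrm{det}(\psi)$ is a zero divisor in $B$ (the right-hand case follows by transposing, using $\mathrm{det}(\psi^T)=\mathrm{det}(\psi)$). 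Thus $J$ is semiprime iff $B$ is reduced and $\mathrm{det}(\psi)$ is a non-zero-divisor. Assertion (3) is a one-line computation: from $\psi\psi^+=\psi^+\psi=\mathrm{det}(\psi)\,\mathrm{Id}$ one gets, for every $a\in J$, that $a*\psi^+=a\psi\psi^+=\mathrm{det}(\psi)a=\psi^+\psi a=\psi^+*a$, so $\psi^+$ is central in $J$.

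For (4) I feed (3) into Proposition \ref{endomorphismring1}: the central element $\psi^+$ acts on $J$ by $\psi^+*a=a*\psi^+=\mathrm{det}(\psi)a$, so when $\mathrm{det}(\psi)$ is a non-zero-divisor in $B$, $\psi^+$ is a central non-zero-divisor in $J$, whence $\rho\colon M_n(B)\to\mathrm{End}(_JJ)$ is an isomorphism. The core of the remaining items is (5). The inclusion $J*J=M_n(B)\psi M_n(B)\subseteq M_n(I)$ is clear, since every entry of $a\psi b$ is a $B$-combination of the $\psi_{rs}\in I$. For the reverse inclusion I would verify the matrix-unit identity $E_{pi}*E_{jq}=E_{pi}\psi E_{jq}=\psi_{ij}E_{pq}$; since $I$ is generated by the $\psi_{ij}$, this places every generator $c E_{pq}$ of $M_n(I)$ into $J*J$, and indeed into some $J*E_{jq}$, yielding $J*J=\sum_{i,j}J*E_{ij}=M_n(I)$. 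Then (6) is immediate: $J$ is idempotent iff $M_n(I)=M_n(B)$ iff $I=B$. In that case $J=\sum_{i,j}J*E_{ij}$ exhibits the finitely many $E_{ij}$ as generators, proving (6)(i), while (6)(ii) is precisely the idempotent case of Proposition \ref{endomorphismring1}.

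The least routine point, and the one I expect to be the main obstacle, is (6)(iii). A direct description of the $J$-linear endomorphisms of $\Delta$ is awkward because $J$ acts on $\Delta$ through the twist by $\psi$, so $J$ has no identity acting as the identity on $\Delta$. Instead I would realise $\Delta$ as the image of an idempotent of $\mathrm{End}(_JJ)$ and pass to a corner. One checks that $\Delta=\bigoplus_j BE_{j1}=JE_{11}$ is exactly the image of $\rho_{E_{11}}\in\mathrm{End}(_JJ)$, and $\rho_{E_{11}}$ is idempotent because $E_{11}^2=E_{11}$; hence $\Delta$ is a direct summand of $_JJ$ and $\mathrm{End}(_J\Delta)\simeq \rho_{E_{11}}\,\mathrm{End}(_JJ)\,\rho_{E_{11}}$. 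Transporting this corner through the isomorphism $\rho\colon M_n(B)\xrightarrow{\sim}\mathrm{End}(_JJ)$ of (6)(ii) identifies it with $E_{11}M_n(B)E_{11}=BE_{11}\simeq B$, which is (6)(iii). The one thing to check carefully is the bookkeeping of composition in $\mathrm{End}(_JJ)$ (right multiplications compose contravariantly), but this is harmless since the resulting corner $BE_{11}\cong B$ is commutative and so agrees with its own opposite.

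Finally, for (7) I read ``$J$ is a cyclic left $J$-module'' as $J=J*x$ for some $x$, i.e. generation by the ring action alone. Then $J*x=M_n(B)\psi x=M_n(B)(\psi x)$ is the ordinary left ideal generated by the single matrix $\psi x$, so $J=J*x$ holds iff $\psi x$ has a left inverse, iff the square matrix $\psi x$ is invertible over the commutative ring $B$, iff $\mathrm{det}(\psi)\mathrm{det}(x)$ is a unit, iff $\mathrm{det}(\psi)$ is invertible; this gives the equivalence of cyclicity with invertibility of $\mathrm{det}(\psi)$. For idempotent generation: if $\mathrm{det}(\psi)$ is invertible then $\psi$ is invertible and $g:=\psi^{-1}$ satisfies $a*g=a\psi\psi^{-1}=a=g*a$, so $g$ is a central idempotent (in fact the identity of $J$) with $J=J*g$; conversely, generation by an idempotent is a special case of cyclicity, closing the loop. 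The subtle point here, which I would flag explicitly, is the meaning of ``cyclic'' in the non-unital setting: the degenerate case $\psi=0$ shows that allowing an extra additive generator would already break the equivalence, so the convention $J=J*x$ must be fixed from the outset.
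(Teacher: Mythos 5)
Your proof is correct, and for items (1)--(5), (6)(i)--(ii) and (7) it follows essentially the same path as the paper: Amitsur--Levitzki plus Lemma \ref{lemma_PIRings} for (1); Lemma \ref{Lemma_semiprime} plus the zero-divisor/determinant criterion for (2) (the paper cites Brown's Theorem 9.1 where you invoke McCoy -- the same fact); the identity $\psi^+\psi=\mathrm{det}(\psi)\mathbb{I}_n=\psi\psi^+$ for (3) and (4); the matrix-unit computation $E_{ki}*E_{jl}=\psi_{ij}E_{kl}$ for (5); and a determinant argument for (7), where your generator $\psi^{-1}$ is exactly the paper's $e=\mathrm{det}(\psi)^{-1}\psi^+$, and your reading of ``cyclic'' as $J=J*x$ matches the convention the paper's proof implicitly uses. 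The one place you genuinely diverge is (6)(iii), which you correctly identified as the least routine point. The paper argues by hand: it defines $\phi_b(cE_{i1})=cbE_{i1}$, checks $\phi:B\to\mathrm{End}(_J\Delta)$ is an injective ring map, and proves surjectivity by extending $f\in\mathrm{End}(_J\Delta)$ to $f'=\epsilon_\Delta\circ f\circ\pi_\Delta\in\mathrm{End}(_JJ)$, writing $f'=\rho(M)$ via (6)(ii), and showing the constraint $f(\Delta)\subseteq\Delta$ forces $b_{1j}=0$ for $j\neq 1$, so $f=\phi_{b_{11}}$. You instead observe $\Delta=\mathrm{im}(\rho_{E_{11}})$ with $\rho_{E_{11}}$ an idempotent of $\mathrm{End}(_JJ)$, and identify $\mathrm{End}(_J\Delta)$ with the Peirce corner $\rho_{E_{11}}\,\mathrm{End}(_JJ)\,\rho_{E_{11}}\simeq E_{11}M_n(B)E_{11}=BE_{11}\simeq B$. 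This is a valid repackaging -- in effect the paper's computation is your corner argument unrolled (its $f'=\epsilon_\Delta f\pi_\Delta$ is exactly the corner element, and the vanishing of $b_{1j}$, $j\neq 1$, is the statement that only the $(1,1)$-corner survives) -- and your version buys brevity and avoids the explicit coefficient bookkeeping, at the cost of needing the standard lemma $\mathrm{End}(e(M))\simeq e\,\mathrm{End}(M)\,e$ and the opposite-ring caveat for $\rho$, which you rightly flag and which is harmless here since the corner $BE_{11}$ is commutative. Your explicit warning in (7) that adding an extra additive generator to the notion of cyclicity would break the equivalence (e.g.\ for $\psi=0$) is a worthwhile clarification that the paper leaves implicit.
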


\begin{proof} 
	(1) $M_n(B)$ satisfies $s_{2n}$ by the Amitsur-Levitzki Theorem and, by Lemma \ref{lemma_PIRings}, $J=\wt{M_n(B)}$ satisfies $s_{2n}^2$.
	
	(2) By Lemma \ref{Lemma_semiprime}, $J$ is semiprime if and only if $M_n(B)$ is semiprime and $\psi$ is neither a left nor right zero-divisor in $M_n(B)$. By \cite{Brown}*{Theorem 9.1}, $\psi$ is a left or right zero divisor in $M_n(B)$ if and only if the determinant $\mathrm{det}(\psi)$ is a zero divisor in $B$. Moreover,  $M_n(B)$ is semiprime if and only if $B$ is reduced since the ideals of $M_n(B)$ are precisely the ideals of the form $M_n(I)$ for an ideal $I$ of $B$.
	
	(3) The adjoint matrix $\psi^+$ of $\psi$ satisfies $\psi^+\psi = \mathrm{det}(\psi)\mathbb{I}_n = \psi\psi^+$. Hence $\psi^+ * a = \mathrm{det}(\psi)a = a*\psi^+$ for all $a\in \wt{M_n(B)}$.
	
	(4) If $\mathrm{det}(\psi)$ is not a zero divisor in $B$, then $\psi^+$ is not a zero divisor in $J$ since $a*\psi^+ = \mathrm{det}(\psi) a \neq 0$ for any non-zero $a\in J$. Hence $\psi^+$ is a central non-zero divisor in $J$ and $M_n(B)\simeq \mathrm{End} (_JJ)$ by Proposition \ref{endomorphismring1}. 
	
	(5) Let $\psi_{ij}$ be an entry of $\psi$. Then $\psi_{ij}E_{kl}= E_{ki}*E_{jl}$ for any $i,j,k,l$. Hence $M_n(I)\subseteq \sum_{i,j=1}^n J*E_{ij} \subseteq J*J$. 
	On the other hand $ \left(bE_{ij}\right)*\left(b'E_{kl}\right) = bb'\psi_{jk}E_{il} \in M_n(I)$ for any $b,b'\in B$ and $1\leq i,j,k,l\leq n$, shows $J*J\subseteq M_n(I)$.
	
	(6) By (5), $M_n(I)=J*J\subseteq J=M_n(B)$. Thus $J$ is idempotent if and only if $I=B$. Moreover, $J=J*J=\sum J*E_{ij}$  shows that $J$ is finitely generated as left $J$-module. Proposition  \ref{endomorphismring1}  shows that the right multiplication $\rho: M_n(B) \rightarrow \mathrm{End}( _JJ)$ is an isomorphism of rings.
	
	For any $1\leq j \leq n$, define the left $J$-submodules $\Delta_j = \bigoplus_{i=1}^n BE_{ij}$, which yields the decomposition  $J=\Delta_1\oplus \cdots \oplus \Delta_n$. Set $\Delta=\Delta_1$. 
	For any $b\in B$, the map $\phi_b: \Delta \rightarrow \Delta$ given by $\phi_b(cE_{i1})=cbE_{i1}$, for $cE_{i1}\in \Delta$, is left $J$-linear since $$\phi_b(aE_{ij}*cE_{k1}) = \phi_b(a\psi_{jk}cE_{i1}) = abc\psi_{jk}E_{i1}=aE_{ij}*bcE_{k1}=aE_{ij}\phi_b(cE_{k1})$$
for all $a,b,c\in B$ and $1\leq i,j,k\leq n$. Moreover, $\phi_b\circ \phi_{b'}=\phi_{bb'}$ for any $b, b' \in B$ and $\phi_1=id_\Delta$ shows that $\phi: B \rightarrow \mathrm{End}( _J\Delta)$ is an injective ring homomorphism.
	
	Let $f$ be any $J$-linear endomorphism of $\Delta$. 
Let $\pi_\Delta:J \rightarrow \Delta$ be the canonical projection and $\epsilon_\Delta: \Delta\rightarrow J$ be the canonical embedding. Then $f'=\epsilon_\Delta\circ f\circ \pi_\Delta$ is an endomorphism of $J$.
	Using the isomorphism $\rho$ from above, there exists a matrix $M=\sum_{i,j} b_{ij} E_{ij} \in M_n(B)$ such that $f'=\rho(M)$. Hence for any  $a=c_1E_{11} + \cdots c_nE_{n1}\in \Delta$,
	$$ f'(a)=f(a)=\sum_{i,j,l=1}^n c_lb_{ij} E_{l1}E_{ij} = \sum_{j,l=1}^n c_lb_{1j} E_{lj}.$$
	Since $f(a)\in \Delta$, the coefficients of $E_{lj}$ must be zero for $j\neq 1$. Since the coefficients $c_l$ were arbitrary, $b_{1j}=0$ for all $j\neq 1$. Hence
	$f(a)=\sum_{l=1}^n c_lb_{11}E_{l1} = \phi_{b_{11}}(a)$, i.e. $\phi: B \rightarrow \mathrm{End}( _J\Delta)$ is surjective and hence an isomorphism.

	(7) If $J$ is generated as a left ideal by an idempotent, then $J$ is a cyclic left $J$-module. Suppose there exists an element  $e\in J$ such that $J=J*e$. Then $1=f\psi e$ for some $f\in J$. Hence $1=\mathrm{det}(\psi)\mathrm{det}(fe)$, shows that $\mathrm{det}(\psi)$ is invertible in $B$. 

	On the other hand assume $d=\mathrm{det}(\psi)$ is invertible in $B$. Set $e=d^{-1}\psi^+$. Then 
	$$a*e = d^{-1}a\psi\psi^+ = a = (d^{-1}\psi^+)\psi a = e*a$$	
	for all $a\in J$. Thus $e$ is a central idempotent that generates $J$ as left and right ideal.
\end{proof}

\begin{corollary}\label{centralelementsB} Let $B$, $J$ and $\psi$ be as above. Suppose that $\mathrm{det}(\psi)$ is not a zero divisor in $B$. 
Then $B\psi^+ = \{b\psi^+  \mid b\in B\} $ is a central subring of $J$ isomorphic to $\wt{B}=(B,\mathrm{det}(\psi))$.
\end{corollary}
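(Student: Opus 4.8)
The plan is to write down an explicit ring homomorphism $\Theta\colon \wt{B}\to J$ whose image is $B\psi^+$, and then read off all the assertions from its properties; the non-zero-divisor hypothesis will be needed only to make $\Theta$ injective. I would set $\Theta(b)=b\psi^+$, so that $\mathrm{Im}(\Theta)=B\psi^+$ by construction and $\Theta$ is visibly additive. The one computation that matters is how the deformed product $*$ acts on such elements: invoking the identity $\psi^+\psi=\mathrm{det}(\psi)\mathbb{I}_n$ from Proposition \ref{GeneralisedMatrixRings}(3) together with the fact that elements of $B$, viewed as scalar matrices, commute with all matrices, I would obtain
$$(b\psi^+)*(b'\psi^+)=b\psi^+\,\psi\, b'\psi^+=bb'\,\mathrm{det}(\psi)\,\psi^+$$
for all $b,b'\in B$. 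Since the product in $\wt{B}=(B,\mathrm{det}(\psi))$ is $b*b'=bb'\,\mathrm{det}(\psi)$, the right-hand side is exactly $\Theta(b*b')$. This single identity shows at once that $B\psi^+$ is closed under $*$, hence is a subring of $J$, and that $\Theta$ is a ring homomorphism of $\wt{B}$ onto $B\psi^+$.

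Centrality of $B\psi^+$ I would deduce directly from Proposition \ref{GeneralisedMatrixRings}(3): since $\psi^+$ is central in $J$, for every $a\in J$ and $b\in B$ one has $(b\psi^+)*a=b(\psi^+*a)=b(a*\psi^+)=a*(b\psi^+)$, again using that $b$ commutes with every matrix. Equivalently, both sides evaluate to $b\,\mathrm{det}(\psi)\,a$. Thus $B\psi^+$ is a central subring of $J$.

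It remains to prove that $\Theta$ is injective, which is the only point where the hypothesis enters. Suppose $b\psi^+=0$; multiplying the matrix $b\psi^+$ on the right by $\psi$ and using $\psi^+\psi=\mathrm{det}(\psi)\mathbb{I}_n$ gives $b\,\mathrm{det}(\psi)=0$, so that $b=0$ because $\mathrm{det}(\psi)$ is not a zero divisor in $B$. Combined with surjectivity onto $B\psi^+$, this makes $\Theta$ the desired isomorphism $\wt{B}\simeq B\psi^+$. I do not expect any genuine obstacle here: the result is a direct consequence of the adjugate identity together with the non-zero-divisor condition, the latter being precisely what guarantees that scaling by $\psi^+$ loses no information and therefore embeds the swich algebra $\wt{B}$ faithfully inside $J$.
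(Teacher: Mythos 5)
Your proposal is correct and follows essentially the same argument as the paper: the same injectivity computation ($b\psi^+=0 \Rightarrow b\,\mathrm{det}(\psi)=b\psi^+\psi=0 \Rightarrow b=0$) and the same product identity $(b\psi^+)*(b'\psi^+)=bb'\,\mathrm{det}(\psi)\,\psi^+$, with centrality drawn from Proposition \ref{GeneralisedMatrixRings}(3) just as the paper does. The only difference is presentational: you package the map as an explicit homomorphism $\Theta$ and spell out the centrality check, which the paper leaves implicit.
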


\begin{proof}
The map $B\longrightarrow B\psi^+$ sending $b$ to $b\psi^+$ is injective, since for $b\psi^+=0$ one has $b\mathrm{det}(\psi)=b\psi^+\psi=0$ and hence $b=0$. The product of two elements 
$b\psi^+$ and $b'\psi^+$ is given by
$bb'\mathrm{det}(\psi)\psi^+$ showing that $B\psi^+ \simeq (B,\mathrm{det}(\psi))$.
\end{proof}

\begin{example}\label{idempotent_zero_determinant} Let $B$ be any commutative $k$-algebra, $R=M_2(B)$,
 $\psi=E_{11}$  and $J=\wt{R}=(M_2(B),\psi)$. The ideal generated by the entries of $\psi$ is $B$. Hence by Proposition \ref{GeneralisedMatrixRings}(6), the ideal $J$ is idempotent, but not generated by an idempotent element as $\mathrm{det}(\psi)=0$ is not invertible in $B$.  Precisely, if $J$ were a cyclic left $J$-module generated by some element $e\in J$, then $\mathbb{I}_2 = f*e=f \psi e$ for some element $f\in J$. Thus $1=\mathrm{det}(f)\mathrm{det}(\psi)\mathrm{det}(e)$ would contradict $\mathrm{det}(\psi)=0$. Hence $J$ cannot be a cyclic left $J$-module. However, $J=J*E_{11} \oplus J*E_{12}$ as a left $J$-module.
\end{example}

\section{Affine cell ideals}

For the rest of the paper, we will assume that $k$ is a commutative Noetherian ring.
Recall the definition of an affine cell ideal from \cite{KoenigXi}*{Definition 2.1}.

\begin{definition} Let  $A$ be a unitary $k$-algebra with $k$-involution $i$. A two-sided ideal $J$ of $A$ is
called an {\em affine cell ideal} if and only if the following data are given and the following conditions are satisfied:
\begin{enumerate}
	\item The ideal $J$ is fixed by $i$: $i(J)=J$.
	\item There exists a free $k$-module $V$ of finite rank and an affine commutative $k$-algebra $B$ with identity and with a $k$-involution $\sigma$ such that $\Delta:= V \otimes_k B$ is an $A-B$-bimodule, where the right $B$-module structure is induced by that of the right regular $B$-module $B_B$.
	\item There is an $A-A$-bimodule isomorphism $\alpha : J \rightarrow \Delta\otimes_B \Delta'$, where $\Delta'=B\otimes_k V$ is a $B-A$-bimodule with the left $B$-structure induced by $_BB$ and with the right $A$-structure defined  via $i$, that is, $(b\otimes v)a:=\tau(i(a)(v\otimes b))$ for $a\in A$, $b\in B$ and $v\in V$, where $\tau:\Delta \rightarrow \Delta'$ denotes the flip map, such that the following diagram is commutative:
	\[\begin{CD} 
		J @>\alpha>> \Delta\otimes_B \Delta' \\
		@ViVV @VV{v_1\otimes b_1 \otimes_B b_2 \otimes v_2 \rightarrow v_2\otimes \sigma(b_2)\otimes_B \sigma(b_1) \otimes v_1}V\\
		J @>>\alpha> \Delta \otimes_B \Delta'
	\end{CD}\]
\end{enumerate}
\end{definition}
The module $\Delta$ is called the {\em cell lattice} of $J$. 
The module $\Delta\otimes_B \Delta'$ can be identified with $V\otimes B \otimes V$ and property (3) implies that for all $v,v' \in V$ and $b\in B$:
$$ \alpha( i (\alpha^{-1}(v\otimes b \otimes v'))) = v' \otimes \sigma(b) \otimes v.$$

The multiplication on $J$ leads to a multiplication $\bullet$ on $V\otimes B \otimes V$ as follows:
$$ u\bullet w = \alpha( \alpha^{-1}(u)\alpha^{-1}(w) )$$
for all $u,w \in V \otimes B \otimes V$. With this product, $\alpha$ is a ring isomorphism between $J$ and $V\otimes B \otimes V$. Let $\{v_1, \ldots, v_n\}$ be a basis of $V$, $1\leq i,j,s,t\leq n$, $b,c\in B$ and set
$u=v_i\otimes b \otimes v_j$ and $w=v_s\otimes c \otimes v_t$. 
By \cite{KoenigXi}*{Proposition 2.2} there exists a bilinear form $\psi:V\otimes V \rightarrow B$ such that 
$$u \bullet w = (v_i\otimes b \otimes v_j)\bullet (v_s\otimes c \otimes v_t) = v_i \otimes b\ \psi(v_j, v_s) c \otimes
v_t.$$
We identify the bilinear form $\psi$ with the matrix $\psi=(\psi_{ij}) \in M_n(B)$, where  $\psi_{ij}=\psi(v_i,v_j)$, and
consider its generalised matrix ring $\wt{M_n(B)}=(M_n(B),\psi)$. 
Then 
$$ \alpha': V\otimes B\otimes V \longrightarrow \wt{M_n(B)}, \qquad 
v_i\otimes b\otimes v_j \mapsto bE_{ij}$$
is an isomorphism of algebras, where $E_{ij}$ denotes the matrix units.  Set $\ov{\alpha}=\alpha'\circ\alpha:J\simeq \wt{M_n(B)}$. Recall the map $\varphi:\wt{M_n(B)} \rightarrow M_n(B)$ sending a matrix $M$ to $M\psi$ and set $\ov{\varphi}=\varphi\circ \ov{\alpha}$.
 Analogously, one defines $\overline{\varphi'}=\varphi'\circ\ov\alpha:J \rightarrow M_n(B)$ with $a \mapsto \psi\ov\alpha(a)$.
 
\begin{lemma}\label{annihilators}
$\mathrm{Ker}( \ov{\varphi}) = \lann_J(J)$ and $\mathrm{Ker}(\overline{\varphi'})= \rann_J(J).$
In particular, $ \lann_J(J)=0$ if and only if $\rann_J(J)=0$ if and only if $\mathrm{det}(\psi)$ is not a zero divisor in $B$.
\end{lemma}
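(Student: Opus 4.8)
The plan is to exploit that $\ov{\alpha} = \alpha' \circ \alpha \colon J \to \wt{M_n(B)}$ is a ring isomorphism, so that every annihilator statement about $J$ transports to the generalised matrix ring $\wt{R} = \wt{M_n(B)}$ with $R = M_n(B)$; in particular the $A$-bimodule structure of $J$ plays no role here. Since $\ov{\varphi} = \varphi \circ \ov{\alpha}$ and $\ov{\alpha}$ is bijective, one has $\mathrm{Ker}(\ov{\varphi}) = \ov{\alpha}^{-1}(\mathrm{Ker}(\varphi))$, while $\ov{\alpha}$ being a ring isomorphism carries $\lann_J(J)$ onto $\lann_{\wt{R}}(\wt{R})$. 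Thus the identity $\mathrm{Ker}(\ov{\varphi}) = \lann_J(J)$ reduces to the internal claim $\mathrm{Ker}(\varphi) = \lann_{\wt{R}}(\wt{R})$ inside $\wt{R}$, and symmetrically $\mathrm{Ker}(\varphi') = \rann_{\wt{R}}(\wt{R})$ for the second identity.

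First I would verify these two internal identities directly from the deformed product $a * b = a\psi b$, using that $R$ is unital. An element $a$ lies in $\lann_{\wt{R}}(\wt{R})$ exactly when $a\psi b = 0$ for all $b \in R$; taking $b = 1$ yields $a\psi = 0$, i.e. $a \in \mathrm{Ker}(\varphi)$, and conversely $a\psi = 0$ forces $a\psi b = 0$ for every $b$. The argument for $\rann_{\wt{R}}(\wt{R})$ is the mirror image, reading off $\psi a = 0$ by setting the left factor equal to $1$ and recognising $\mathrm{Ker}(\varphi')$.

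For the final assertion I would chain the equivalences recorded in Section 2 with the zero-divisor criterion used in the proof of Proposition \ref{GeneralisedMatrixRings}(2). By the above, $\lann_J(J) = 0$ iff $\mathrm{Ker}(\varphi) = 0$ iff $\varphi$ is injective, which by Section 2 holds iff $\psi$ is not a right zero divisor in $R$; symmetrically $\rann_J(J) = 0$ iff $\psi$ is not a left zero divisor. Brown's Theorem (\cite{Brown}*{Theorem 9.1}) identifies both one-sided conditions with the single requirement that $\det(\psi)$ be a non-zero divisor in $B$, since over the commutative ring $B$ a zero-divisor determinant produces zero divisors on both sides. Chaining these equivalences gives that $\lann_J(J) = 0$, $\rann_J(J) = 0$, and ``$\det(\psi)$ is not a zero divisor'' are mutually equivalent.

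Given how directly everything follows, I expect no real obstacle; the only point requiring care is to invoke Brown's Theorem in its full strength, namely that a zero-divisor determinant yields zero divisors on \emph{both} sides, in order to collapse the left and right conditions to the single determinant condition.
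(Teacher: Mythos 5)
Your proposal is correct and follows essentially the same route as the paper: both reduce the kernel identities to the observation that plugging in the identity matrix $\mathbb{I}_n$ (in your case directly in $\wt{M_n(B)}$, in the paper via its preimage $e=\ov{\alpha}^{-1}(\mathbb{I}_n)$ in $J$) detects $\mathrm{Ker}(\varphi)$ resp.\ $\mathrm{Ker}(\varphi')$, and both then invoke Brown's Theorem 9.1 to collapse the left and right zero-divisor conditions on $\psi$ into the single condition that $\det(\psi)$ is a non-zero divisor in $B$. Whether one transports the computation to $\wt{M_n(B)}$ through $\ov{\alpha}$, as you do, or works inside $J$, as the paper does, is purely a matter of presentation.
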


\begin{proof}
Let $\mathbb{I}_n$ denote the identity matrix in $M_n(B)$ and set $e=\ov{\alpha}^{-1}(\mathbb{I}_n)$.  Then for any $a\in \lann_J(J)$:
$$ \ov{\varphi}(a)=\ov{\alpha}(a)\psi =\ov{\alpha}(a)\psi \mathbb{I}_n =
\ov{\alpha}(a)*\ov{\alpha}(e)=\ov{\alpha}(ae)=0,$$
Thus $\lann_J(J)\subseteq \mathrm{Ker}( \overline{\varphi})$.  Similarly, if $a\in \mathrm{Ker}( \ov{\varphi})$, then for any $b\in J$: 
$$\ov\alpha(ab)=\ov\alpha(a)*\ov\alpha(b)=\ov\alpha(a)\psi\ov\alpha(b)=\ov\varphi(a)\ov\alpha(b)=0.$$
As $\ov{\alpha}$ is an isomorphism, $ab=0$ for all $b\in J$, i.e. $a\in \lann_J(J)$.

Clearly,  $\mathrm{Ker}( \ov{\varphi}) =0$ if and only if $\psi$ is not a right zero divisor in $M_n(B)$. By \cite{Brown}*{Theorem 9.1},  $\psi$ is not a right zero divisor in $M_n(B)$ if and only if $\mathrm{det}(\psi)$ is not a zero divisor in $B$ which proves the last claim.
\end{proof}

The last Lemma shows that  $\mathrm{Ker}(\overline{\varphi})$ is not only an ideal in $J$ but also an ideal in $A$.

Next we will apply Proposition \ref{endomorphismring1} in order to conclude that  $\mathrm{End}(_AJ) \simeq M_n(B)$ provided $J$ is idempotent or contains a non-zero element that is a central non-zero divisor in $J$.

\begin{theorem}\label{endomorphismring_cellideal}
Let $J$ be an affine cell ideal of $A$ with cell lattice $\Delta$. Suppose one of the following conditions hold:
\begin{enumerate}
\item[(a)] $J$ is an idempotent ideal or 
\item[(b)] $J$ contains  an element that is a central non-zero divisor in the ring $J$.
\end{enumerate}
Then $\mathrm{End}(_AJ) = \mathrm{End}(_JJ)\simeq M_n(B)$ and $\mathrm{End}( _A\Delta) \simeq B$.
\end{theorem}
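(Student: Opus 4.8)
\emph{The plan.} I would reduce the two claimed isomorphisms to their purely ring-theoretic analogues for $\wt{M_n(B)}$, which are already available from Section 2, and then bridge between endomorphisms of $J$ as a left $A$-module and as a module over itself. Fix the ring isomorphism $\ov{\alpha}:J\simeq\wt{M_n(B)}$. Under hypothesis (b) a central non-zero divisor of $J$ forces $\lann_J(J)=\rann_J(J)=0$, so by Lemma \ref{annihilators} $\mathrm{det}(\psi)$ is a non-zero divisor; in either case Proposition \ref{endomorphismring1} then shows that the right-multiplication map $\rho:M_n(B)\to\mathrm{End}(_JJ)$ is an isomorphism (idempotence for (a), a central non-zero divisor for (b)). Re-running the argument of Proposition \ref{GeneralisedMatrixRings}(6)(iii), which uses nothing beyond this isomorphism $\rho$ and the $J$-linear projection onto the first column, gives $\mathrm{End}(_J\Delta)\simeq B$ in both cases. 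The restriction-of-scalars inclusions $\mathrm{End}(_AJ)\subseteq\mathrm{End}(_JJ)$ and $\mathrm{End}(_A\Delta)\subseteq\mathrm{End}(_J\Delta)$ are automatic because $J\subseteq A$, so it remains to prove the reverse inclusions, i.e. that every $J$-linear endomorphism is in fact $A$-linear.

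\emph{The crux.} This reverse inclusion is the step I expect to be the main obstacle: under (b) the ideal $J$ need not be idempotent, so the naive argument that $J^2=J$ allows one to push the $A$-action through a factorisation of each element fails, and one must instead understand the left $A$-module structure of $J$ explicitly. Here I would exploit that the cell lattice $\Delta=V\otimes_k B$ is an $A$-$B$-bimodule whose underlying right $B$-module is free of rank $n$. Hence the left action supplies a ring homomorphism $A\to\mathrm{End}(\Delta_B)\simeq M_n(B)$, and under the identification $J\simeq\Delta\otimes_B\Delta'$ (with $A$ acting through the $\Delta$-factor) this exhibits the left $A$-action on $J\simeq\wt{M_n(B)}$ as left ordinary matrix multiplication $M\mapsto\bar x M$ by matrices $\bar x\in M_n(B)$. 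Since left and right ordinary matrix multiplications commute, every $\rho_a$ with $a\in M_n(B)$ is left $A$-linear; that is, $\mathrm{End}(_JJ)=\rho(M_n(B))\subseteq\mathrm{End}(_AJ)$. Combined with the automatic inclusion this yields $\mathrm{End}(_AJ)=\mathrm{End}(_JJ)\simeq M_n(B)$.

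\emph{The cell lattice.} For $\Delta$ I would argue in the same spirit but more directly. The right $B$-action on the bimodule $\Delta$ commutes with the left $A$-action, so right multiplication by $b\in B$ is already an $A$-endomorphism; this produces an injective ring homomorphism $\phi:B\to\mathrm{End}(_A\Delta)$, which under the identification $\Delta\simeq\bigoplus_i BE_{i1}$ coincides with the map of Proposition \ref{GeneralisedMatrixRings}(6)(iii). As that map is an isomorphism onto $\mathrm{End}(_J\Delta)$ and factors as the composite $B\to\mathrm{End}(_A\Delta)\hookrightarrow\mathrm{End}(_J\Delta)$ of $\phi$ with the inclusion, the inclusion must be onto, whence $\mathrm{End}(_A\Delta)=\mathrm{End}(_J\Delta)\simeq B$. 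The only routine verifications left are the compatibility of the identifications, namely that $\Delta$ corresponds to the first column and that the column decomposition $J=\bigoplus_j\Delta_j$ is simultaneously a decomposition of left $A$- and of left $J$-modules; both follow directly from the isomorphisms $\ov{\alpha}$ and $\alpha'$ fixed at the outset.
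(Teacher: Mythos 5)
Your proposal is correct, and it shares the paper's skeleton (transfer to $\wt{M_n(B)}$ via $\ov\alpha$, apply Proposition \ref{endomorphismring1} to get $\mathrm{End}(_JJ)\simeq M_n(B)$, and re-run the column computation of Proposition \ref{GeneralisedMatrixRings}(6)(iii), which indeed needs nothing beyond bijectivity of $\rho$), but you handle the crux --- that $J$-linear endomorphisms are $A$-linear --- by a genuinely different argument. The paper proves $\mathrm{End}(_AJ)=\mathrm{End}(_JJ)$ element-wise, invoking the hypotheses a second time: under (a) it factors $x=\sum y_iz_i$ and pushes $a\in A$ through, $f(ax)=\sum ay_if(z_i)=af(x)$; under (b) it cancels the central non-zero divisor from $c\left(af(x)-f(ax)\right)=0$. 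You instead exploit the $A$-$B$-bimodule structure of the cell lattice: since $\Delta_B$ is free of rank $n$, the left $A$-action yields a ring homomorphism $\lambda:A\to \mathrm{End}(\Delta_B)\simeq M_n(B)$, and under $\ov\alpha$ the $A$-action on $J$ becomes ordinary left multiplication by $\lambda(a)$, so every right multiplication $\rho_m$ commutes with it and is automatically $A$-linear. This buys something the paper's proof does not make explicit: the inclusions $\rho(M_n(B))\subseteq \mathrm{End}(_AJ)$ and $\phi(B)\subseteq \mathrm{End}(_A\Delta)$ hold for \emph{any} affine cell ideal, with hypotheses (a)/(b) entering only once, through Proposition \ref{endomorphismring1}, to make $\rho$ surjective onto $\mathrm{End}(_JJ)$. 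Conversely, the paper's element-wise argument is shorter and purely ideal-theoretic (it works for any ideal of any ring that is idempotent or contains a left-cancellable element, with no reference to the cell structure), and its treatment of $\Delta$ --- realising $\Delta$ as the first-column $A$-direct summand of $J$ and transferring the equality $\mathrm{End}(_AJ)=\mathrm{End}(_JJ)$ through the projection --- is replaced in your write-up by the cleaner observation that right $B$-multiplication is $A$-linear by the bimodule axioms, so that $B\to\mathrm{End}(_A\Delta)\hookrightarrow\mathrm{End}(_J\Delta)$ composes to the isomorphism of Proposition \ref{GeneralisedMatrixRings}(6)(iii), forcing the inclusion to be onto. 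Both routes are sound; yours localises exactly where conditions (a) and (b) are needed.
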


\begin{proof}
Both  conditions on $J$ pass over to $\wt{M_n(B)}$ via $\ov\alpha$.  Hence by Proposition  \ref{endomorphismring1} the right multiplication  $\rho: M_n(B) \rightarrow \mathrm{End}\left(_{\wt{M_n(B)}}{\wt{M_n(B)}}\right)$ is an isomorphism of $k$-algebras.  Furthermore, the isomorphism $\ov{\alpha}: J \simeq \wt{M_n(B)}$ induces an isomorphism $\gamma: \mathrm{End}(_JJ)\simeq \mathrm{End}\left(_{\wt{M_n(B)}}{\wt{M_n(B)}}\right)$ whose inverse map is defined by $\gamma^{-1}(f)=\ov\alpha^{-1}\circ f\circ\ov\alpha$ for all $f\in \mathrm{End}\left(_{\wt{M_n(B)}}{\wt{M_n(B)}}\right)$. Thus 
$\gamma^{-1}\circ \rho: M_n(B) \rightarrow \mathrm{End}(_JJ)$ is an isomorphism of $k$-algebras sending a matrix $m$ to $\ov\alpha^{-1}\circ \rho_m \circ\ov\alpha$. 

We will show that both conditions on $J$ imply $\mathrm{End}(_AJ)=\mathrm{End}(_JJ)$. 

In case $J$ is idempotent for any $x\in J$,  there exist $y_1,\ldots, y_m, z_1,\ldots, z_m \in J$ such that $x=\sum_{i=1}^m y_iz_i$. Hence for any $f\in \mathrm{End}(_JJ)$ and $a\in A$ we get
$$ f(ax) = \sum_{i=1}^m f(ay_i z_i) = \sum_{i=1}^m a y_i f(z_i) = a f\left(\sum_{i=1}^m y_iz_i\right) = af(x).$$

In case $J$ contains a non-zero  element $c$ that is not a left zero divisor in $J$, we have for any $f\in \mathrm{End}(_JJ)$, $x\in J$ and $a\in A$:
$ c ( af(x)-f(ax) ) = ca f(x) - c f(ax) = f(cax-cax ) = 0.$
Hence $af(x)=f(ax)$.

Let $\wt{\Delta} := \bigoplus_{i=1} B E_{i1}  \subseteq \wt{M_n(B)}$
and $\Delta=V\otimes B$, with $V$ a free $k$-module with basis $\{v_1,\ldots, v_n\}$.
Then $\alpha'^{-1}(\wt{\Delta}) = \Delta \otimes v_1$ is a left $A$-submodule of $\Delta\otimes V$ and as such a direct summand. Hence $\ov\alpha^{-1} ( \wt{\Delta} )$ is a direct summand of $J$ as  left $A$-submodule and isomorphic to $\Delta$.
Since $\mathrm{End}(_JJ)=\mathrm{End}(_AJ)$ and since $\ov\alpha^{-1}(\wt{\Delta})$ is a direct summand of $_AJ$, any left $J$-linear endomorphism of $\ov\alpha^{-1}(\wt{\Delta})$ is left $A$-linear. Thus $$\mathrm{End}(_A\Delta)\simeq  \mathrm{End}(_A\ov\alpha^{-1}(\wt{\Delta})) = \mathrm{End}(_J\ov\alpha^{-1}(\wt{\Delta})).$$
The map $\gamma': \mathrm{End}(_J\ov\alpha^{-1}(\wt{\Delta}))\rightarrow \mathrm{End}( _{\wt{M_n(B)}}{\wt{\Delta}})$,
given by $f\mapsto \ov\alpha\circ f \circ \ov\alpha^{-1}$, is an isomorphism of rings. Hence by Proposition \ref{endomorphismring1}, 
$$\mathrm{End}(_A\Delta)\simeq  \mathrm{End}(_J\ov\alpha^{-1}(\wt{\Delta})) \simeq \mathrm{End}\left(_{\wt{M_n(B)}}{\wt{\Delta}}\right) \simeq B.$$
\end{proof}

\begin{remark} 
Theorem \ref{endomorphismring_cellideal}(a) shows that if $J$ is an idempotent ideal, then $B\simeq \mathrm{End}(_A\Delta)$. This isomorphism has been obtained in \cite{KoenigXi}*{Theorem 4.3(2)} under the additional assumption that the radical of $B$ is zero. Proposition \ref{endomorphismring_cellideal} removes this assumption and moreover generalises \cite{KoenigXi}*{Theorem 4.3(2)} by showing that the same conclusion holds if $J$ contains a central non-zero divisor in $J$. Example \ref{example_affineTL}  will show that there exists an affine cell ideal in an affine Temperley-Lieb algebra, that is not idempotent but  contains a central non-zero divisor.
\end{remark}

In the next proposition we will find some sufficient conditions for an affine cell ideal to be finitely generated as left ideal.

\begin{proposition}\label{Proposition_finitelygenerated}
	Let $J$ be an affine cell ideal of $A$ with cell data $B$, $n$ and $\psi$ as above. For any $1\leq i,j\leq n$, let $e_{ij}=\ov{\alpha}^{-1}(E_{ij})\in J$ where $E_{ij}$ are the matrix units of $M_n(B)$. Denote by $I$ the ideal of $B$ generated by the entries of $\psi$ and let $\{ b_\lambda + I \mid \lambda \in \Lambda\}$ be a generating set of $B/I$ as $k$-module.
	\begin{enumerate}
		\item $J^2 = \sum_{i,j=1}^n Je_{ij} \subseteq \sum_{i,j=1}^n Ae_{ij} \subseteq J=J^2 + \sum_{i,j,\lambda} A \ov{\alpha}^{-1}{\left(b_\lambda E_{ij}\right)}.$
		\item If $B/I$ is a finitely generated $k$-module, then $J$ is a finitely generated left $A$-module.
		
		\item $J$ is idempotent if and only if $B=I$.

		\item $J=Ae$ for some (central) idempotent $e$ if and only if $\mathrm{det}(\psi)$ is invertible in $B$. In this case the idempotent can be chosen to be $e=\ov\alpha^{-1}(\mathrm{det}(\psi)^{-1}\psi^+)$ and $A$ decomposes as the ring direct product $A\simeq A/J \times M_n(B)$.

	\end{enumerate}
\end{proposition}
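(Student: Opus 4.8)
The overall plan is to prove all four parts by transporting the intrinsic ring-theoretic facts of Proposition \ref{GeneralisedMatrixRings} along the ring isomorphism $\ov\alpha\colon J\xrightarrow{\sim}\wt{M_n(B)}$, and then exploiting that $J$ is a two-sided ideal of $A$ to pass between the structure of $J$ as a module over itself and as a left $A$-module. The organising observation is that, because $J$ is an ideal of $A$, the multiplication of $J$ as a subring of $A$ agrees under $\ov\alpha$ with the deformed product $*$ on $\wt{M_n(B)}$; in particular $\ov\alpha(e_{ij})=E_{ij}$ and $\ov\alpha(J^2)=\wt{M_n(B)}*\wt{M_n(B)}$. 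For part (1), I would first translate Proposition \ref{GeneralisedMatrixRings}(5), namely $\wt{M_n(B)}*\wt{M_n(B)}=\sum_{i,j}\wt{M_n(B)}*E_{ij}=M_n(I)$, into $J^2=\sum_{i,j}Je_{ij}$; the inclusions $\sum Je_{ij}\subseteq\sum Ae_{ij}\subseteq J$ are then immediate from $J\subseteq A$ and $Ae_{ij}\subseteq AJ\subseteq J$. For the last equality I would use that $\{b_\lambda+I\}$ generating $B/I$ as a $k$-module yields $B=I+\sum_\lambda k\,b_\lambda$, hence $M_n(B)=M_n(I)+\sum_{i,j,\lambda}k\,b_\lambda E_{ij}$; applying $\ov\alpha^{-1}$ and recalling $\ov\alpha^{-1}(M_n(I))=J^2$ and $k\subseteq A$ gives $J=J^2+\sum_{i,j,\lambda}A\,\ov\alpha^{-1}(b_\lambda E_{ij})$.

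Parts (2) and (3) are then short. For (2), if $B/I$ is finitely generated over $k$ then $\Lambda$ may be taken finite, and combining the chain of (1) gives $J=\sum_{i,j}Ae_{ij}+\sum_{i,j,\lambda}A\,\ov\alpha^{-1}(b_\lambda E_{ij})$, a finite sum of cyclic left $A$-modules, hence finitely generated. Part (3) is just Proposition \ref{GeneralisedMatrixRings}(6) transported: $J=J^2$ if and only if $M_n(I)=M_n(B)$ if and only if $I=B$.

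The substance, and the main obstacle, is part (4): one must bridge the intrinsic statement of Proposition \ref{GeneralisedMatrixRings}(7) (about $J$ as a module over itself) with the required statement about $J$ as a left ideal of the ambient algebra $A$, and upgrade ``central in $J$'' to ``central in $A$''. For the direction assuming $\det(\psi)$ invertible, I would set $e=\ov\alpha^{-1}(\det(\psi)^{-1}\psi^+)$; by Proposition \ref{GeneralisedMatrixRings}(7) its image is a central idempotent generating $\wt{M_n(B)}$ as both a left and a right ideal over itself, so $e$ is a central idempotent of the ring $J$ with $J=Je=eJ$. Since $e\in J$ and $J$ is an ideal, $Ae\subseteq J$ while $J=Je\subseteq Ae$, giving $J=Ae$, and symmetrically $J=eA$. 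Centrality of $e$ in $A$ then follows from a Peirce-type argument: for $a\in A$ one has $ae\in Ae=eA$, forcing $eae=ae$, and $ea\in eA=Ae$, forcing $eae=ea$, whence $ae=ea$. With $e$ central, $A=Ae\oplus A(1-e)$ as a ring direct product; since $\det(\psi)$ is invertible the map $\varphi$ identifies $Ae=J\cong\wt{M_n(B)}\cong M_n(B)$, while $A(1-e)\cong A/J$, yielding $A\cong A/J\times M_n(B)$. This direction also supplies the parenthetical ``central'' in the statement.

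For the converse, suppose $J=Ae$ with $e^2=e$. The key step is that $e$ is a right identity for $J$: any $x\in J=Ae$ has the form $x=ae$, so $xe=ae^2=ae=x$, whence $J=Je$. Transporting along $\ov\alpha$ gives $\wt{M_n(B)}=\wt{M_n(B)}*\ov\alpha(e)$, exhibiting $\wt{M_n(B)}$ as a principal (cyclic) left module over itself, so Proposition \ref{GeneralisedMatrixRings}(7) forces $\det(\psi)$ to be invertible. I expect the only delicate points to be the bookkeeping distinguishing unital from non-unital cyclic generation (using that $\ov\alpha(e)$ is itself idempotent, hence lies in $\wt{M_n(B)}*\ov\alpha(e)$, so that $J=Je$ genuinely exhibits $J$ as cyclic) and keeping the deformed product $*$ straight under $\ov\alpha$; everything else is a direct transport of the results of Section 2.
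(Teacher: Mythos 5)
Your proof is correct and follows essentially the same route as the paper: every part is obtained by transporting Proposition \ref{GeneralisedMatrixRings} along the ring isomorphism $\ov{\alpha}$ and then using that $J$ is a two-sided ideal of $A$, with the converse of (4) reduced, exactly as in the paper, to $J=Je$ and Proposition \ref{GeneralisedMatrixRings}(7). If anything, your Peirce-type verification in part (4) that $e$ is central in $A$ (via $J=Ae=eA$, so $ae=eae=ea$) makes explicit a step the paper leaves implicit, since Proposition \ref{GeneralisedMatrixRings}(7) only yields centrality of $e$ in the ring $J$.
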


\begin{proof}
	(1)  $J^2 = \sum_{i,j=1}^n Je_{ij}$ follows from 
	Proposition \ref{GeneralisedMatrixRings}(5). 
	Any element of $J$ is of the form $\sum_{i,j=1}^n\ov{\alpha}^{-1}\left( b_{ij} E_{ij}\right)$ for some $b_{ij}\in B$.
	Any $b\in B$ can be written as $ b = \sum_{i,j} c_{ij}\psi_{ij} + \sum_{\lambda} \mu_{\lambda} b_\lambda$ for some $c_{ij}\in B$ and only finitely many non-zero $\mu_{\lambda}\in k$.
	Hence for any $1\leq s,t\leq n$ one has 
\begin{eqnarray*}
\ov{\alpha}^{-1}\left(bE_{st}\right) &=& \sum_{i,j} \ov{\alpha}^{-1}\left(c_{ij}\psi_{ij}E_{st}\right) + \sum_{\lambda} \mu_{\lambda} \ov{\alpha}^{-1}\left(b_\lambda E_{st}\right)\\
&=& \sum_{i,j} \ov{\alpha}^{-1}\left(c_{ij}E_{si}\right) e_{jt}  + \sum_{\lambda} \left(\mu_{\lambda}1_A\right) \ov{\alpha}^{-1}\left(b_\lambda E_{st}\right) \in J^2 + 
\sum_{i,j,\lambda} A \ov{\alpha}^{-1}{\left(b_\lambda E_{ij}\right)}.
\end{eqnarray*} 
	Thus $J=J^2 + \sum_{i,j,\lambda} A \ov{\alpha}^{-1}{\left(b_\lambda E_{ij}\right)}$.

	(2) By (1), $J = \sum_{i,j} Ae_{ij} +  \sum_{i,j,\lambda} A \ov{\alpha}^{-1}{\left(b_\lambda E_{ij}\right)}$. Hence if $B/I$ has a finite generation set as $k$-module, we can choose $\Lambda$ to be finite and $J$ is a finitely generated left $A$-module.
		
	(3) follows from Proposition \ref{GeneralisedMatrixRings}(6). 
	
	(4)  If $J=Ae$ for some idempotent $e\in J$, then for any $x\in J$ there exist $a\in A$ with $x=ae$. Hence $x=xe$ and $J=Je$. Thus $\wt{M_n(B)}=\wt{M_n(B)}*\ov\alpha(e)$. By Proposition \ref{GeneralisedMatrixRings}(7), $d=\mathrm{det}(\psi)$ is invertible. On the other hand Proposition \ref{GeneralisedMatrixRings} also says that $e=\ov\alpha^{-1}(d^{-1}\psi^+)$ is a central idempotent if $d$ is invertible.
		
	Moreover, the map $\varphi:\wt{M_n(B)}\rightarrow M_n(B)$ is surjective since $a=d^{-1}a\psi^+\psi= \varphi(d^{-1}a\psi^+)$. Hence $Ae=J\simeq M_n(B)$. Since $e$ is a central idempotent, $1-e$ is also one and leads to the  decomposition $A=A(1-e) \oplus Ae \simeq A/J\times M_n(B)$.

\end{proof}

\begin{corollary}
Let $J$ be an affine cell ideal of $A$ with cell data $B$, $n$ and $\psi$ as above. If $k$ is a field, $B$ a commutative affine domain over $k$ of Krull dimension less than or equal to one and $\psi$ is non-zero, then $J$ is a finitely generated left $A$-module.
\end{corollary}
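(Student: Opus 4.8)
The plan is to reduce the statement to Proposition \ref{Proposition_finitelygenerated}(2), which guarantees that $J$ is a finitely generated left $A$-module as soon as $B/I$ is a finitely generated $k$-module, where $I$ denotes the ideal of $B$ generated by the entries of $\psi$. Since $k$ is a field, ``finitely generated $k$-module'' means precisely ``finite dimensional over $k$'', so the whole problem reduces to showing that $\dim_k(B/I) < \infty$.

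First I would observe that the hypothesis $\psi \neq 0$ forces $I \neq 0$: some entry $\psi_{ij}$ is a nonzero element of $B$, and this entry lies in $I$. If $I = B$ there is nothing to prove, since then $B/I = 0$ and $J$ is even idempotent by Proposition \ref{Proposition_finitelygenerated}(3). So I may assume that $I$ is a proper nonzero ideal of the affine domain $B$.

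The key step is then the claim that $B/I$ has Krull dimension $0$. Let $\mathfrak{p}$ be a minimal prime over $I$. Because $B$ is a domain and $I \neq 0$, the prime $\mathfrak{p}$ cannot be $(0)$, so $\mathrm{ht}(\mathfrak{p}) \geq 1$. Since $B$ is an affine domain over the field $k$, it is catenary and equidimensional, so the dimension formula $\mathrm{ht}(\mathfrak{p}) + \dim(B/\mathfrak{p}) = \dim(B)$ holds; combined with $\dim(B) \leq 1$ this yields $\dim(B/\mathfrak{p}) \leq 1 - 1 = 0$. As $\dim(B/I)$ is the maximum of the $\dim(B/\mathfrak{p})$ taken over the finitely many minimal primes of $I$, I conclude $\dim(B/I) = 0$.

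Finally, $B/I$ is an affine $k$-algebra of Krull dimension $0$, so by Noether normalization it is a finitely generated module over the polynomial ring in $\dim(B/I) = 0$ variables, that is, over $k$ itself; hence $B/I$ is finite dimensional over $k$. Proposition \ref{Proposition_finitelygenerated}(2) then applies and shows that $J$ is a finitely generated left $A$-module. The only point that requires genuine care is the dimension drop $\dim(B/I) = \dim(B) - 1$, which rests on the equidimensionality of affine domains over a field; everything else is a direct application of the already established Proposition \ref{Proposition_finitelygenerated}.
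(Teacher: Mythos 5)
Your proposal is correct and follows essentially the same route as the paper: both reduce to Proposition \ref{Proposition_finitelygenerated}(2) by showing that $I\neq 0$ forces $B/I$ to be a zero-dimensional affine $k$-algebra, hence finite dimensional over $k$ (the paper case-splits on $Kdim(B)\in\{0,1\}$ and cites that Artinian affine algebras are finite dimensional, whereas you invoke the dimension formula). Your appeal to catenarity and equidimensionality is heavier than necessary, since in a Noetherian domain of Krull dimension at most one every nonzero prime is already maximal, but the argument is sound.
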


\begin{proof}
Since $\psi$ is non-zero, the ideal $I$ generated by the non-zero entries of $\psi$ in $B$ is a non-zero ideal. If $B$ has Krull dimension $0$, then $B$ is a finite field extension of $k$ and $B=I$. If $B$ has Krull dimension $1$, then $B/I$ has Krull dimension zero as $I$ is non-zero. Since $B/I$ is a commutative Artinian affine $k$-algebra, it is finite dimensional over $k$ (see \cite{McConnellRobson}*{13.10.3}). Now the claim follows by Proposition \ref{Proposition_finitelygenerated}(2).
\end{proof}

Given any affine commutative $k$-algebra $B$, $n\geq 1$ and symmetric matrix 
$\psi \in M_n(B)$, we can construct algebras $A$ that contain $J=\widetilde{M_n(B)} = (M_n(B),\psi)$ as an affine cell ideal. More precisely for any group $G$ and group homomorphism $\rho:G\rightarrow S_n$, the group algebra $B_0=k[G]$ over $k$ acts on $J$ by setting 
$$g\cdot (bE_{ij}) = bE_{\rho(g)(i)j} 
\qquad \mbox{and} 
\qquad bE_{ij}\cdot g = bE_{i\rho(g^{-1})(j)}, 
\qquad \forall g\in G, b\in B, i,j\in\{1,\ldots,n\}.$$
With this action $J$ becomes a $B_0$-bimodule such that the bimodule action of $B_0$ on $J$ is associative with the multiplication in $J$, i.e. for all $g\in G, b,c\in B$ and $1\leq i,j,k,l\leq n$:
$$g\cdot (bE_{ij}*cE_{kl}) = g\cdot (bc\psi_{jk} E_{il}) = bc\psi_{jk}E_{\rho(g)(i)l} = (bE_{\rho(g)(i)j}) * (cE_{kl}) =  (g\cdot (bE_{ij})) * (cE_{kl}).$$
Similarly, one checks $(bE_{ij}*cE_{kl})\cdot g = (bE_{ij})*((cE_{kl})\cdot g).$ Hence $A=B_0\oplus J$ becomes an algebra over $k$ with multiplication given by
$(g, a)(h, a') = (gh, g\cdot a' + a\cdot h + a*a'),$ for all $a,a'\in J$ and $g,h\in G$ such that $J$ is an ideal of $A$. 

Since we assume that $\psi$ is symmetric, the transpose in $M_n(B)$ extends to an involution $\sigma$ of $A$ with $\sigma(g)=g^{-1}$ for all $g\in G$. We check that
$$\sigma(g\cdot E_{ij}) = \sigma(E_{\rho(g)(i)j}) = E_{j\rho(g)(i)} = E_{ji}\cdot g^{-1} = \sigma(E_{ij})\cdot\sigma(g)$$ for all $g\in G$. It is clear that $\sigma(J)=J$. Thus $J$ is an affine cell ideal of $A$. 

\begin{proposition}\label{propositon_construction}
	Let $A=k[G]\oplus J$ with $J=\widetilde{M_n(B)}$ as above and let $I=\langle \psi_{ij}\mid i,j\rangle$ be the ideal of $B$ generated by the entries of $\psi$. Then $J$ is a finitely generated left ideal of $A$ if and only if $B/I$ is a finitely generated $k$-module.
\end{proposition}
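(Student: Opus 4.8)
The forward implication is already contained in what we have proved: since the construction above exhibits $J = \wt{M_n(B)}$ as an affine cell ideal of $A = k[G]\oplus J$, Proposition \ref{Proposition_finitelygenerated}(2) applies directly and shows that $B/I$ being a finitely generated $k$-module forces $J$ to be a finitely generated left $A$-module, in particular a finitely generated left ideal. So the only real content lies in the converse, and the plan is to reduce the question about the $A$-module $J$ to a question about a permutation module over $k[G]$.

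The first step is to pass to the quotient by $J*J$. By Proposition \ref{GeneralisedMatrixRings}(5) we have $J*J = M_n(I)$, and I would first check that $M_n(I)$ is a left $A$-submodule of $J$: the summand $J$ sends it into $J*M_n(I)\subseteq J*J = M_n(I)$, while $k[G]$ only permutes rows and hence preserves the property of having all entries in $I$. The resulting quotient $J/M_n(I) \cong M_n(B/I)$ is then a left $A$-module on which the ideal $J$ acts as zero, so its $A$-action factors through $A/J \cong k[G]$, with $G$ acting by permuting rows via $\rho$. If $J$ is finitely generated as a left $A$-module, this quotient is finitely generated over $A$, and because the action factors through $k[G]$ it is a finitely generated $k[G]$-module.

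It then remains to descend from $M_n(B/I)$ to $B/I$. Here I would introduce the column-sum map $\epsilon: M_n(B/I)\to B/I$, $\epsilon(\sum_{i,j}\ov{b}_{ij}E_{ij}) = \sum_{i=1}^n \ov{b}_{i1}$, where $B/I$ is given the trivial $G$-action; a short computation using that $g$ merely permutes rows shows that $\epsilon$ is surjective and $k[G]$-linear. Consequently $B/I$, with trivial $G$-action, is a finitely generated $k[G]$-module, and the final step is the elementary observation that a trivial $k[G]$-module generated by $\ov{b}_1,\ldots,\ov{b}_s$ over $k[G]$ is already generated by these elements over $k$, since $g\cdot\ov{b}_l = \ov{b}_l$ collapses every group element to a scalar. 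I expect the only genuinely delicate point to be the first step: recognising that quotienting by $J*J = M_n(I)$ is exactly what annihilates the self-action of $J$ and turns the problem into a clean statement about the permutation $k[G]$-module $M_n(B/I)$; once this reduction is in place, the augmentation argument is routine.
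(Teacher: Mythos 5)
Your proof is correct, and both directions rest on the same two facts the paper's own proof uses: Proposition \ref{Proposition_finitelygenerated}(2) for the forward implication and, for the converse, the identity $J*J=M_n(I)$ from Proposition \ref{GeneralisedMatrixRings}(5) together with the fact that $k[G]$ acts only by permuting rows. The difference is in the packaging. The paper argues at the level of elements: given generators $x_t=\sum_{i,j}b^t_{ij}E_{ij}$ of $_AJ$, it writes an arbitrary $bE_{11}$ as $\sum_t a_tx_t$, splits each $a_t\in A=k[G]\oplus J$ into its group part and its $J$-part, observes that the $J$-part contributes only elements of $J*J=M_n(I)$, and compares coefficients of $E_{11}$ to conclude $B=I+\sum_{i,t}k\,b^t_{i1}$. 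You reorganise the same mechanism module-theoretically: you quotient by the $A$-submodule $M_n(I)$, recognise $J/M_n(I)\cong M_n(B/I)$ as a permutation module over $A/J\cong k[G]$, and project onto $B/I$ (with trivial $G$-action) via the column-sum map, after which the augmentation observation finishes the argument. Your route avoids the coefficient bookkeeping and even gives a marginally sharper conclusion --- $B/I$ is spanned over $k$ by the $m$ column sums $\sum_i \ov{b}^t_{i1}$, one per generator, rather than by all $nm$ entries $\ov{b}^t_{i1}$ --- while the paper's computation is more self-contained, since it never needs to verify that $M_n(I)$ is an $A$-submodule or set up the quotient-module formalism. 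Both are complete proofs; yours trades a little extra scaffolding for conceptual transparency.
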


\begin{proof}
Let $G$ be a group and $\rho:G\rightarrow S_n$ a group homomorphism such that $k[G]$ acts on $J=(M_n(B),\psi)$ as described above. Set $A=k[G]\oplus J$. Suppose $J$ is generated as left $A$-module by elements $x_1, \ldots, x_m \in J$. For each $1\leq t \leq m$ there exist elements $b^t_{ij}\in B$ such that $x_t = \sum_{i,j} b^t_{ij}E_{ij}$. We claim that $B=I+\sum_{i,t} kb^t_{i1}$ as $k$-module. For any $b\in B$, $bE_{11} \in J$. Thus there exist elements $a_1,\ldots, a_m \in A$ such that $bE_{11} = a_1x_1+\cdots + a_mx_m$. Each of the elements $a_t$ can be written as $a_t = \sum_{g\in G} \lambda_g g + c_t$ for $c_t\in J$ and finitely many non-zero elements $\lambda_g \in k$. Thus
$$ bE_{11} = \sum_{g,i,j,t} \lambda_g g\cdot (b^t_{ij}E_{ij}) + \sum_{t=1}^m c_t*x_t
= \sum_{g,i,j,t} \lambda_g b^t_{ij}E_{\rho(g)(i)j} + \sum_{t=1}^m c_t*x_t.$$
Hence 
$ bE_{11}-\sum_{g,i,j,t} \lambda_g b^t_{ij}E_{\rho(g)(i)j} \in J*J=M_n(I)$. 
Comparing the coefficients of $E_{11}$ we get
$ b - \sum_{g,t} \lambda_g b^t_{\rho(g^{-1})(1)1} \in I$.
Thus $B/I$ is finitely generated as $k$-module. 

The converse follows from Proposition \ref{Proposition_finitelygenerated}(2).
\end{proof}

\begin{example}\label{example_affineTL}
One particular instance of the construction above is the case $B=k[x]$, $n=2$,  $\psi=\left(\begin{array}{cc}q & x \\ x & q\end{array}\right)$ with $q\in k$ and $G=\langle \tau\rangle$ is the infinite cyclic group with group homomorphism $\rho:G\rightarrow S_2$ sending $\tau$ to the cycle $(12)$. Then $B_0=k[G]=k[\tau,\tau^{-1}]$ and $A=k[\tau^{\pm 1}]\oplus \widetilde{M_2(k[x])}$ is isomorphic to the affine Temperly-Lieb algebra with two vertices and parameter $q$. Let $I=\langle q, x\rangle$ be the ideal generated by the entries of $\psi$. Since $x\in I$, $B/I$ is isomorphic to a quotient of $k$ and hence a cyclic $k$-module. Thus, by  Proposition \ref{propositon_construction} or Proposition  \ref{Proposition_finitelygenerated}(2), $J$ is a finitely generated left $A$-module. 
By Propositon \ref{GeneralisedMatrixRings}(6), $J$ is an idempotent ideal if and only if $I=B$, which is equivalent to $q$ being invertible in $k$. Hence over a field $k$, $J$ is  idempotent if and only if $q\neq 0$. Note that $\mathrm{det}(\psi)=q^2-x^2$ is a non-invertible, non-zero divisor in $B$. By Proposition \ref{GeneralisedMatrixRings}(7), $J$ is not generated by an idempotent element.

In case $q$ is not invertible in $k$,  $I \neq B$ and Proposition \ref{GeneralisedMatrixRings}(6) shows that $J$ is not idempotent. However as $\mathrm{det}(\psi)=q^2-x^2$ is a non-zero divisor in $B$, the adjoint matrix $\ov\alpha^{-1}(\psi^+)$ is a central non-zero divisor in $J$. By Theorem \ref{endomorphismring_cellideal}, $\mathrm{End}(_AJ)\simeq M_2(k[x])$.
\end{example}

\begin{example}\label{example_non_finite_cell_ideal}
Consider $B=k[x,y]$ instead of $B=k[x]$ in Example \ref{example_affineTL} and suppose $q=0$, i.e. $\psi=\left(\begin{array}{cc}0 & x \\ x & 0\end{array}\right)$. Then $J = \widetilde{M_2(B)}$ is an example of an affine cell ideal that is not finitely generated as a left $A$-module. Note that although $B_0=k[\tau, \tau^{-1}]$ and $B$ are affine commutative domains, and hence Noetherian (as $k$ is Noetherian), the algebra $A$ is not Noetherian.
\end{example}
 	
\begin{theorem}\label{Theorem_cell_ideals} Let $J$ be an affine cell ideal of a unitary $k$-algebra $A$.
 \begin{enumerate}
  \item $A$ satisfies a polynomial identity if and only if $A/J$ satisfies a polynomial identity.
  \item If $A/J$ is semiprime, then $A$ is semiprime if and only if $B$ is reduced and $\mathrm{det}(\psi)$ is not
a zero divisor in $B$.
 \end{enumerate}
\end{theorem}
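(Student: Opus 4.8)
My plan is to transport both hypotheses through the ring isomorphism $\ov{\alpha}\colon J\simeq \wt{M_n(B)}$ and feed them into the results of Section 2 together with Lemma \ref{annihilators}.

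For part (1) the forward implication is immediate: if $A$ satisfies a monic polynomial $f$, then its image under the canonical surjection $A\to A/J$ shows that $A/J$ satisfies the same monic $f$. The real content is the converse, which I would prove by a ``PI-by-PI'' extension argument arranged so as to yield a \emph{monic} identity. First I would record a convenient identity for $J$ itself: by Lemma \ref{annihilators} every value $s_{2n}(a_1,\dots,a_{2n})$ with $a_i\in J$ lies in the square-zero ideal $\mathrm{Ker}(\ov{\varphi})=\lann_J(J)$, so that
$$\hat s := s_{2n}(x_1,\dots,x_{2n})\,s_{2n}(x_{2n+1},\dots,x_{4n})$$
is a monic \emph{multilinear} identity of $J$ in $4n$ variables (this refines Proposition \ref{GeneralisedMatrixRings}(1)). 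Since $A/J$ is assumed to be a PI-ring it satisfies a monic polynomial $g$; the decisive step is then the substitution $h:=\hat s(g^{(1)},\dots,g^{(4n)})$, where $g^{(1)},\dots,g^{(4n)}$ are copies of $g$ in pairwise disjoint sets of variables. For any elements of $A$ each $g^{(i)}$ takes a value in $J$ (because $g$ is an identity of $A/J$), and $\hat s$ annihilates every $4n$-tuple from $J$; hence $h$ is an identity of $A$. Because $\hat s$ is multilinear each slot is used exactly once, so placing a monic leading word of $g$ in each of the disjoint slots produces a top-degree word of $h$ whose coefficient, by disjointness of the variable sets, cannot cancel and equals $1$. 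Thus $h$ is monic and $A$ is a PI-ring.

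For part (2) I would reduce everything to the semiprimeness of the ring $J$, which by Proposition \ref{GeneralisedMatrixRings}(2) is exactly the condition that $B$ is reduced and $\mathrm{det}(\psi)$ is a non-zero-divisor. For the forward implication I would use that a two-sided ideal of a unital semiprime ring is itself semiprime: given $T\triangleleft J$ with $T^2=0$, one has $(JT)^2=J(TJ)T\subseteq JT^2=0$, so the left ideal $JT$ vanishes, whence $(AT)^2=A(TA)T\subseteq AJT=0$ and finally $T\subseteq AT=0$. Thus $A$ semiprime forces $J$ semiprime, and Proposition \ref{GeneralisedMatrixRings}(2) gives the two conditions (this direction does not even use the hypothesis on $A/J$). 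For the converse I would combine both semiprimeness assumptions: with $B$ reduced and $\mathrm{det}(\psi)$ a non-zero-divisor, $J$ is semiprime; now if $K\triangleleft A$ with $K^2=0$, its image in $A/J$ is a square-zero ideal and hence zero by semiprimeness of $A/J$, so $K\subseteq J$; then $K$ is a square-zero ideal of the ring $J$ and vanishes by semiprimeness of $J$. As $A$ has no nonzero square-zero ideal, it is semiprime.

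The step I expect to be the main obstacle is ensuring \emph{monicity} of the extension identity in part (1): any extension of PI-rings trivially satisfies \emph{some} identity via the substitution above, but the definition of PI-ring used here demands a monic one, and it is the multilinearity of the outer polynomial $\hat s$ together with the disjointness of the variable blocks that guarantees the leading coefficient is $1$. (If one is content to cite a PI-extension theorem, an alternative is to pass to $A/N$ with $N=\lann_J(J)$ square-zero, note that $J/N\hookrightarrow M_n(B)$ is PI, and then apply \cite{McConnellRobson}*{13.1.7(iv)} to lift back over the nilpotent ideal $N$.) Part (2), by contrast, is essentially bookkeeping once the semiprimeness criterion of Proposition \ref{GeneralisedMatrixRings}(2) and the ``trapping'' of nilpotent ideals inside $J$ are in hand.
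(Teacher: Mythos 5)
Your proposal is correct, and its skeleton matches the paper's: part (1) is proved by composing an identity of $J$ (outer polynomial) with copies of an identity of $A/J$ (inner, in pairwise disjoint variable sets), and part (2) by trapping nilpotent ideals of $A$ inside $J$. There are, however, two genuine differences in the ingredients. In (1), the paper invokes \cite{McConnellRobson}*{13.1.19} to replace the identities of \emph{both} $A/J$ and $J$ by monic multilinear ones, and then checks monicity of the composition by expanding a double sum over permutations; you instead manufacture an explicit monic multilinear identity for $J$, namely $\hat s=s_{2n}(x_1,\dots,x_{2n})\,s_{2n}(x_{2n+1},\dots,x_{4n})$, which is legitimate because every value of $s_{2n}$ on $J$ lies in $\lann_J(J)=\mathrm{Ker}(\ov\varphi)$ (Lemma \ref{annihilators} plus Amitsur--Levitzki; this is exactly the disjoint-variable refinement of Lemma \ref{lemma_PIRings}). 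With the outer polynomial multilinear, the identity of $A/J$ only needs to be monic, so your argument avoids the citation and is more self-contained, at the cost of carrying out the disjointness bookkeeping yourself. In (2), your backward direction coincides with the paper's; your forward direction is genuinely different: the paper shows directly that the square-zero $A$-ideals $\mathrm{Ker}(\ov\varphi)$ and $\mathrm{Ker}(\overline{\varphi'})$ vanish (yielding the determinant condition via \cite{Brown}*{Theorem 9.1}) and obtains reducedness of $B$ by an explicit computation with $a=\ov\alpha^{-1}(b\mathbb{I}_n)$, $b^2=0$, whereas you prove the general lemma that a two-sided ideal of a unital semiprime ring is itself a semiprime ring and then quote Proposition \ref{GeneralisedMatrixRings}(2). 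Your route is more conceptual and makes transparent that this implication needs no hypothesis on $A/J$ (also true of the paper's argument, but less visibly so).

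Two small points should be patched. First, $JT$ and $AT$ are only \emph{left} ideals of $A$, while semiprimeness is defined in the paper via two-sided ideals; either cite the standard fact that a semiprime unital ring has no nonzero nilpotent one-sided ideals, or work with the two-sided ideal $ATA\supseteq T$, for which $(ATA)^3\subseteq A(TJT)A\subseteq AT^2A=0$. Second, your last step in (2) passes from ``no nonzero square-zero ideal'' to ``semiprime''; this uses the standard reduction that if $K^n=0$ with $n\geq 2$ minimal, then $K^{n-1}$ is a nonzero square-zero ideal. Both are routine, so neither constitutes a gap.
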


\begin{proof}
	(1)  It is clear that if $A$ is a PI-ring, then so is $A/J$. Suppose that $A/J$ is a PI-ring. An element $f \in \ZZ\langle x_1, \ldots, x_n\rangle$ of the form $f = \sum_{\sigma \in S_n} a_\sigma x_{\sigma(1)}\cdots x_{\sigma(n)}$ is called a {\em multilinear polynomial}. By \cite{McConnellRobson}*{13.1.19}, any PI-ring satisfies a monic multilinear polynomial. Let $f(x_1, \ldots, x_n)$ be a monic multilinear polynomial which $A/J$ satisfies. By Proposition \ref{GeneralisedMatrixRings}, $J$  is a PI-ring and satisfies a  monic multilinear polynomial  $g(y_1, \ldots, y_m)$. Define the composition of $f$ and $g$ in $nm$ variables $z_{ij}$, $1\leq i \leq m, 1\leq j \leq n$, as 
	$$h(z_{11},\ldots, z_{1n}, \ldots, z_{m1}, \ldots, z_{mn}) := g\left(f(z_{11}, \ldots, z_{1n}), f(z_{21}, \ldots, z_{2n}), \ldots, f(z_{m1}, \ldots, z_{mn})\right).$$ 
	Then, for any family of elements $(r_{ij})_{1\leq i \leq m, 1\leq j \leq n} \in A$ one has that $f_i:=f(r_{i1}, \ldots, r_{in}) \in J$ for any $i$ as $f$ is an identity for $A/J$ and hence $g(f_1, \ldots, f_m)=0$ as $g$ is an identity for $J$. To see that $h$ is a monic polynomial, we can write $f=\sum_{\sigma\in S_n} a_{\sigma} x_{\sigma(1)} \cdots x_{\sigma(n)}$ and $g=\sum_{\tau\in S_m} b_{\tau} y_{\tau(1)} \cdots y_{\tau(m)}$. Then 
	\begin{eqnarray*}
		h(z_{11}, \ldots, z_{mn})
		&=& \sum_{\sigma_1, \ldots, \sigma_m  \in S_n} a_{\sigma_1}\cdots a_{\sigma_m} g(z_{1\sigma_1(1)}\cdots z_{1\sigma_1(n)}, \ldots, z_{m\sigma_m(1)}\cdots z_{m\sigma_m(n)})\\
		&=&\sum_{\tau \in S_m} \sum_{\sigma_1, \ldots, \sigma_m \in S_n} b_\tau  a_{\sigma_1}\cdots a_{\sigma_m} z_{\tau(1)\sigma_1(1)}\cdots z_{\tau(1)\sigma_1(n)} \cdots z_{\tau(m)\sigma_m(1)}\cdots z_{\tau(m)\sigma_m(n)}
	\end{eqnarray*}
	Since $f$ and $g$ are monic, there are $\sigma' \in S_n$ and $\tau' \in S_m$ with $a_{\sigma'}=1=b_{\tau'}$. Thus, for $\sigma_i:=\sigma'$, also $b_{\tau'} a_{\sigma_1}\cdots a_{\sigma_m}=1$. Moreover, all occurring monomials in this representation of $h$  are different.
	
	(2) If $I$ is a nilpotent ideal of $A$, then $(I+J)/J$ is nilpotent in $A/J$ and therefore must be zero since $A/J$ is semiprime. Hence $I$ is a nilpotent ideal contained in  $J$. If $B$ is reduced and $\mathrm{det}(\psi)$ is not a zero divisor in $B$, then by Propositon \ref{GeneralisedMatrixRings} $\wt{M_n(B)} \simeq J$ is a semiprime ring. Hence $I=0$. 
	
	For the converse suppose that $A$ is semiprime. The square-zero  ideals  $\mathrm{Ker}(\overline{\varphi})=\lann_J(J)$ and $\mathrm{Ker}(\overline{\varphi'}) $ must be zero and $\psi$ is neither a right nor a left zero divisor. By \cite{Brown}*{Theorem 9.1}, $\mathrm{det}(\psi)$ is not a zero divisor in $B$. 
		Let $b\in B$ be an element such that $b^2=0$ and consider the elements  $b\mathbb{I}_n \in M_n(B)$ and $a=\ov\alpha^{-1}(b\mathbb{I}_n)\in J$. The set $aJ$ is a right ideal of $A$ and satisfies 	
	$$(aJ)^2 = \ov\alpha^{-1}\left( b\mathbb{I}_n *  \wt{M_n(B)} * b\mathbb{I}_n *  \wt{M_n(B)} \right) \subseteq \ov\alpha^{-1}\left( b^2  \wt{M_n(B)}\right) = 0.$$ 
	As $A$ is semiprime, $aJ=0$, i.e. $a\in \lann_J(J) = \{0\}$. Hence $b=0$, showing that $B$ is reduced.
\end{proof}

Given an ideal $J$ of a ring $R$ we will look at embeddings of $R$ into $R/J \times \mathrm{End}(_RJ)$, which will be later used to show how to construct a possible embedding of an affine cellular algebra into its asymptotic algebra. As before, the right $R$-module structure of $J$ yields a ring homomorphism $\rho: R\rightarrow \mathrm{End}( {_RJ})$ with $a\mapsto \rho(a)=:\rho_a$ being the right multiplication of $a\in R$ on $J$, whose kernel is $\rann_R(J)$. 

Note that $c(R)\subseteq \rho^{-1}(c(\mathrm{End}(_RJ)))$ holds for the centre $c(R)$ of $R$, because for any central element $a\in c(R)$, $f\in \mathrm{End}(_RJ)$ and $b\in J$: 
$$(f\circ \rho_a - \rho_a\circ f)(b) = f(ba)-f(b)a = f(ab)-af(b)=0$$ as $f$ is left $R$-linear. This shows  $\rho_a\in c(\mathrm{End}(_RJ))$.

\begin{lemma}\label{embeddings_lemma}  Let $R$ be a ring, $J$ an ideal of $R$ and $\rho$ as above. 
	\begin{enumerate}
		\item If $\rann_R(J)=0$, then $R\hookrightarrow\mathrm{End}(_RJ)$ and $c(R)=\rho^{-1}(c(\mathrm{End}(_RJ)))$.
		\item Let $\Phi: R\rightarrow R/J \times \mathrm{End}({_RJ})$ be the ring homomorphism given by $\Phi(a)=(a+J, \rho_a)$ for all $a\in R$. Then
		$\Phi$ is an embedding if and only if $\rann_J(J)=0$. In this case  $$c(R)=\Phi^{-1}(c(R/J)\times c(\mathrm{End}({_RJ}))).$$		
	\end{enumerate}
\end{lemma}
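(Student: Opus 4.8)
The plan is to prove Lemma \ref{embeddings_lemma} by analysing the kernel of the relevant ring homomorphisms and then tracking the centre through the embedding. Recall that $\rho:R\to\mathrm{End}(_RJ)$ has kernel $\rann_R(J)$, since $\rho_a=0$ means $ba=0$ for all $b\in J$, i.e. $a\in\rann_R(J)$.

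\emph{Part (1).} Assume $\rann_R(J)=0$. Then $\ker(\rho)=\rann_R(J)=0$, so $\rho$ is injective, giving the embedding $R\hookrightarrow\mathrm{End}(_RJ)$. For the centre, the inclusion $c(R)\subseteq\rho^{-1}(c(\mathrm{End}(_RJ)))$ was already established in the paragraph preceding the lemma. For the reverse inclusion, suppose $a\in R$ satisfies $\rho_a\in c(\mathrm{End}(_RJ))$. I would test $\rho_a$ against the endomorphisms of $_RJ$ coming from right multiplication by arbitrary $r\in R$: for each $r$, the map $\rho_r:b\mapsto br$ lies in $\mathrm{End}(_RJ)$, so centrality of $\rho_a$ forces $\rho_a\circ\rho_r=\rho_r\circ\rho_a$, i.e. $b(ra)=b(ar)$ for all $b\in J$, whence $b(ra-ar)=0$ for all $b\in J$. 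This says $ra-ar\in\rann_R(J)=0$, so $ra=ar$ for all $r\in R$, i.e. $a\in c(R)$. Thus $\rho^{-1}(c(\mathrm{End}(_RJ)))\subseteq c(R)$, giving equality.

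\emph{Part (2).} The map $\Phi(a)=(a+J,\rho_a)$ has kernel $J\cap\rann_R(J)$: indeed $\Phi(a)=0$ forces $a\in J$ and $\rho_a=0$, i.e. $a\in\rann_R(J)$. Now $J\cap\rann_R(J)=\rann_J(J)$, since an element $a\in J$ lies in $\rann_R(J)$ precisely when $ba=0$ for all $b\in J$ (the defining condition for $a\in\rann_J(J)$). Hence $\Phi$ is injective if and only if $\rann_J(J)=0$. For the centre statement under the assumption $\rann_J(J)=0$, the inclusion $c(R)\subseteq\Phi^{-1}(c(R/J)\times c(\mathrm{End}(_RJ)))$ follows because $\Phi$ is a ring homomorphism (images of central elements are central in the image, and the target decomposes componentwise). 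For the reverse inclusion I would take $a\in R$ with $\Phi(a)\in c(R/J)\times c(\mathrm{End}(_RJ))$; in particular $\rho_a\in c(\mathrm{End}(_RJ))$. Mimicking Part (1), for every $r\in R$ centrality against $\rho_r$ yields $b(ra-ar)=0$ for all $b\in J$, so $ra-ar\in\rann_R(J)$; since also $a+J$ is central in $R/J$, we have $ra-ar\in J$, whence $ra-ar\in J\cap\rann_R(J)=\rann_J(J)=0$. Thus $a\in c(R)$.

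The routine parts are the kernel computations and the straightforward direction of each centre identity (homomorphisms preserve centrality into the image). The one point demanding a little care is the reverse centre inclusion in Part (2): unlike Part (1), injectivity of $\rho$ alone is not available, so I cannot directly conclude $ra=ar$ from $ra-ar\in\rann_R(J)$; instead I must combine the annihilator condition with the centrality of $a+J$ in $R/J$ to land in $J\cap\rann_R(J)=\rann_J(J)$, which is zero by hypothesis. Isolating and correctly using both pieces of the hypothesis on $\Phi(a)$ is the main subtlety of the argument.
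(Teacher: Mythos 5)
Your proof is correct and takes essentially the same route as the paper's: compute $\mathrm{Ker}(\Phi)=J\cap\rann_R(J)=\rann_J(J)$, use the remark preceding the lemma for the easy centre inclusions, and get the reverse inclusions by testing centrality against right multiplications $\rho_r$ so that the commutator $ra-ar$ lands in the (trivial) annihilator, which is exactly the paper's appeal to injectivity of $\rho$ resp.\ $\Phi$. One small imprecision: in Part (2) your parenthetical ``images of central elements are central in the image'' does not by itself give $\rho_a\in c(\mathrm{End}(_RJ))$ (centrality in $\mathrm{Im}(\rho)$ is weaker than centrality in $\mathrm{End}(_RJ)$); what is needed is precisely the paper's preceding remark, which you correctly cite in Part (1).
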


\begin{proof}
	(1) Suppose $\rann_R(J)=0$, then $\rho$ is injective and $\rho^{-1}(c(\mathrm{End}(_RJ)))\subseteq c(R)$. Since the preceding remark showed the reverse inclusion, we obtain equality.
	
	(2) The equations $\mathrm{Ker}(\Phi) = J\cap \mathrm{Ker}(\rho) = J\cap \rann_R(J)=\rann_J(J)$ show that $\Phi$ is injective if and only if $\rann_J(J)=0$.  As remarked before, $\rho_a\in c(\mathrm{End}({_RJ}))$ for $a\in c(R)$. It is clear that $a+J \in c(R/J)$. Hence $\Phi(a)\in c(R/J)\times c(\mathrm{End}({_RJ}))$. The reverse inclusion is clear in case $\Phi$ is injective.
\end{proof}

\begin{example}
	Let $k$ be a field and $R=k[x,y]$. Set $J=Rx$. then $J\simeq \wt{R}=(R,x)$ is an affine cell ideal (with respect to the identity as involution). By Theorem \ref{endomorphismring_cellideal}, $\rho:R \simeq \mathrm{End}(_RJ)$ is an isomorphism as $x$ is a non-zero divisor in the commutative ring $R$.
\end{example}

\begin{example}\label{non_noetherian_example}
	Let $R=k[x,y]/\langle xy \rangle$ and set $J=Rx$. Then $J\simeq \wt{k[x]}=(k[x],x)$ is an affine cell ideal with $\rann_R(J)=k[y]$ and $\rann_J(J)=0$. Combining Lemma \ref{embeddings_lemma} and Theorem \ref{endomorphismring_cellideal}, we obtain an embedding $\Phi: R \hookrightarrow R/J \times \mathrm{End}(_{R}J) \simeq k[y]\times k[x]$.
\end{example}

\begin{proposition}\label{Proposition_determinant}
	Let $J$ be an affine cell ideal of $A$ with cell data $B$, $n$ and $\psi$ as above.
The map $\Phi:A\rightarrow A/J \times \mathrm{End}(_AJ)$ is injective if and only if $\mathrm{det}(\psi)$ is not a zero divisor. 
In this case
		\begin{enumerate}
			\item $\rann_A(J)=\rann_A(\ov\alpha^{-1}(\psi^+))$, where $\psi^+$ denotes the adjoint matrix of $\psi$ in $M_n(B)$.
			\item $\ov\alpha^{-1}(b\psi^+)$ is central in $A$ for any non-zero divisor $b\in B$.
			\item $\mathrm{End}({_AJ}) \simeq M_n(B)$.			
			\item  $\Phi:A \hookrightarrow A/J \times M_n(B)$ is an embedding and $c(A) = \Phi^{-1}( c(A/J) \times B).$
		\end{enumerate}
\end{proposition}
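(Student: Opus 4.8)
The plan is to deduce all four statements from the two general results Lemma~\ref{embeddings_lemma} and Lemma~\ref{annihilators}, using as the single essential tool the element $e:=\ov{\alpha}^{-1}(\psi^{+})$ built from the adjoint matrix. The opening equivalence is immediate: by Lemma~\ref{embeddings_lemma}(2), $\Phi$ is injective exactly when $\rann_J(J)=0$, and by Lemma~\ref{annihilators} this happens precisely when $\mathrm{det}(\psi)$ is not a zero divisor in $B$. So I would fix $d:=\mathrm{det}(\psi)$ to be a non-zero divisor and prove the remaining items. The first thing to record is that $e$ is a central non-zero divisor of the ring $J$: since $\ov{\alpha}(ex)=\psi^{+}\psi\,\ov{\alpha}(x)=d\,\ov{\alpha}(x)=\ov{\alpha}(x)\psi\psi^{+}=\ov{\alpha}(xe)$ for all $x\in J$ (cf. Proposition~\ref{GeneralisedMatrixRings}(3)), $e$ is central, and $ex=0$ forces $d\,\ov{\alpha}(x)=0$, hence $\ov{\alpha}(x)=0$ because $d$ is a non-zero divisor. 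With $e$ a central non-zero divisor, hypothesis (b) of Theorem~\ref{endomorphismring_cellideal} is met, giving statement (3), namely $\mathrm{End}(_AJ)\simeq M_n(B)$.

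The crux is statement (2), where centrality has to be promoted from the subring $J$ to all of $A$. A direct computation (or Corollary~\ref{centralelementsB}) shows that $(b\psi^{+})*a'=bd\,a'=a'*(b\psi^{+})$ for every $a'\in M_n(B)$, so $z:=\ov{\alpha}^{-1}(b\psi^{+})$ is central in $J$ for every $b\in B$, not only for non-zero divisors. To reach centrality in $A$, I would exploit $\rann_J(J)=0$: for arbitrary $a\in A$ and $x\in J$, associativity together with the centrality of $z$ inside $J$ gives $x(za)=(xz)a=(zx)a=z(xa)=(xa)z=x(az)$, so $x(za-az)=0$; since $za-az\in J$ and $\rann_J(J)=0$, this forces $za=az$. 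The main obstacle lies exactly here: centrality of $z$ in $J$ says nothing about commuting with elements of $A\setminus J$, and the device that rescues the argument is the vanishing right annihilator $\rann_J(J)=0$, which lets left multiplications by elements of $J$ separate points of $J$.

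Statement (1) then falls out of (2) with $b=1$, so that $e$ is central in $A$. The inclusion $\rann_A(J)\subseteq\rann_A(e)$ is clear since $e\in J$; conversely, if $ea=ae=0$ then for each $x\in J$ one has $e(xa)=(xa)e=x(ae)=0$, and $\ov{\alpha}(e(xa))=d\,\ov{\alpha}(xa)$ together with $d$ a non-zero divisor forces $\ov{\alpha}(xa)=0$, hence $xa=0$; thus $a\in\rann_A(J)$. Finally, statement (4) is assembled from the parts already in hand: the opening equivalence gives the embedding $\Phi:A\hookrightarrow A/J\times\mathrm{End}(_AJ)$, part (3) identifies $\mathrm{End}(_AJ)$ with $M_n(B)$, and Lemma~\ref{embeddings_lemma}(2) gives $c(A)=\Phi^{-1}(c(A/J)\times c(\mathrm{End}(_AJ)))$; since $c(M_n(B))=B\mathbb{I}_n\simeq B$, this reads $c(A)=\Phi^{-1}(c(A/J)\times B)$, as claimed.
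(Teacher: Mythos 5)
Your proof is correct, and its scaffolding coincides with the paper's: the opening equivalence comes from Lemma \ref{embeddings_lemma}(2) combined with Lemma \ref{annihilators}, statement (3) from Theorem \ref{endomorphismring_cellideal}(b) applied to the central non-zero divisor $\ov\alpha^{-1}(\psi^+)$, and statement (4) from (3) together with the centre formula of Lemma \ref{embeddings_lemma}(2) and $c(M_n(B))\simeq B$. Where you genuinely depart from the paper is the crux, statement (2). The paper first shows that $z^2$ is central in $A$ (a formal consequence of $z=\ov\alpha^{-1}(b\psi^+)$ being central in the ring $J$), then computes $0=\ov\alpha(az^2-z^2a)=\left(a(b\psi^+)-(b\psi^+)a\right)b\,\mathrm{det}(\psi)$ and cancels the non-zero divisor $b\,\mathrm{det}(\psi)$; this is exactly where the hypothesis that $b$ be a non-zero divisor enters. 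Your argument instead runs the associativity chain $x(za)=(xz)a=(zx)a=z(xa)=(xa)z=x(az)$ for all $x\in J$ and invokes $\rann_J(J)=0$ to conclude $za=az$; it is more elementary (no squaring, no cancellation in $B$) and proves slightly more, namely that $\ov\alpha^{-1}(b\psi^+)$ is central in $A$ for \emph{every} $b\in B$, not only for non-zero divisors. A further consequence of this choice is that you obtain (1) as a corollary of (2) with $b=1$, using that $e=\ov\alpha^{-1}(\psi^+)$ is then central in $A$, whereas the paper proves (1) directly from associativity and the $B$-$A$-bimodule structure, with no appeal to centrality in $A$; both derivations are valid.
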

 
 \begin{proof}
By \cite{Brown}*{Theorem 9.1}, $\mathrm{det}(\psi)$ is not a zero divisor in $B$ if and only if  $\psi$ is not a zero divisor in $M_n(B)$ if and only if $\rann_J(J)=0$ by Lemma \ref{annihilators}. Hence the claim follows from Lemma \ref{embeddings_lemma}.
 	
 	(1) Let $e=\ov\alpha^{-1}(\psi^+)$. Clearly $\rann_A(J) \subseteq \rann_A(e)$. For any $a\in \rann_A(e)$ and $x\in J$:
 	$$ 0=x(ea) = x (\ov\alpha^{-1}(\psi^+) a) = \ov\alpha^{-1}((\ov\alpha(x)*\psi^+)a) =\ov\alpha^{-1}( \mathrm{det}(\psi)\ov\alpha(x)a).$$
 	Since $\mathrm{det}(\psi)$ is not a zero divisor, $\ov\alpha(x)a=0$, hence $xa=0$, i.e. $\rann_A(e)=\rann_A(J)$. 
 	
 	(2) By Corollary \ref{centralelementsB}, $b\psi^+$ is central in $\wt{M_n(B)}$ for any $b\in B$.  Thus $z=\ov\alpha^{-1}(b\psi^+)$ is central in $J$. The element $z^2$ is central in $A$ because $az^2 = (az)z = z(az)= (za)z=z(za)=z^2a$ for all $a\in A$. Hence $$0=\ov\alpha(az^2-z^2a) = a(b\psi^+*b\psi^+)-(b\psi^+*b\psi^+)a  = (ab\psi^+-b\psi^+a)b\mathrm{det}(\psi),$$
 	where we use  $\psi^+*\psi^+ = \mathrm{det}(\psi)\psi^+$ and the $A$-$B$ resp. $B$-$A$-bimodule structure of $M_n(B)$. By hypothesis, $\mathrm{det}(\psi)$ and $b$ are non-zero divisors in $B$. Hence $a(b\psi^+)=(b\psi^+)a$, which means  $\ov\alpha^{-1}(b\psi^+)$  is central in $A$. 
 	
 	(3) By (2), $\psi^+$ is a central non-zero divisor in $\wt{M_n(B)}$. Hence $M_n(B)\simeq \mathring{End}(_AJ)$ by Theorem \ref{endomorphismring_cellideal}.
 	
 	(4) The claim follows from (3) and Lemma \ref{embeddings_lemma}.
\end{proof}


\section{Affine Cellular Algebras}

\begin{definition}[Koenig-Xi, \cite{KoenigXi}*{3.13}]
An algebra $A$ (with the involution $i$) is called {\em affine cellular} if and only if there is a $k$-module
decomposition $A = J_1' \oplus J_2' \oplus \cdots \oplus J_n'$  (for some $n$) with $i(J_j') = J_j'$ for each $j$ and
such that setting $J_j = \bigoplus_{i=1}^j J_i'$ gives a chain of two-sided ideals of $A$: $0=J_0\subset J_1 \subset J_2
\subset \cdots \subset J_n = A$ and for each $1\leq j\leq n$ the quotient $J_j/J_{j-1}$ is an affine cell ideal of
$A/J_{j-1}$ (with respect to the involution induced by $i$ on the quotient). We call this chain a cell chain for the
affine cellular algebra $A$. The module $\Delta_j$ is called a {\em cell lattice} for the affine cell ideal $J_j/J_{j-1}\simeq
\wt{M_{m_j}(B_j)}$ and the  algebra  $M_{m_1}\left(B_1\right)   \times  \cdots \times
M_{m_n}\left(B_n\right)$ is called the {\em asymptotic algebra} of $A$.
\end{definition}

Before applying the results of the previous sections, the following Lemma is important for embedding $A$ into its asymptotic algebra.

\begin{lemma}\label{embeddings}
	Let $R$ be a ring with ascending chain of ideals
	$0=J_{-1}\subset J_0 \subset \cdots \subset J_n=R$ and denote by 
	$\rho^k : R/J_{k-1} \rightarrow \mathrm{End}({_{R/J_{k-1}}J_k/J_{k-1}})$  the right action of $R/J_{k-1}$ on $J_k/J_{k-1}$.
	 Let $m:=\mathrm{min}\{ k \mid  \rann_{R/J_{k-1}}(J_k/J_{k-1}) = 0\}$. Then
	$$\Phi: R \longrightarrow \prod_{k=0}^{m} \mathrm{End}({_{R/J_{k-1}}J_k/J_{k-1}}), \qquad a\longmapsto (\rho_a^0,
	\ldots, \rho_a^m),$$
	is an embedding of rings if and only if 
$\rann_{J_k/J_{k-1}}(J_k/J_{k-1}) = 0$ for all $0\leq k<m$. In this case 
	$$c(R) = \Phi^{-1} \left( \prod_{k=0}^{m} c\left( \mathrm{End}({_{R/J_{k-1}}J_k/J_{k-1}}\right) \right).$$
\end{lemma}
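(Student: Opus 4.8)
The plan is to induct on the length $n$ of the chain, peeling off the bottom ideal $J_0$ and reducing each step to the two-term situation of Lemma \ref{embeddings_lemma}. Write $\rho^0$ for the right action of $R$ on $J_0$; Lemma \ref{embeddings_lemma}(2) gives the homomorphism $\Phi_0\colon R\to R/J_0\times\mathrm{End}({}_RJ_0)$, $a\mapsto(a+J_0,\rho^0_a)$. The quotient $\bar R:=R/J_0$ inherits the chain $0\subset J_1/J_0\subset\cdots\subset J_n/J_0=\bar R$, whose subquotients $J_k/J_{k-1}$ and right actions agree with those of $R$ for $k\ge 1$, so the corresponding map $\bar\Phi$ for $\bar R$ has the same top index $m$. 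Since $\rho^k_a$ depends only on $a+J_{k-1}$ and $J_0\subseteq J_{k-1}$ for $k\ge1$, the degree-$\ge1$ components of $\Phi$ factor through $\bar R$, giving (after reordering factors) $\Phi=(\bar\Phi\times\mathrm{id})\circ\Phi_0$. At the very top, the defining property $\rann_{R/J_{m-1}}(J_m/J_{m-1})=0$ of $m$ means, by Lemma \ref{embeddings_lemma}(1), that $\rho^m$ already embeds $R/J_{m-1}$ into $\mathrm{End}({}_{R/J_{m-1}}J_m/J_{m-1})$, which is what lets the product stop at $k=m$ with no trailing quotient.

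For the implication ``conditions $\Rightarrow$ embedding'' I would iterate this factorisation. When $\rann_{J_k/J_{k-1}}(J_k/J_{k-1})=0$ for every $0\le k<m$, each map in the chain
$$R\hookrightarrow R/J_0\times\mathrm{End}({}_RJ_0),\quad R/J_0\hookrightarrow R/J_1\times\mathrm{End}({}_{R/J_0}J_1/J_0),\quad\ldots$$
is an embedding by Lemma \ref{embeddings_lemma}(2), and the final step $R/J_{m-1}\hookrightarrow\mathrm{End}({}_{R/J_{m-1}}J_m/J_{m-1})$ is automatic by Lemma \ref{embeddings_lemma}(1); since a composite of embeddings is an embedding, $\Phi$ is injective. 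The centre formula then follows by propagating the identity $c(S)=\Psi^{-1}(c(S/I)\times c(\mathrm{End}({}_SI)))$ of Lemma \ref{embeddings_lemma} through each step and composing the preimages, giving $c(R)=\Phi^{-1}\big(\prod_{k=0}^{m}c(\mathrm{End}({}_{R/J_{k-1}}J_k/J_{k-1}))\big)$.

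For the converse I would first identify the kernel explicitly: $\rho^k_a=0$ is equivalent to $J_ka\subseteq J_{k-1}$, so $\mathrm{Ker}(\Phi)=\bigcap_{k=0}^{m}\{a\in R\mid J_ka\subseteq J_{k-1}\}$, and the $k=m$ term forces $\mathrm{Ker}(\Phi)\subseteq J_{m-1}$. Injectivity of $\Phi$ immediately makes the first factor $\Phi_0$ injective, hence $\rann_{J_0}(J_0)=0$ by Lemma \ref{embeddings_lemma}(2), and one then wants to descend to $\bar R$ and apply the inductive hypothesis. The hard part will be exactly this descent: a composite of ring homomorphisms can be injective without each intermediate factor being injective, so the embedding property of $\Phi$ does not formally force the vanishing of the intermediate $\rann_{J_k/J_{k-1}}(J_k/J_{k-1})$; concretely, from $\mathrm{Ker}(\Phi)=0$ one must rule out that a nonzero element of $\mathrm{Ker}(\bar\Phi)$ is cancelled against $\rann_R(J_0)$. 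I would attempt this by a downward induction that, at the least index $k_0<m$ where the condition fails, turns an element of $\rann_{J_{k_0}/J_{k_0-1}}(J_{k_0}/J_{k_0-1})$ into an honest nonzero element of $\mathrm{Ker}(\Phi)$, adjusting it by lower-degree terms using the vanishing of the annihilators below $k_0$; this is the point where the left--right symmetry of annihilators from Lemma \ref{annihilators} available in the affine cellular setting is likely to be needed.
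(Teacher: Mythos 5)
Your proof of the implication ``annihilator conditions $\Rightarrow$ $\Phi$ is an embedding'' and your derivation of the centre formula are correct, and they are essentially the paper's own argument: the paper also peels off $J_0$, writes $\Phi=(\Phi''\times\mathrm{id})\circ\Phi'$ with $\Phi'(a)=(a+J_0,\rho^0_a)$ coming from Lemma \ref{embeddings_lemma} and $\Phi''$ the corresponding map for $R/J_0$, and inducts along the chain, propagating the centre identity of Lemma \ref{embeddings_lemma} exactly as you propose.

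The converse is where your proposal stops at a sketch, and that is a genuine gap --- but you have located the difficulty precisely, and it is the very same gap in the paper's own proof: the paper simply asserts that $(\Phi''\times\mathrm{id})\circ\Phi'$ is injective \emph{if and only if} $\Phi'$ and $\Phi''$ are, giving no argument that injectivity of the composite forces injectivity of $\Phi''$. (It does force injectivity of $\Phi'$, and indeed $\rann_{J_0}(J_0)\subseteq\mathrm{Ker}(\Phi)$ always, so the condition at $k=0$ follows; the problem is the intermediate indices $1\le k<m$.) Moreover, your suspicion that the descent cannot be done formally is correct in the strongest sense: the ``only if'' half of the lemma is false for general rings, so no adjustment argument can close the gap. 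Let $k$ be a field and
$$R=k[x,y,u]/(xy,\ uy,\ u^2-x^2),$$
which has $k$-basis $\{x^iu^{\epsilon}\mid i\ge 0,\ \epsilon\in\{0,1\}\}\cup\{y^j\mid j\ge 1\}$, with chain $0\subset J_0=(x)\subset J_1=(x,u)\subset J_2=R$. Then $\rann_R(J_0)=(y)\neq 0$ and $0\neq\bar u\in\rann_{R/J_0}(J_1/J_0)$, while $R/J_1\cong k[y]$ is unital, so $m=2$. Since $u^2=x^2\in J_0$ we have $J_1^2\subseteq J_0$, hence $\rann_{J_1/J_0}(J_1/J_0)=J_1/J_0\neq 0$ and the stated condition fails at $k=1$. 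Nevertheless
$$\mathrm{Ker}(\Phi)=\{a\in J_1\mid J_0a=0,\ J_1a\subseteq J_0\}=\{a\in J_1\mid xa=0\}=0,$$
because multiplication by $x$ is injective on the span of the monomials $x^iu^{\epsilon}$ and $J_1$ lies in that span. Concretely, the nonzero kernel element $\bar u$ of $\Phi''$ admits no lift $u+c$, $c\in J_0$, with $x(u+c)=0$: this is exactly the ``cancellation against $\rann_R(J_0)$'' you said one would have to rule out, and it cannot be ruled out. Note also that the subquotients here are generalised matrix rings of the paper's type ($J_0\cong(B_0,x)$ with $B_0=k[x,u]/(u^2-x^2)$ reduced, $J_1/J_0\cong(k,0)$, $R/J_1\cong(k[y],1)$), so the extra left--right symmetry of Lemma \ref{annihilators} does not by itself rescue the converse either. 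The upshot: the parts you proved (conditions $\Rightarrow$ embedding, and the centre formula under those conditions) are exactly the parts of the lemma that are true, the part you could not prove is not provable, and the implication (c)$\Rightarrow$(b) in Theorem \ref{properties_aca}(2), which cites this lemma, inherits the same defect.
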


\begin{proof}
	The proof is by induction on $m\leq n$. If $m=0$, then, by Lemma \ref{embeddings_lemma},  $\Phi=\rho^0:R\rightarrow \mathrm{End}({_{R}J_0})$ is an embedding if and only if $\rann_R(J_0)=0$ in which case $c(R)=\Phi^{-1}(c(\mathrm{End}({_{R}J_0})))$ holds.
	
Let $m\geq 0$ and suppose that the statement has been proven for all rings $R$ with chains 	$0=J_{-1}\subset J_0 \subset \cdots \subset J_{n-1}\subset J_n=R$ such that $m$ is the least non-negative integer with $\rann_{R/J_{m-1}}(J_m/J_{m-1}) = 0$. Let $R$ be a ring where such least non-negative integer is $m+1$.  By Lemma \ref{embeddings_lemma},  $\Phi':R\rightarrow R/J_0 \times \mathrm{End}({_{R}J_0})$ defined by
	$\Phi'(a)=(a+J_0, \rho^0_a)$ is injective if and only if $\rann_{J_0}(J_0)=0$ holds.
 	Using the induction hypothesis on $R/J_0$,
	$\Phi'':R/J_0 \rightarrow  \prod_{k=1}^{m+1} c\left( \mathrm{End}({_{R/J_{k-1}}J_k/J_{k-1}})\right)$
	is injective if and only if $\rann_{J_k/J_{k-1}}(J_k/J_{k-1}) = 0$ for all $1\leq k < m+1$. 
	Therefore $\Phi=(\Phi'' \times \mathrm{id})\circ\Phi'$ is injective if and only if 
	 $\rann_{J_k/J_{k-1}}(J_k/J_{k-1}) = 0$ for all $0\leq k < m+1$. 
	 
	In case one of the equivalent conditions holds, 
	$c(R)=\Phi'^{-1}(c(R/J_0)\times c(\mathrm{End}({_{R}J_0})))$ by Lemma  \ref{embeddings_lemma} and	$c(R/J) = \Phi''^{-1} \left( \prod_{k=1}^{m+1} c\left( \mathrm{End}({_{R/J_{k-1}}J_k/J_{k-1}}\right) \right)$ by the induction hypothesis. This proves the claim on $c(R)$.
	\end{proof}

By induction on the length of the cell chain of an affine cellular algebra, we deduce the following Corollary from Theorem \ref{Theorem_cell_ideals}, 
Lemma \ref{embeddings} and Proposition \ref{Proposition_determinant}:

\begin{theorem}\label{properties_aca} 
	Let $A$ be an affine cellular algebra with cellular structure
	$$ 0=J_{-1} \subset J_0 \subset J_1 \subset \cdots \subset J_n = A,$$
	and $J_{j}/J_{j-1} = \wt{M_{n_j}(B_j)}=(M_{n_j}(B_j),\psi_j)$ for affine commutative $k$-algebras $B_j$ and matrices $\psi_j\in M_{n_j}(B_j)$.  
	Let $m=\mathrm{min}\{ k \mid \rann_{A/J_{k-1}}(J_k/J_{k-1})=0\}$.
	Then
	\begin{enumerate}
		\item $A$ satisfies a polynomial identity.
		\item The following statements are equivalent:
			\begin{enumerate}
				\item[(a)] $A/J_j$ is semiprime for all $j=0,1,\ldots, m-1$;
				\item[(b)] $B_j$ is reduced and $\mathrm{det}(\psi_j)$ is not a zero divisor in $B_j$ for all $j=0,1,\ldots, m-1$;
				\item[(c)] $\Phi: A \rightarrow \mathrm{End}\left(_{A/J_{m-1}}{J_m/J_{m-1}}\right) \times \cdots \times  \mathrm{End}(_{A}{J_0})$ is an embedding  and $B_j$ is reduced for all $j=0,1,\ldots, m-1$.
			\end{enumerate}
		In any of these cases 
		\begin{enumerate}
			\item[(i)] $\mathrm{End}(_{A/J_{j-1}}{J_j/J_{j-1}})\simeq M_{n_j}(B_j)$ for all $j=0,1,\ldots, m-1$;
			\item[(ii)] $c(A) = \Phi^{-1}\left( B_m \times \cdots \times B_0\right).$
		\end{enumerate} 
	\item If $\mathrm{det}(\psi_j)$ is invertible in $B_j$ for all $j$, then $A$ is isomorphic to its asymptotic algebra.
\end{enumerate}
\end{theorem}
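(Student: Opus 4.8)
The plan is to prove the three statements by reducing them to the single-cell-ideal results of Section 3 via induction on the length $n$ of the cell chain, exploiting the fact that each quotient $A/J_{j-1}$ is again an affine cellular algebra with cell ideal $J_j/J_{j-1}$.

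For statement (1), I would argue by induction on $n$. The base case is that $A/J_{n-1} \simeq \wt{M_{n_n}(B_n)}$, which satisfies a polynomial identity by Proposition \ref{GeneralisedMatrixRings}(1). For the inductive step, I would apply Theorem \ref{Theorem_cell_ideals}(1) to the cell ideal $J_0$ of $A$: since $A/J_0$ is affine cellular with a shorter cell chain, the induction hypothesis gives that $A/J_0$ is a PI-ring, and then Theorem \ref{Theorem_cell_ideals}(1) upgrades this to $A$ being a PI-ring.

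For statement (2), the equivalences (a)$\Leftrightarrow$(b) should follow from iterating Theorem \ref{Theorem_cell_ideals}(2). Concretely, for each $j<m$ the ideal $J_{j+1}/J_j$ is an affine cell ideal of the affine cellular algebra $A/J_j$ (with cell data $B_{j+1}$, $\psi_{j+1}$), and $A/J_{j+1} = (A/J_j)/(J_{j+1}/J_j)$; Theorem \ref{Theorem_cell_ideals}(2) then links semiprimeness of $A/J_j$ to the reducedness of $B_{j+1}$ and $\det(\psi_{j+1})$ being a non-zero divisor, provided the next quotient down is already semiprime. Running this from $j=m-1$ downward (note that $A/J_{m-1}$ has $J_m/J_{m-1}$ with zero right annihilator, hence $\det(\psi_m)$ is a non-zero divisor by Lemma \ref{annihilators}, and by Lemma \ref{Lemma_semiprime} together with reducedness of $B_m$ one gets the base semiprimeness) assembles the full equivalence. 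For (b)$\Leftrightarrow$(c), I would use Lemma \ref{embeddings}: the condition that $\det(\psi_j)$ is a non-zero divisor in $B_j$ is exactly the condition $\rann_{J_j/J_{j-1}}(J_j/J_{j-1})=0$ by Lemma \ref{annihilators}, which Lemma \ref{embeddings} shows is equivalent to $\Phi$ being an embedding. The identification $\mathrm{End}(_{A/J_{j-1}}{J_j/J_{j-1}})\simeq M_{n_j}(B_j)$ in (i) then comes from Proposition \ref{Proposition_determinant}(3) applied cell-by-cell, and the centre computation (ii) follows by combining the formula $c(R)=\Phi^{-1}(\prod c(\mathrm{End}(\cdots)))$ from Lemma \ref{embeddings} with the fact that the centre of each $M_{n_j}(B_j)$ is $B_j$ (scalar matrices), pulled back through the isomorphisms of (i).

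Statement (3) is the cleanest: if every $\det(\psi_j)$ is invertible in $B_j$, then Proposition \ref{Proposition_finitelygenerated}(4) shows each cell ideal $J_j/J_{j-1}$ is generated by a central idempotent and splits off a direct factor $M_{n_j}(B_j)$, so that $A/J_{j-1} \simeq (A/J_j) \times M_{n_j}(B_j)$. Iterating this splitting down the chain collapses $A$ into the direct product $M_{n_1}(B_1) \times \cdots \times M_{n_n}(B_n)$, which is precisely the asymptotic algebra. The main obstacle I anticipate is the bookkeeping in statement (2): one must carefully track which quotients are already known to be semiprime and verify that the inductive reduction via Theorem \ref{Theorem_cell_ideals}(2) respects the cutoff index $m$, since for $j\geq m$ the relevant annihilators need not vanish and the semiprimeness hypotheses are only imposed for $j<m$. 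Getting the direction of the induction and the role of $m$ exactly right, so that the three conditions (a), (b), (c) close up into a genuine cycle of implications rather than a chain with gaps, is where the real care is needed.
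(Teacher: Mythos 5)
Your plan follows the paper's own proof step for step: induction along the cell chain using Theorem \ref{Theorem_cell_ideals}(1) for part (1), Theorem \ref{Theorem_cell_ideals}(2) for (a)$\Leftrightarrow$(b), Lemmas \ref{embeddings} and \ref{annihilators} for (b)$\Leftrightarrow$(c) and for (ii), Proposition \ref{Proposition_determinant}(3) for (i), and Proposition \ref{Proposition_finitelygenerated}(4) for part (3). Your arguments for (1), (3), (b)$\Leftrightarrow$(c), (i) and (ii) are correct and are exactly the paper's.

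The gap sits precisely where you said ``the real care is needed'', in (a)$\Leftrightarrow$(b), and your cycle does not close. Theorem \ref{Theorem_cell_ideals}(2), applied to the algebra $A/J_j$ with cell ideal $J_{j+1}/J_j$, pairs semiprimeness of $A/J_j$ with the data $(B_{j+1},\psi_{j+1})$, assuming $A/J_{j+1}$ is semiprime. Running this for $j=0,\ldots,m-1$ therefore links condition (a), i.e.\ semiprimeness of $A/J_0,\ldots,A/J_{m-1}$, to conditions on $B_1,\ldots,B_m$ --- not to condition (b), which concerns $B_0,\ldots,B_{m-1}$. Concretely: your top step needs $B_m$ reduced, which (b) does not supply, and your scheme never produces the condition on $(B_0,\psi_0)$ that (b) demands; reaching it via Theorem \ref{Theorem_cell_ideals}(2) would require semiprimeness of $A=A/J_{-1}$, which (a) does not supply. (A smaller gap: Lemma \ref{Lemma_semiprime} makes the ring $J_m/J_{m-1}=\wt{M_{n_m}(B_m)}$ semiprime, not the possibly larger algebra $A/J_{m-1}$; one must transfer semiprimeness upward using $\rann_{A/J_{m-1}}(J_m/J_{m-1})=0$ and, via the involution, the vanishing of the left annihilator: a square-zero ideal $I$ of $A/J_{m-1}$ meets $J_m/J_{m-1}$ in a square-zero ideal of that semiprime ring, hence $I\cdot(J_m/J_{m-1})=0$ and $I=0$.) This off-by-one cannot be talked away, because the equivalence as printed is actually false: take $A=k[x]/(x^3)$ with $J_0=(x)$, so that $J_0\simeq (B_0,\psi_0)$ with $B_0=k[t]/(t^2)$, $\psi_0=t$, and $J_1/J_0\simeq k=B_1$; then $m=1$, condition (a) holds since $A/J_0\simeq k$ is semiprime, but (b) fails since $B_0$ is not reduced and $\psi_0$ is a zero divisor. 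What your induction does prove, and what the statement should say, is: $A/J_{j-1}$ is semiprime for all $j=0,\ldots,m$ if and only if $B_j$ is reduced for all $j=0,\ldots,m$ and $\mathrm{det}(\psi_j)$ is a non-zero divisor in $B_j$ for all $j=0,\ldots,m-1$ (the non-zero-divisor condition at level $m$ being automatic from the definition of $m$ and Lemma \ref{annihilators}). The paper's one-line proof of (2) cites the same results you do and silently skips this bookkeeping, so your proposal is faithful to the paper's method; but as a proof of the printed statement it has a genuine gap, and to make it rigorous you should prove the corrected equivalence above.
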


\begin{proof}
If the length of the cell chain is $n=0$, then $A=J_0\simeq M_{n_0}(B)$ is a semiprime (Noetherian) PI-algebra.

(1) Follows from Theorem \ref{Theorem_cell_ideals}(1) using induction. 

(2) The equivalence $(a)\Leftrightarrow (b)$ follows from Theorem \ref{Theorem_cell_ideals}(2) by using induction. The equivalence $(b)\Leftrightarrow (c)$ follows from Lemma \ref{embeddings} and Lemma \ref{annihilators}. 
Statement (2.i) follows from Proposition \ref{Proposition_determinant}(3), while (2.ii) follows from Lemma \ref{embeddings}.

(3) Follows from  Proposition \ref{Proposition_finitelygenerated}(4) using induction.
\end{proof}

We deduce from Theorem \ref{properties_aca}(2.c + 2.i)  that an affine cellular algebra $A$ with $B_i$  reduced and $\mathrm{det}(\psi_j)$ being non-zero divisors in $B_j$ for all $j$, embeds into $M_{n_m}\left(B_m\right) \times \cdots \times M_{n_0}\left(B_0\right)$ which is a factor of the asymptotic algebra of $A$.

\medskip

The {\it Gelfand-Kirillov dimension} $GKdim(A)$ of an $k$-algebra $A$ over a field $k$ is a measure of the rate of growth of the algebra in terms of any generating set (for the precise definition see \cite{McConnellRobson}*{Chapter 8}). The Gelfand-Kirillov dimension $GKdim(B)$ of a commutative affine $k$-algebra $B$ coincides with the Krull dimension $Kdim(B)$ of $B$ (see \cite{McConnellRobson}*{8.2.14}). Furthermore, the GK-dimension of a matrix ring $M_n(B)$ over an affine algebra $B$ coincides with
that of $B$ (see \cite{McConnellRobson}*{8.2.7}) and the GK-dimension of a finite direct product of affine algebras is the maximum of the GK-dimensions of its factors (see \cite{McConnellRobson}*{8.3.3}). 

\begin{remark}
 Let $k$ be a field and $A$ an affine cellular $k$-algebra with cell chain of length $n$ such that $\mathrm{det}(\psi_j)$ is invertible in $B_j$ for all $j$. Then 
 $A$ is isomorphic to its asymptotic algebra, by Theorem \ref{properties_aca}, and hence  $GKdim(A) = \mathrm{max}\left(Kdim(B_1), \ldots, Kdim(B_n)\right)$.
\end{remark}

\begin{corollary}\label{GKdim} Let $k$ be a field and $A$ an affine cellular $k$-algebra with cell chain 
$$0=J_{-1} \subset J_0\subset \cdots \subset J_n=A,$$ such that $J_j/J_{j-1}\simeq (M_{m_j}(B_j),\psi_j)$ for $1\leq j \leq n$. Suppose $B_j$ is reduced and  $\mathrm{det}(\psi_j)$ is not a zero divisor in $B_j$ for all $j$.
Then $$GKdim(A) \leq  \mathrm{max}\left(Kdim(B_1), \ldots, Kdim(B_m)\right),$$ where $m$ is the least non-negative integer with  $\rann_{R/J_{m-1}}(J_m/J_{m-1})=0$.
If all  ideals $J_l/J_{l-1}$ with $l\leq m$ are finitely generated  left $A/J_{l-1}$-modules, then equality holds.
\end{corollary}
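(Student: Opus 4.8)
The plan is to read off the inequality from the embedding of Theorem \ref{properties_aca} and to obtain the reverse inequality, under the finite generation hypothesis, by presenting each matrix algebra $M_{n_j}(B_j)$ as a module-finite extension of a homomorphic image of $A$.

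For the inequality, note first that the hypotheses (each $B_j$ reduced and each $\mathrm{det}(\psi_j)$ a non-zero divisor) are exactly condition (b) of Theorem \ref{properties_aca}(2); moreover at the top index $m$ the equality $\rann_{A/J_{m-1}}(J_m/J_{m-1})=0$ forces $\mathrm{det}(\psi_m)$ to be a non-zero divisor by Lemma \ref{annihilators}, so Proposition \ref{Proposition_determinant}(3) applies there as well. Hence, by Theorem \ref{properties_aca}(2.c)+(2.i), $\Phi$ is an embedding $\Phi: A\hookrightarrow M_{n_m}(B_m)\times\cdots\times M_{n_0}(B_0)$. Since GK-dimension is monotone under subalgebras, combining the facts that $GKdim$ of a finite direct product is the maximum of the factors (\cite{McConnellRobson}*{8.3.3}), that $GKdim(M_{n_j}(B_j))=GKdim(B_j)$ (\cite{McConnellRobson}*{8.2.7}) and that $GKdim(B_j)=Kdim(B_j)$ (\cite{McConnellRobson}*{8.2.14}) yields $GKdim(A)\leq \mathrm{max}\{Kdim(B_j)\mid j\leq m\}$, which is the right-hand side of the assertion.

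For equality I would argue one index at a time. Fix $l\leq m$, write $R=A/J_{l-1}$ and $M=J_l/J_{l-1}$, and consider the right multiplication $\rho^l:R\to \mathrm{End}(_RM)$, whose image I call $S_l$. By Proposition \ref{Proposition_determinant}(3) one has $\mathrm{End}(_RM)\simeq M_{n_l}(B_l)$, and since the right action of $\mathrm{End}(_RM)$ in Proposition \ref{endomorphismring1} is ordinary matrix multiplication, the isomorphism $\ov{\alpha}$ identifies $M$ with the regular (free rank-one) right $M_{n_l}(B_l)$-module; under this identification $S_l$ becomes a unital affine subalgebra of $M_{n_l}(B_l)$ and the right $R$-action on $M$ is multiplication by elements of $S_l$. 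By hypothesis $M$ is a finitely generated left $R$-module; since the involution $i$ descends to an anti-automorphism of $R$ fixing $M$, it is also finitely generated as a right $R$-module, hence as a right $S_l$-module, which under the identification above says exactly that $M_{n_l}(B_l)$ is a finitely generated right $S_l$-module.

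It then remains to invoke the standard fact that a module-finite extension of affine $k$-algebras has the same Gelfand--Kirillov dimension, giving $GKdim(S_l)=GKdim(M_{n_l}(B_l))=Kdim(B_l)$. As $S_l$ is a homomorphic image of $R=A/J_{l-1}$, and hence of $A$, monotonicity of GK-dimension under quotients gives $GKdim(A)\geq Kdim(B_l)$; taking the maximum over $l\leq m$ and combining with the first part yields the asserted equality. The step I expect to be the main obstacle is precisely this transfer of finiteness: one must verify that $M$ really is the free rank-one module over its endomorphism ring, so that finite generation of $M$ over $A$ upgrades to module-finiteness of the whole of $M_{n_l}(B_l)$ over the image $S_l$, and one must check that the relevant version (right modules over a non-unital ideal sitting inside $A$) of the module-finiteness principle for GK-dimension indeed applies.
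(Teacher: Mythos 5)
Your proof is correct, and its first half --- the inequality via the embedding $\Phi$ of Theorem \ref{properties_aca} combined with \cite{McConnellRobson}*{8.3.3}, \cite{McConnellRobson}*{8.2.7} and \cite{McConnellRobson}*{8.2.14} --- is exactly the paper's argument (your extra check at the top index, that $\rann_{A/J_{m-1}}(J_m/J_{m-1})=0$ forces $\mathrm{det}(\psi_m)$ to be a non-zero divisor, is careful but already subsumed in the hypothesis that this holds for all $j$). For the equality, however, you take a genuinely different route. The paper disposes of it in one line: by \cite{McConnellRobson}*{8.2.9}, $GKdim(\mathrm{End}(_RM))\leq GKdim(R)$ for any finitely generated left $R$-module $M$; applied to $R=A/J_{l-1}$ and $M=J_l/J_{l-1}$, whose endomorphism ring is $M_{n_l}(B_l)$ by Proposition \ref{Proposition_determinant}(3), this gives $Kdim(B_l)\leq GKdim(A/J_{l-1})\leq GKdim(A)$ at once. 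You instead exhibit $M_{n_l}(B_l)$ as a module-finite extension of the unital subalgebra $S_l=\rho^l(A/J_{l-1})$ and invoke the principle that module-finite extensions have equal GK-dimension. The two standard facts are interderivable (the module-finite principle follows from the endomorphism bound by viewing the overring, via left multiplication, inside the endomorphism ring of itself as a module over the subring), so in substance you re-prove the relevant instance of 8.2.9 by hand. Your extra steps do go through: the identification of $J_l/J_{l-1}$ with the free rank-one right $M_{n_l}(B_l)$-module is exactly what Proposition \ref{endomorphismring1} provides, since the isomorphism there is literally right matrix multiplication; the left-to-right transfer of finite generation via the involution is legitimate because the cell chain is $i$-stable by definition (though it is avoidable: the left $A/J_{l-1}$-action on the free rank-one right module is given by left multiplication by elements of $M_{n_l}(B_l)$, whose image is equally a homomorphic image of $A$, so left finite generation suffices directly); and the non-unitality worry is moot, since the module-finiteness principle is ultimately applied to the unital pair $S_l\subseteq M_{n_l}(B_l)$, noting $\rho^l(1)=\mathrm{id}$. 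What the paper's route buys is brevity; what yours buys is the slightly more informative structural statement that each $M_{n_l}(B_l)$ is module-finite over a quotient of $A$, rather than just the dimension inequality.
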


\begin{proof}
From Theorem \ref{properties_aca}, we obtain an embedding
$$ \Phi:A\hookrightarrow A' := M_{n_1}\left(B_1\right) \times \cdots \times M_{n_m}\left(B_m\right).$$ As before $GKdim(A')= \mathrm{max}\left(Kdim(B_1), \ldots, Kdim(B_m)\right).$ By \cite{McConnellRobson}*{8.2} $GKdim(A)\leq GKdim(A')$.
For any finitely generated module $M$ over $A$, one has $GKdim(\mathrm{End}(_AM))\leq GKdim(A)$ by \cite{McConnellRobson}*{8.2.9}). Since, by Proposition \ref{Proposition_determinant} $M_k(B)\simeq \mathrm{End}(_{A/J_{k-1}}J_k/J_{k-1})$, we obtain equality in case all modules $J_k/J_{k-1}$ are finitely generated $A/J_{k-1}$-modules.
\end{proof}

\begin{remark}
	 Recall from Proposition \ref{Proposition_finitelygenerated}(2) that  $J_l/J_{l-1}$ is finitely generated as left $A/J_{l-1}$-module if $B_l/\langle (\psi_l)_{ij}\rangle$ is a finitely generated $k$-module. This is the case for instance if the ideals $J_l/J_{l-1}$  are  idempotent or if $\mathrm{det}(\psi_l)$ is invertible in $B_l$ or if $B_l$ is an integral domain of Krull dimension $1$ and $\mathrm{det}(\psi_l)\neq 0$. 
	 In general, it is possible  that  affine cell ideals are not finitely generated as Proposition \ref{propositon_construction}  and Example \ref{example_non_finite_cell_ideal} shows.
\end{remark}

\section{Noetherian affine cellular Algebras}

 Example \ref{non_noetherian_example} shows that there exist affine cellular algebras with affine cell ideals that are not finitely generated as left ideals. These algebras are in particular not Noetherian and the question arises how to characterise Noetherian affine cellular algebras. 

 \begin{remark}\label{remark_Noetherian}
 Every right (resp. left) ideal $I$ of $R$ is a right (resp. left) ideal of $\wt{R}=(R,\psi)$, since $I*\wt{R} = I\psi R \subseteq I$.  Hence if $\wt{R}$ is right (resp. left) Noetherian, then so is $R$.

Let $I$ be a right ideal of $\wt{R}$. Then $\varphi(I)R=I\psi R = \{ \sum_{i} a_i \psi b_i \mid a_i\in I, b_i \in R\}$  is a right ideal of $R$. The map $I\mapsto \varphi(I)R$, which associates to a right ideal of $\wt{R}$ a right ideal of $R$, is order preserving. It is possible that this map is not injective as the following example shows. Let $K$ be a commutative ring, $R=K[x,y]$ and $\psi=x$. 
For all $n>0$, define $I_n=\sum_{i=1}^n Ky^i + xR$. These sets are ideals of $\wt{R}$, since 
$$I_n * \wt{R} =\sum_{i=1}^n xy^iR+x^2R = xyR  + x^2R \subseteq xR \subseteq I_n.$$
Thus $\varphi(I_n)R=xyR+x^2R=\varphi(I_m)R$ for all $n,m>0$. However $I_n\not\subseteq I_{n+1}$ since  $y^{n+1}\not\in I_n$. 

The ascending chain of ideals  $I_1\subset I_2 \subset \cdots $ is a proper ascending chain of ideals of $\wt{R}$ and shows that  $\wt{R}$ is not Noetherian, although $R$ is.
 \end{remark}

\begin{lemma}\label{Lemma_Noetherian} 
Let $k$ be a commutative ring, $R$ a $k$-algebra, $\psi \in R$ and $\wt{R}=(R,\psi)$.
Then $\wt{R}$ is right Noetherian if and only if $R$ is right Noetherian and $J/\varphi(J)R$ is a Noetherian $k$-module for all right ideals $J$ of $\wt{R}$.
\end{lemma}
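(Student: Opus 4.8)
The plan is to reduce everything to a single structural identity: for every right ideal $J$ of $\wt{R}$ one has $\varphi(J)R = J\psi R = J*\wt{R}$, and by the preceding Remark this is contained in $J$, so $J/\varphi(J)R$ makes sense. The observation I would isolate first, and which drives both directions, is that a $k$-submodule $N$ of $\wt{R}$ with $\varphi(J)R \subseteq N \subseteq J$ is \emph{automatically} a right ideal of $\wt{R}$: indeed $N*\wt{R} = N\psi R \subseteq J\psi R = \varphi(J)R \subseteq N$. Conversely, every right ideal of $\wt{R}$ in this range is a $k$-submodule. Hence the lattice of $k$-submodules of the quotient $J/\varphi(J)R$ is order-isomorphic to the lattice of right ideals $N$ of $\wt{R}$ with $\varphi(J)R \subseteq N \subseteq J$.

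For the forward implication, assume $\wt{R}$ is right Noetherian. That $R$ is right Noetherian is immediate from the Remark: every right ideal of $R$ is a right ideal of $\wt{R}$, so any ascending chain of right ideals of $R$ is a chain in $\wt{R}$ and must stabilise. Next, fix a right ideal $J$ of $\wt{R}$. Any ascending chain of $k$-submodules of $J/\varphi(J)R$ lifts to an ascending chain of $k$-submodules sandwiched between $\varphi(J)R$ and $J$; by the observation above each term is a right ideal of $\wt{R}$, so the chain stabilises. Therefore $J/\varphi(J)R$ is a Noetherian $k$-module.

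For the converse, assume $R$ is right Noetherian and each $J/\varphi(J)R$ is a Noetherian $k$-module. Given an ascending chain $J_1 \subseteq J_2 \subseteq \cdots$ of right ideals of $\wt{R}$, I would set $J = \bigcup_i J_i$, which is again a right ideal of $\wt{R}$. The right ideals $\varphi(J_i)R = J_i\psi R$ of $R$ form an ascending chain whose union is $\varphi(J)R$; since $R$ is right Noetherian it stabilises, say $\varphi(J_i)R = \varphi(J)R =: L$ for all $i \geq N$. Now $L = J_N*\wt{R} \subseteq J_N \subseteq J_i$ for $i \geq N$, so the images $J_i/L$ form an ascending chain of $k$-submodules of the Noetherian $k$-module $J/L = J/\varphi(J)R$. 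This chain stabilises, and since every $J_i$ with $i \geq N$ contains $L$, the equality $J_i/L = J_{i_0}/L$ forces $J_i = J_{i_0}$. Hence $\{J_i\}$ stabilises and $\wt{R}$ is right Noetherian.

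I expect the converse to be the delicate point. Knowing that $J/\varphi(J)R$ is Noetherian for each fixed $J$ does not by itself bound a chain of right ideals, because the ``base'' $\varphi(J_i)R$ can keep growing; it is precisely the hypothesis that $R$ be right Noetherian that freezes this base at a finite stage $N$, after which the problem collapses into a chain inside the single Noetherian $k$-module $J/\varphi(J)R$. A secondary care point is the non-unital bookkeeping: one must consistently treat right ideals of $\wt{R}$ as $k$-submodules and verify $N*\wt{R} \subseteq \varphi(J)R$, so that the submodule correspondence underlying both directions is exact.
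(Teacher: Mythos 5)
Your proof is correct and follows essentially the same route as the paper: the key observation that any $k$-submodule $N$ with $\varphi(J)R \subseteq N \subseteq J$ is automatically a right ideal of $\wt{R}$ gives the forward direction, and your converse stabilises the base chain $\varphi(J_i)R$ using right Noetherianity of $R$ and then finishes inside the single Noetherian $k$-module $J/\varphi(J)R$, exactly as the paper does (it takes $J=\sum_{n\geq N} I_n$ where you take the union, which is the same thing for an ascending chain).
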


\begin{proof}
Suppose $\wt{R}$ is right Noetherian. Then by Remark \ref{remark_Noetherian}, $R$ is also right Noetherian. Let $J$ be a right ideal of $\wt{R}$. Then any $k$-submodule $V$ of $J$ that contains $\varphi(J)R$ is also a right ideal of $\wt{R}$ since $V*R=\varphi(V)R \subseteq \varphi(J)R \subseteq V$.
Let $I_1 \subset I_2 \subset \cdots \subseteq J$ be any  ascending chain of $k$-submodules of $R$ containing $\varphi(J)R$. Then this  is also a chain of right ideals of $\wt{R}$ and must stop as $\wt{R}$ is right Noetherian. Therefore, $J/\varphi(J)R$ is a Noetherian $k$-module.

Now suppose that $R$ is right Noetherian and $J/\varphi(J)R$ is Noetherian $k$-module for all right ideals $J$ of $\wt{R}$.
Let $I_1\subseteq I_2 \subseteq \cdots $ be an ascending chain of right ideals of $\wt{R}$. Then $\varphi(I_1)R \subseteq \varphi(I_2)R\subseteq \cdots $ is an ascending chain of right ideals of $R$ and, as $R$ is right Noetherian, there exists $N>0$ such that  $\varphi(I_n)R = \varphi(I_N)R$ for all $n>N$. Let $J=\sum_{n\geq N} I_n$. Then also $\varphi(J)R = \varphi(I_N)R$ holds.
Hence the chain $I_N/\varphi(J)R \subseteq I_{N+1}/\varphi(J)R \subseteq \cdots \subseteq J/\varphi(J)R$ is an ascending chain of $k$-submodules of $J/\varphi(J)R$ which has to stop as $J/\varphi(J)R$ is a Noetherian $k$-module.

\end{proof}

\begin{example} 
	The example in Remark \ref{remark_Noetherian} shows that the Noetherian (affine cellular) algebra $R=k[x,y]$ has an affine cell ideal $J=Rx$ which is isomorphic to the non-Noetherian ring $\wt{R}=(R,x)$, even though $J$ is a Noetherian $R$-module.
\end{example}

Any affine cellular algebra is a PI-algebra which is semiprime under some suitable conditions. Any non-zero ideal of a semiprime PI-algebra contains a non-zero central element by a result of Rowen \cite{Rowen}. This applies in particular to the cell ideals $J_k$ of a semiprime affine cellular algebra. A (right) ring of fractions of a ring $R$ is an overring $Frac(R)$ of $R$ such that any non-zero divisor of $R$ is invertible in $Frac(R)$ and any element of $Frac(R)$ can be written in the form $ab^{-1}$ for some $a,b \in R$ (see \cite{McConnellRobson}*{3.1.2})). Posner's Theorem \cite{McConnellRobson}*{13.6.5} says that the ring of fractions $Frac(R)$ of a prime PI ring is obtained by inverting its non-zero central elements. One of the consequences of Posner's Theorem is the following Theorem:

\begin{theorem}[{\cite{McConnellRobson}*{13.6.14}}]\label{generalPosner}
A semiprime PI-ring $R$ is right Noetherian and finitely generated over its centre $c(R)$ if and only if $c(R)$ is a Noetherian ring.
\end{theorem}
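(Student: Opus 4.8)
The plan is to treat the two implications separately; writing $C=c(R)$, almost all of the content sits in the implication ``$C$ Noetherian $\Rightarrow$ $R$ right Noetherian and module-finite over $C$''. For the reverse implication I would assume $R$ is right Noetherian and finitely generated as a $C$-module and simply quote the non-commutative Eakin--Nagata--Formanek theorem: a ring that is module-finite over a central subring $C$ and right Noetherian forces $C$ to be Noetherian. Note that this direction uses neither semiprimeness nor the PI hypothesis, so it is the cheap half.

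For the main implication I would first observe that \emph{module-finiteness already suffices}. If $R$ is a finitely generated module over the Noetherian commutative ring $C$, then $R$ is a Noetherian $C$-module, and since $C$ is central every right ideal of $R$ is a $C$-submodule; hence $R$ satisfies the ascending chain condition on right ideals and is right (and by symmetry left) Noetherian. So the whole problem reduces to showing that a semiprime PI ring with Noetherian centre is module-finite over its centre.

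To obtain module-finiteness I would localise centrally and exploit the reduced trace. Let $S$ be the set of regular elements of $C$. Since $R$ is semiprime PI, every nonzero ideal of $R$ contains a nonzero central element by Rowen's theorem \cite{Rowen}, and from this one deduces that each $s\in S$ remains regular in $R$ (otherwise $\rann_R(s)$ is a nonzero ideal containing a nonzero central $z$, giving $sz=0$ and contradicting regularity of $s$ in $C$); thus $R$ embeds in $Q=S^{-1}R$. By the semiprime form of Posner's theorem \cite{McConnellRobson}*{13.6.5}, $Q$ is semisimple Artinian and finite-dimensional over $B=S^{-1}C$, which is a finite product of fields because $C$ is reduced and Noetherian. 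On $Q$ the reduced trace $\mathrm{tr}\colon Q\to B$ yields a non-degenerate symmetric form $(x,y)\mapsto\mathrm{tr}(xy)$ (central simple algebras are separable), so choosing a $B$-basis $q_1,\dots,q_d$ of $Q$ lying in $R$ and inverting the Gram matrix $\bigl(\mathrm{tr}(q_iq_j)\bigr)$ sandwiches $R$ inside the finitely generated module $\delta^{-1}\sum_i T q_i$ over the commutative trace ring $T=C[\mathrm{tr}(r):r\in R]\subseteq B$, where $\delta=\det\bigl(\mathrm{tr}(q_iq_j)\bigr)$.

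The hard part will be controlling $T$. To finish one needs $T$ to be module-finite over $C$, so that $R$, being a $C$-submodule of the Noetherian $C$-module $\delta^{-1}\sum_i T q_i$, becomes finitely generated over $C$. This is delicate outside the affine setting, since the integral closure of a Noetherian ring inside its total quotient ring need not be module-finite, so one cannot naively bound $T$. The resolution is exactly the Artin--Procesi structure theory that underlies \cite{McConnellRobson}*{13.6.5}: one stratifies $\mathrm{Spec}(C)$ by PI-degree (finitely many strata, as $C$ is Noetherian) so that $R$ is locally Azumaya of constant rank and hence integral and module-finite over $C$, with $T$ finite over $C$. Establishing this module-finiteness of the trace ring is the single step I expect to require the real work; the reduction to it and the passage back from module-finiteness to the Noetherian conclusion are then formal.
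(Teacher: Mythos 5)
A preliminary remark: the paper does not prove this statement at all --- it is quoted, with attribution, from \cite{McConnellRobson}*{13.6.14} (essentially Formanek's theorem on PI rings with Noetherian centre). So your attempt can only be measured against the standard argument behind that citation, not against anything internal to the paper.

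Your outer reductions are correct: the implication ``$R$ right Noetherian and module-finite over $C=c(R)$ $\Rightarrow$ $C$ Noetherian'' is exactly the Eakin--Nagata--Formanek--Jategaonkar theorem, and module-finiteness over a Noetherian central subring does immediately force $R$ to be right (and left) Noetherian. The genuine gap is the step you flag yourself, and the repair you sketch does not close it. Module-finiteness of the trace ring $T=C[\mathrm{tr}(r):r\in R]$ over $C$ is not a final technicality; it is the theorem in disguise. Moreover, the proposed mechanism --- stratify $\mathrm{Spec}(C)$ by PI-degree ``so that $R$ is locally Azumaya of constant rank'' --- is false: a prime Noetherian PI ring that \emph{is} module-finite over its Noetherian centre need not be Azumaya near its ramified primes, no matter how one localizes. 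Schelter's algebra, reproduced as Example \ref{Schelterexample} of this very paper, is such an example: at the maximal ideal of $c(A)\simeq K\oplus xL[x]$ corresponding to $x=0$, the fibre contains the nonzero square-zero ideal coming from the off-diagonal entries, so it is not Azumaya there, nor after any central localization. What Artin--Procesi actually yields is that $R[d^{-1}]$ is Azumaya over $C[d^{-1}]$ for a nonzero central-polynomial value $d$, i.e.\ control on a dense open stratum only; on the closed complement one controls quotients $R/P$, not $R$, and ``finitely generated after inverting $d$'' plus ``finitely generated quotients'' does not formally give ``finitely generated'' (already for modules: $\bigcup_{n} t^{-n}k[t]$ is not finitely generated over $k[t]$, though it becomes cyclic after inverting $t$).

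The classical proof replaces your Gram-matrix/reduced-trace computation by a central-polynomial identity whose coefficients land in $C$ itself, so no integral extension ever appears. In the prime case, let $Q$ be the quotient ring (Posner), $F=\Frac(C)=Z(Q)$, $\dim_F Q=n^2$, and let $f$ be a multilinear central polynomial for $n\times n$ matrices that is alternating in $n^2$ designated variables (Razmyslov/Formanek). Choose $a_1,\dots,a_{n^2}\in R$ forming an $F$-basis of $Q$ and further entries $\bar b$ in $R$ with $d:=f(a_1,\dots,a_{n^2},\bar b)\neq 0$; then $d$ is a regular element of $C$. Writing $r=\sum_i\lambda_i a_i$ with $\lambda_i\in F$, multilinearity and alternation give $f(a_1,\dots,a_{i-1},r,a_{i+1},\dots,a_{n^2},\bar b)=\lambda_i d$, hence for every $r\in R$
\[
  d\,r \;=\; \sum_{i=1}^{n^2} f(a_1,\dots,a_{i-1},r,a_{i+1},\dots,a_{n^2},\bar b)\,a_i ,
\]
where each coefficient is a central-polynomial evaluation at elements of $R$ and therefore lies in $R\cap Z(Q)=C$. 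Thus $dR\subseteq \sum_i C a_i$, so $R\cong dR$ is a $C$-submodule of a finitely generated $C$-module, and Noetherianity of $C$ finishes the argument exactly via your own reduction. (The semiprime case is then assembled from the prime one using Rowen's theorem and the finitely many minimal primes forced by the Noetherian centre; this is also where your appeal to a ``semiprime form of Posner'' needs justification, since a general semiprime PI ring need not even be Goldie.) This identity is the missing idea in your proposal.
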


\begin{question}
When is the centre of a semiprime affine cellular algebra Noetherian?
\end{question}

A sufficient condition for a semiprime affine cellular algebra $A$ to be Noetherian is that its centre $c(A)$ is Noetherian in which case $A$ would also be finitely generated over $c(A)$. This seems to be the case for some affine Hecke algebras. For instance, it has been argued in \cite{BrownGordonStroppel}*{5.1} using  \cite{Lusztig}*{3.11} that the centre of an (extended) affine  Hecke algebra is a polynomial ring in finitely many variables.

\begin{corollary}
	Let $A$ be an affine cellular algebra such that  $B_j$ is reduced and $\mathrm{det}(\psi_j)$ is not a zero divisor in $B_j$ for all $j$. Denote by $\Phi:A\rightarrow M_{n_1}(B_1)\times \cdots \times M_{n_m}(B_m)$ the embedding from Theorem \ref{properties_aca}. Then $A$ is Noetherian and finitely generated over its centre if and only if
	$c(A)=\Phi^{-1}(B_1\times \cdots \times B_m)$ is a Noetherian ring.
\end{corollary}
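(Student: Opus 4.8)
The plan is to recognise the statement as a direct application of the Posner-type result recorded in Theorem \ref{generalPosner}, once two facts about $A$ have been assembled: that $A$ is a PI-ring and that $A$ is semiprime. The first is immediate from Theorem \ref{properties_aca}(1). For the second I would argue by descending induction along the cell chain $0=J_{-1}\subset J_0\subset\cdots\subset J_n=A$. The top quotient $A/J_{n-1}$ is isomorphic to $\wt{M_{n_n}(B_n)}$ (since $J_n=A$), which is semiprime by Proposition \ref{GeneralisedMatrixRings}(2) because $B_n$ is reduced and $\mathrm{det}(\psi_n)$ is not a zero divisor. Assuming $A/J_j$ is semiprime, Theorem \ref{Theorem_cell_ideals}(2) applied to the affine cell ideal $J_j/J_{j-1}$ of $A/J_{j-1}$ (again using that $B_j$ is reduced and $\mathrm{det}(\psi_j)$ is a non-zero divisor) shows $A/J_{j-1}$ is semiprime; running this down to $j=0$ yields that $A=A/J_{-1}$ is semiprime. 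Here it is essential that the hypothesis is imposed for \emph{all} indices, not merely for $j\leq m-1$ as in the equivalent conditions of Theorem \ref{properties_aca}(2)(b), since otherwise the bottom steps of the induction, and in particular semiprimeness of $A$ itself, would not be available.

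With $A$ established as a semiprime PI-ring, I would invoke Theorem \ref{generalPosner} directly: $A$ is right Noetherian and finitely generated over $c(A)$ if and only if $c(A)$ is a Noetherian ring. To match the two-sided Noetherian condition in the statement, note that in the forward direction a Noetherian $A$ is in particular right Noetherian, so Theorem \ref{generalPosner} forces $c(A)$ to be Noetherian; conversely, if $c(A)$ is Noetherian then Theorem \ref{generalPosner} makes $A$ right Noetherian and module-finite over the commutative Noetherian ring $c(A)$, whence $A$ is left Noetherian as well, so ``Noetherian'' may be read two-sidedly without loss. The identification $c(A)=\Phi^{-1}(B_1\times\cdots\times B_m)$ is nothing but Theorem \ref{properties_aca}(2.ii), so no separate computation of the centre is needed.

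There is essentially no serious obstacle here, as the corollary is an assembly of previously proved results. The only point demanding genuine care is the semiprimeness induction above, specifically the observation that semiprimeness of $A$ itself (the quotient $A/J_{-1}$) is not literally furnished by the equivalent conditions of Theorem \ref{properties_aca}(2), which only concern $A/J_j$ for $j\leq m-1$, and must instead be extracted from the full hypothesis via Theorem \ref{Theorem_cell_ideals}(2). Once that is in place, the equivalence drops out of Theorem \ref{generalPosner} and the description of $c(A)$ from Theorem \ref{properties_aca}.
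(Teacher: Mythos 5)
Your proof is correct and takes essentially the same route as the paper, which states this corollary without proof as an immediate combination of Theorem \ref{generalPosner} with Theorem \ref{properties_aca}: under the hypothesis on all $B_j$ and $\mathrm{det}(\psi_j)$, the algebra $A$ is a semiprime PI-ring with $c(A)=\Phi^{-1}(B_1\times\cdots\times B_m)$, and the equivalence then drops out of the Posner-type theorem exactly as you assemble it (your top-down induction via Theorem \ref{Theorem_cell_ideals}(2) is the detail the paper leaves implicit). One minor quibble: your side remark that semiprimeness of $A$ ``would not be available'' from the conditions of Theorem \ref{properties_aca}(2) alone is overstated when $m\geq 1$, since condition (a) at $j=0$ gives $A/J_0$ semiprime and condition (b) at $j=0$ then yields $A$ semiprime by one application of Theorem \ref{Theorem_cell_ideals}(2); the hypothesis for \emph{all} $j$ is genuinely indispensable only in the degenerate case $m=0$, where those conditions are vacuous.
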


Small, Stafford and Warfield proved that any semiprime affine $k$-algebra $A$ of GK-dimension one is Noetherian and finitely generated over its centre (see \cite{SmallStaffordWarfield}).

\begin{corollary}\label{NoetherianCentre}
Let $A$ be an affine cellular algebra that is affine as $k$-algebra such that, for all $j$, $B_j$ is reduced, $Kdim(B_j)\leq 1$ and $\mathrm{det}(\psi_j)$ is not a zero divisor in $B_j$. Then $A$ is (left and right) Noetherian and finitely generated over its centre $c(A)$. Moreover,  $c(A)$ is a reduced affine $k$-algebra of Krull dimension at most one.
\end{corollary}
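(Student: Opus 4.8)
The plan is to combine the embedding of Theorem \ref{properties_aca} with the structural results on affine $k$-algebras of Krull dimension at most one. First I would observe that the hypotheses---$B_j$ reduced and $\mathrm{det}(\psi_j)$ a non-zero divisor for all $j$---are exactly condition (b) of Theorem \ref{properties_aca}(2), so $A$ is semiprime and we obtain the embedding $\Phi:A\hookrightarrow M_{n_1}(B_1)\times\cdots\times M_{n_m}(B_m)$. Since $A$ is affine over $k$ and each $B_j$ has $Kdim(B_j)\leq 1$, the GK-dimension estimate of Corollary \ref{GKdim} gives $GKdim(A)\leq \max_j Kdim(B_j)\leq 1$.

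The key step is then to invoke the Small--Stafford--Warfield theorem \cite{SmallStaffordWarfield}: a semiprime affine $k$-algebra of GK-dimension at most one is (left and right) Noetherian and finitely generated over its centre. Applying it to $A$ immediately yields that $A$ is Noetherian and finitely generated over $c(A)$. I expect this to be the cleanest route, as both ingredients---semiprimeness via Theorem \ref{properties_aca}(2) and the GK-dimension bound via Corollary \ref{GKdim}---are already available.

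It remains to identify $c(A)$ and verify its stated properties. By Theorem \ref{properties_aca}(2.ii) the centre satisfies $c(A)=\Phi^{-1}(B_m\times\cdots\times B_0)$, so $\Phi$ restricts to an embedding of $c(A)$ into the reduced commutative ring $B_1\times\cdots\times B_m$; hence $c(A)$ is itself reduced. Being a subalgebra of a finite product of affine $k$-algebras of Krull dimension at most one does not by itself force $c(A)$ to be affine or of bounded dimension, so here I would use that $A$ is finitely generated over $c(A)$ together with $A$ being affine: by the Artin--Tate lemma (for the commutative ring $c(A)$ sitting inside the affine, module-finite algebra $A$), $c(A)$ is itself an affine $k$-algebra. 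Its Krull dimension then agrees with $GKdim(c(A))=Kdim(c(A))$, which is bounded above by $GKdim(A)\leq 1$.

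The main obstacle is the last point: ensuring $c(A)$ is affine rather than merely a subring of a nice ring. The finite generation of $A$ as a $c(A)$-module (from Small--Stafford--Warfield) is precisely what makes the Artin--Tate argument go through, so the logical order matters---one must establish Noetherianity and module-finiteness \emph{before} concluding that the centre is affine. Once $c(A)$ is known to be affine and reduced, the dimension bound $Kdim(c(A))\leq 1$ follows from the inclusion $c(A)\subseteq B_1\times\cdots\times B_m$ or equivalently from $GKdim(c(A))\leq GKdim(A)\leq 1$, completing the proof.
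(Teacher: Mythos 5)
Your proof is correct and follows essentially the same route as the paper: semiprimeness via Theorem \ref{properties_aca}, the bound $GKdim(A)\leq 1$ via Corollary \ref{GKdim}, and then the Small--Stafford--Warfield theorem to get that $A$ is Noetherian and a finitely generated $c(A)$-module, with the centre's affineness deduced afterwards from that module-finiteness. The only cosmetic differences are that the paper cites Montgomery--Small \cite{MontgomerySmall} for affineness of the centre (which is exactly the noncommutative Artin--Tate-type argument you invoke) and deduces reducedness of $c(A)$ directly from semiprimeness of $A$, rather than from the embedding $c(A)=\Phi^{-1}(B_m\times\cdots\times B_0)$ into a product of reduced rings as you do.
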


\begin{proof} By Theorem \ref{properties_aca}, $A$ is a semiprime $PI$-algebra.
By Corollary \ref{GKdim}, $GKdim(A)\leq 1$, as \mbox{$Kdim(B_j)\leq 1$} for all $j$. By the Small-Stafford-Warfield Theorem, $A$ is Noetherian and finitely generated over its centre $c(A)$.  Montgomery and Small have shown in \cite[Proposition 2]{MontgomerySmall} that the centre $c(A)$ of an affine $k$-algebra $A$ is itself affine over $k$ if $A$ is finitely generated over $c(A)$. Thus $c(A)$ is an affine $k$-algebra. Furthermore, $GKdim(c(A))\leq GKdim(A)\leq 1$. Since the GK-dimension coincides with the Krull dimension for commutative affine $k$-algebras,  $Kdim(c(A))\leq 1$.  Since $A$ is semiprime, $c(A)$ is reduced.
 \end{proof}

\begin{question}
When is a semiprime affine cellular algebra over $k$ affine as $k$-algebra?
\end{question}

Let $A=TL^a_n(q)$  be the affine Temperley-Lieb algebra on $n$ (even) strands with parameter $q$ over the field $k$ (see \cite[Section 2.3]{KoenigXi}). We will consider $q$ an indeterminate over $k$. The algebra $A$ is affine as $k$-algebra.  Let $J_{2j}$ be the ideal of $A$ generated by all affine diagrams with at most $2j$ through strings. Moreover, there exist a filtration $$0=J_{-2} \subset J_0 \subset J_2 \subset \cdots \subset J_{n-2} \subset J_n = A$$
with quotients $J_{2j}/J_{2(j-1)}$ isomorphic to generalised matrix rings $V_j \otimes B_j \otimes V_j \simeq \wt{M_{d_j}(B_j)}$ with an associated bilinear form given by the matrix $\psi_j$. Here $V_j$ is a finite dimensional vector space with basis consisting of affine partial diagrams and $B_j=k[x_j,x_j^{-1}]$ if $j\neq 0$ and $B_0=k[x_0]$.  

The next Lemma shows that the determinants $\mathrm{det}(\psi_j)$ are non-zero elements of $B_j[q]$.
\begin{lemma} The determinant $\mathrm{det}(\psi_j)$ is a polynomial in $q, x_j$ and $x_j^{-1}$  and, when considered as a polynomial in $q$ over $B_j$, is monic with leading term $q^{\mathrm{dim}(V_j)\cdot\frac{n-2j}{2}}$.\end{lemma}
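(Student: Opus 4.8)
The plan is to read off the entries of the Gram matrix $\psi_j$ from the diagram calculus and then isolate the top $q$-degree part of the determinant by a Leibniz-expansion argument. Recall that $\psi_j$ is the matrix of the cellular bilinear form on the cell module with $2j$ through-strings; its rows and columns are indexed by the basis of $V_j$ consisting of affine partial half-diagrams. The entry $\psi_j(v,w)$ is computed by reflecting $w$ and stacking it against $v$: each resulting closed loop contributes a factor $q$, and each strand winding around the annulus contributes a power $x_j^{\pm 1}$. As the glued diagram has only finitely many loops and windings, every entry lies in $k[q,x_j,x_j^{-1}]$, and since $\det(\psi_j)$ is a polynomial in the entries, it does too. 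This proves the first assertion.

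The substance of the lemma lies in controlling the $q$-degree of the entries. First I would note that every half-diagram in $V_j$ has exactly $\tfrac{n-2j}{2}$ cups, since the $n-2j$ points not carrying a through-string are paired off. Stacking $v$ against itself then closes each cup against its mirror image into a contractible loop, leaving the $2j$ through-strings running straight through without winding; hence the diagonal entry $\psi_j(v,v)$ attains the maximal loop number $\tfrac{n-2j}{2}$ in its term of highest $q$-degree, with coefficient $1$ and no accompanying power of $x_j$, while all remaining contributions have strictly fewer loops and hence strictly smaller $q$-degree. Second, for $v\neq w$ I would argue that gluing two \emph{distinct} half-diagrams yields strictly fewer than $\tfrac{n-2j}{2}$ closed loops (or none at all), so that $\deg_q\psi_j(v,w)<\tfrac{n-2j}{2}$. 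In short, the diagonal uniquely maximises the loop count.

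Granting these two facts, I would finish via the Leibniz expansion $\det(\psi_j)=\sum_{\sigma}\mathrm{sgn}(\sigma)\prod_{v}\psi_j(v,\sigma(v))$, the product running over a basis of $V_j$. The identity permutation contributes $\prod_v\psi_j(v,v)$, whose highest $q$-degree term is $q^{\dim(V_j)\cdot\frac{n-2j}{2}}$ with coefficient $1$. Any $\sigma\neq\mathrm{id}$ moves at least two basis elements, so its product contains at least two off-diagonal factors, each of $q$-degree at most $\tfrac{n-2j}{2}-1$, forcing its total $q$-degree to be at most $\dim(V_j)\cdot\frac{n-2j}{2}-2$. Thus no other permutation can interfere with the top term, and $\det(\psi_j)$ is monic in $q$ over $B_j$ with leading term $q^{\dim(V_j)\cdot\frac{n-2j}{2}}$, as required.

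I expect the genuine obstacle to be the second degree estimate: that stacking two distinct half-diagrams strictly lowers the loop count. This is precisely where the annular nature of the affine diagrams must be confronted, since one has to exclude the possibility that windings around the annulus conspire to recreate $\tfrac{n-2j}{2}$ loops from a mismatched pair. A clean way to organise this is to express the number of loops formed on gluing as the number of matched cup-pairs in the superimposed nesting patterns of $v$ and $w$, and to show that this quantity is maximised exactly when the two cup-configurations coincide; verifying that winding cannot raise this count is the delicate combinatorial point.
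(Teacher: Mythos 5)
Your proposal follows essentially the same route as the paper: both arguments rest on the two facts that each diagonal entry equals $q^{\frac{n-2j}{2}}$ (cups closing against their mirror images) and that every off-diagonal entry has strictly smaller $q$-degree, and then extract the leading term by a determinant expansion --- you via the Leibniz formula, the paper via cofactor expansion and induction on $\dim(V_j)$, an immaterial difference. The off-diagonal degree bound that you honestly flag as the delicate annular point is likewise only asserted (not proved) in the paper, so your treatment is at the same level of detail as the published proof.
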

\begin{proof}
Let $d_j=\mathrm{dim}(V_j)$.  For an affine partial diagram $v$ with $k$ horizontal edges, it is easy to see that
$\psi_j(v,v)=q^k$  so that these are precisely the values which occur on the diagonal of $\psi_j$. Moreover, for all
affine partial diagrams $w$ not equal to $v$, we get $\psi_j(v,w)=q^mb_j$ $(b_j\in B_j)$ with $m<k$. Thus $\psi_j$ is a a $d_j\times d_j$-matrix, where $q^k$ occurs on the diagonal and the exponents of powers of $q$ occurring outside the diagonal are smaller than $k$. The result now follows for example by considering the cofactor expansion of $\psi_j$ and an induction on $d_j$.
\end{proof}

Since the determinants $\mathrm{det}(\psi_j)$ are monic polynomials in $q$ over $B$, there are only finitely many specialisations of $q$ to values in $k$ such that $\mathrm{det}(\psi_j)$ is zero.

\begin{corollary} The affine Temperley-Lieb algebra $A=TL_n^a(q)$ is a semiprime Noetherian PI-algebra with $GKdim(A)=1$ for all but finitely many specialisations of the parameter $q$. Moreover, its centre $c(A)$ is an affine $k$-algebra of Krull dimension $1$, $A$ is finitely generated over $c(A)$ and embeds into its asymptotic algebra.
\end{corollary}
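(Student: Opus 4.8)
The plan is to deduce the Corollary entirely from the structural results already in hand---Theorem \ref{properties_aca}, Corollary \ref{GKdim} and Corollary \ref{NoetherianCentre}---once I have verified their common hypotheses for $TL_n^a(q)$ at all but finitely many specialisations of $q$. First I would record the ring-theoretic features of the cell data. Each $B_j$ is either $k[x_j,x_j^{-1}]$ or $k[x_0]$, hence a commutative affine \emph{domain} over the field $k$ of Krull dimension one, and in particular reduced; moreover $A=TL_n^a(q)$ is affine as a $k$-algebra. Thus the only hypothesis of the cited results that is not automatic is that $\mathrm{det}(\psi_j)$ be a non-zero divisor in $B_j$.

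The key reduction is to secure that non-zero-divisor condition for generic $q$. By the preceding Lemma, $\mathrm{det}(\psi_j)\in B_j[q]$ is monic in $q$ with leading term $q^{\mathrm{dim}(V_j)\cdot\frac{n-2j}{2}}$. Writing $\mathrm{det}(\psi_j)$ as a Laurent polynomial in $x_j$ with coefficients in $k[q]$, its $x_j^{0}$-coefficient, regarded as a polynomial in $q$ over $k$, is nonzero---it carries the monomial $q^{\mathrm{dim}(V_j)\cdot\frac{n-2j}{2}}$ coming from that leading term---so it has only finitely many roots in $k$. Hence for each fixed $j$ the specialised element $\mathrm{det}(\psi_j)$ vanishes in $B_j$ for only finitely many values of $q$; discarding the union of these finitely many bad sets over the finitely many indices $j$ leaves a cofinite set of specialisations for which $\mathrm{det}(\psi_j)\neq 0$ in $B_j$ for every $j$. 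Since each $B_j$ is a domain, $\mathrm{det}(\psi_j)$ is then a non-zero divisor. I would fix such a specialisation of $q$ for the rest of the argument.

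With these hypotheses in force I would assemble the conclusions by direct citation. Applying Theorem \ref{properties_aca} (equivalently, applying Theorem \ref{Theorem_cell_ideals}(2) inductively down the cell chain) shows that $A$ is a semiprime PI-algebra and produces the embedding $\Phi\colon A\hookrightarrow M_{n_m}(B_m)\times\cdots\times M_{n_0}(B_0)$, which is a direct factor of the asymptotic algebra; this is exactly the required embedding into the asymptotic algebra. Corollary \ref{NoetherianCentre} then applies verbatim and delivers that $A$ is left and right Noetherian, finitely generated over its centre $c(A)$, with $c(A)$ a reduced affine $k$-algebra of Krull dimension at most one. Finally, Corollary \ref{GKdim} gives $GKdim(A)\leq\max_j Kdim(B_j)=1$.

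The step I expect to be the main obstacle---the one point that needs more than a bare citation---is upgrading the \emph{inequalities} to the asserted \emph{equalities} $GKdim(A)=1$ and $Kdim(c(A))=1$. For the first, I would invoke the remark following Corollary \ref{GKdim}: since each $B_j$ is a domain of Krull dimension one with $\mathrm{det}(\psi_j)\neq 0$, every layer $J_l/J_{l-1}$ is a finitely generated left $A/J_{l-1}$-module, whence equality holds in Corollary \ref{GKdim} and $GKdim(A)=1$. For the second, I would use that $A$ is a finitely generated module over $c(A)$, so that $GKdim(c(A))=GKdim(A)=1$; as $c(A)$ is commutative and affine its GK-dimension equals its Krull dimension, giving $Kdim(c(A))=1$ exactly and completing the proof.
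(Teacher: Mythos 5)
Your proposal is correct and takes essentially the same route as the paper, which deduces this corollary (without written proof) from the preceding Lemma on $\mathrm{det}(\psi_j)$ being monic in $q$ together with Theorem \ref{properties_aca}, Corollary \ref{GKdim} and Corollary \ref{NoetherianCentre}. Your added details---the $x_j^0$-coefficient argument for why a monic-in-$q$ determinant can vanish in $B_j$ for only finitely many specialisations, and the upgrade of the inequalities to $GKdim(A)=1$ and $Kdim(c(A))=1$ via finite generation of the cell layers and of $A$ over $c(A)$---are exactly the steps the paper leaves implicit, carried out correctly.
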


\begin{question}
Is the centre of a semiprime Noetherian affine cellular algebra affine?
\end{question}	

\medskip

We finish with a couple of examples and remarks:

\begin{example}[Schelter {\cite{Schelter}*{p 253}}]\label{Schelterexample}
	Let $K\leq L$ be a field extension with intermediate subfields $K_1$ and $K_2$ such that $K=K_1\cap K_2$ and
	$[L:K_i]<\infty$ for $i=1,2$. Set
	$$A=\left( \begin{array}{cc} K_1 + xL[x] & xL[x] \\ xL[x] & K_2 +xL[x]\end{array}\right) = \left( \begin{array}{cc} K_1
	& 0 \\ 0 & K_2\end{array}\right) \oplus x M_2\left(L[x]\right).$$ Let  $J_1=A$ and $J_0  = x M_2\left(L[x]\right)$. Then
	$A/J_0 \simeq K_1\times K_2 = B_1$  and $J_0 \simeq \wt{M_2\left(B_0\right)} = (M_2(B_0), \psi)$ with $B_0=L[x]$ and
	$\psi=\left(\begin{array}{cc} x & 0 \\ 0 & x \end{array}\right).$ Furthermore, let the matrix transpose be the involution. 
	The algebras $B_1$ and $B_0$ are reduced  and $\mathrm{det}(\psi) = x^2$ is not a zero divisor in $B_0$. Moreover, $A$ is a Noetherian semiprime $K$-algebra. The centre of $A$ can be easily computed as:
	$$ c(A) = \left\{ \left(\begin{array}{cc} k+xf & 0 \\ 0 & k+xf\end{array}\right) \mid k\in K, f\in L[x]\right\} \simeq
	K\oplus xL[x].$$
	The algebra $A$ is an {\it {affine} cellular algebra} with the cell ideal chain $J_0\subseteq J_1=A$ if and only if  $B_0$ and $B_1$ are affine $k$-algebras. Moreover, $B_0$ and $B_1$ are affine $k$-algebra  if and only if $B_0=L[x]$ is an affine $K$-algebra if and only if $L$ is an affine $K$-algebra if and only if  $[L:K]<\infty$.
\end{example}

\begin{remark}\label{example1} By Goldie's Theorem \cite{McConnellRobson}*{2.3.6}, any semiprime right Noetherian ring $R$ has a semisimple Artinian ring of fractions $\Frac(R)$. As a consequence of Goldie's Theorem, we obtain an embedding of a Noetherian semiprime affine cellular algebra $A$  into a finite direct product of matrix rings over division rings that are finite dimensional over their centres.
   These division algebras might not be affine anymore since their centres might be transcendental field extensions of the base field.
   
 For a concrete example, let $k$ be a field and $A=k[x]$. Then $A$ is an affine cellular $k$-algebra by taking $J_1=A$ and $J_0  = x k[x]$.
Furthermore, $A/J_0 = k = B_1$ and $J_0 \simeq \wt{B_0}$ with $B_0=k[x]$ and $\psi=(x)$. The asymptotic algebra of $A$ is $k\times k[x]$, while $A$ is euqal to $k[x]$. The ring of fractions $\Frac(A)$
of $A$ is the fraction field of $k[x]$, i.e. the function field $k(x)$, which is not anymore an affine $k$-algebra.

The element $\psi$ in this example, as well as in Schelter's Example \ref{Schelterexample}, is a non-zero divisor in $A$. In
both cases $A$ is a prime ring and  Posner's Theorem \cite{McConnellRobson}*{13.6.5} says that the ring of fractions $Frac(A)$ is obtained by inverting its non-zero central elements. In particular let $A$ be  Schelter's algebra in \ref{Schelterexample} whose centre $c(A)$ is isomorphic to the diagonal matrix with entries in $K+xL[x]$. 
If $[L:K]$ is finite, then $K+xL[x] \subseteq L[x]$ is a finite ring extension and hence an integral extension. Thus the fraction field of $L[x]$ can be obtained by inverting all the non-zero elements of $K+xL[x]$ as the extension is integral, i.e. $\Frac(c(A)) \simeq \Frac(L[x]) = L(x)$. Since, by Posner's theorem, $\Frac(A)$ can be obtained also by inverting the non-zero elements of $c(A)$, we get  $$\Frac(A) \simeq A\otimes \Frac(C) \simeq  A\otimes L(x) \simeq M_2(L(x)).$$
\end{remark}

\begin{remark}
The centre in example \ref{Schelterexample}   was $c(A)=K\oplus xL[x]$. Setting $J_1=c(A)$ and $J_0=xL[x]$ we see that $J_0=\wt{L[x]}=(L[x],x)$ and $c(A)/J_0=K$ has again an (affine) cellular structure (provided $L$ is affine).
\end{remark}

We conclude with the following question:
\begin{question}
Is the centre of an affine cellular algebra again affine cellular?
\end{question}

\section*{Acknowledgments}
Most of this work has been done during a visit of the first and third author to the Universit\"at Stuttgart in 2015/2016. Both would like to thank the members of the Institut für Algebra und Zahlentheorie for their warm hospitality. They would like to thank in particular the second author for all his help and effort that made their stay possible. The third author acknowledges financial support from FCT (Portugal) through the grant SFRH/BSAB/113788/2015 as well as from DFG Schwerpunktprogramm Darstellungstheorie 1388. Moreover, the first and third author carried out their research in the framework of CMUP (UID/MAT/00144/2013), which is funded by FCT with national (MEC) and European structural funds (FEDER), under the partnership agreement PT2020.

\begin{bibdiv}
 \begin{biblist}
 
 \bib{Brown1955}{article}{
   author={Brown, W. P.},
   title={Generalized matrix algebras},
   journal={Canad. J. Math.},
   volume={7},
   date={1955},
   pages={188--190},
   issn={0008-414X},
   review={\MR{0067868}},
}
 
 \bib{Brown}{book}{
   author={Brown, W. C.},
   title={Matrices over commutative rings},
   series={Monographs and Textbooks in Pure and Applied Mathematics},
   volume={169},
   publisher={Marcel Dekker, Inc., New York},
   date={1993},
   pages={viii+281},
   isbn={0-8247-8755-2},
   review={\MR{1200234 (93k:15028)}},
}

\bib{BrownGordonStroppel}{article}{
	author={Brown, K. A.},
	author={Gordon, I. G.},
	author={Stroppel, C. H.},
	title={Cherednik, Hecke and quantum algebras as free Frobenius and
		Calabi-Yau extensions},
	journal={J. Algebra},
	volume={319},
	date={2008},
	number={3},
	pages={1007--1034},
	issn={0021-8693},
	review={\MR{2379091}},
	doi={10.1016/j.jalgebra.2007.10.026},
}

\bib{Cui_BMW}{article}{ 
   author = {{Cui}, W.},
    title = {Affine cellularity of affine Birman-Murakami-Wenzl algebras},
  journal = {ArXiv e-prints},
   eprint = {1406.3516},
     year = {2014},
    month = {jun},
   adsurl = {http://adsabs.harvard.edu/abs/2014arXiv1406.3516C},
  adsnote = {Provided by the SAO/NASA Astrophysics Data System}
}

\bib{Cui_Brauer}{article}{ 
   author = {{Cui}, W.},
    title = {Affine cellularity of affine Brauer algebras},
  journal = {ArXiv e-prints},
   eprint = {1406.3517},
     year = {2014},
    month = {jun},
   adsurl = {http://adsabs.harvard.edu/abs/2014arXiv1406.3517C},
  adsnote = {Provided by the SAO/NASA Astrophysics Data System}
}

\bib{Cui_BLN}{article}{ 
   author={{Cui}, W.},
   title={Affine cellularity of BLN algebras},
   journal={J. Algebra},
   volume={441},
   date={2015},
   pages={582--600},
   issn={0021-8693},
   review={\MR{3391938}},
   doi={10.1016/j.jalgebra.2015.06.031},
}
		
\bib{DrenskyFormanek}{book}{
   author={Drensky, V.},
   author={Formanek, E.},
   title={Polynomial identity rings},
   series={Advanced Courses in Mathematics. CRM Barcelona},
   publisher={Birkh\"auser Verlag, Basel},
   date={2004},
   pages={viii+200},
   isbn={3-7643-7126-9},
   review={\MR{2064082}},
   doi={10.1007/978-3-0348-7934-7},
}

\bib{GuilhotMiemietz}{article}{ 
   author={Guilhot, J.},
   author={Miemietz, V.},
   title={Affine cellularity of affine Hecke algebras of rank two},
   journal={Math. Z.},
   volume={271},
   date={2012},
   number={1-2},
   pages={373--397},
   issn={0025-5874},
   review={\MR{2917149}},
   doi={10.1007/s00209-011-0868-9},
}

\bib{KleshchevLoubertMiemietz}{article}{ 
   author={Kleshchev, A. S.},
   author={Loubert, J. W.},
   author={Miemietz, V.},
   title={Affine cellularity of Khovanov-Lauda-Rouquier algebras in type
   $A$},
   journal={J. Lond. Math. Soc. (2)},
   volume={88},
   date={2013},
   number={2},
   pages={338--358},
   issn={0024-6107},
   review={\MR{3106725}},
   doi={10.1112/jlms/jdt023},
}
	
\bib{KeshchevLoubert}{article}{ 
   author={Kleshchev, A. S.},
   author={Loubert, J. W.},
   title={Affine cellularity of Khovanov-Lauda-Rouquier algebras of finite
   types},
   journal={Int. Math. Res. Not. IMRN},
   date={2015},
   number={14},
   pages={5659--5709},
   issn={1073-7928},
   review={\MR{3384453}},
   doi={10.1093/imrn/rnu096},
}

\bib{Kleshchev}{article}{ 
   author={Kleshchev, A. S.},
   title={Affine highest weight categories and affine quasihereditary
   algebras},
   journal={Proc. Lond. Math. Soc. (3)},
   volume={110},
   date={2015},
   number={4},
   pages={841--882},
   issn={0024-6115},
   review={\MR{3335289}},
   doi={10.1112/plms/pdv004},
}

\bib{KoenigXi}{article}{
   author={Koenig, S.},
   author={Xi, C.C.},
   title={Affine cellular algebras},
   journal={Adv. Math.},
   volume={229},
   date={2012},
   number={1},
   pages={139--182},
   issn={0001-8708},
   review={\MR{2854173}},
   doi={10.1016/j.aim.2011.08.010},
}

\bib{Lusztig}{article}{
	author={Lusztig, G.},
	title={Affine Hecke algebras and their graded version},
	journal={J. Amer. Math. Soc.},
	volume={2},
	date={1989},
	number={3},
	pages={599--635},
	issn={0894-0347},
	review={\MR{991016}},
	doi={10.2307/1990945},
}

\bib{McConnellRobson}{book}{
    Author = {{McConnell}, J.C. },
   author={{Robson}, J.C.},
    Title = {{Noncommutative Noetherian rings. With the cooperation of L. W. Small. Reprint\-ed with corrections from
the 1987 original.}},
    Edition = {Reprint\-ed with corrections from the 1987 original},
    ISBN = {0-8218-2169-5/hbk},
    Pages = {xx + 636},
    Year = {2001},
    Publisher = {Providence, RI: American Mathematical Society (AMS)},
    Language = {English},
}

\bib{MontgomerySmall}{article}{
    Author = {{Montgomery}, S.},
    author={{Small}, L.},
    Title = {{Fixed Rings of Noetherian Rings}},
  	journal={Bull. London Math. Soc.},
	volume={13},
	date={1981},
	pages={33--38}
}

\bib{Nakajima}{article}{ 
   author={Nakajima, H.},
   title={Affine cellularity of quantum affine algebras: an appendix to
   ``Affine cellularity of BLN-algebras'' by Weideng Cui},
   journal={J. Algebra},
   volume={441},
   date={2015},
   pages={601--608},
   issn={0021-8693},
   review={\MR{3391939}},
   doi={10.1016/j.jalgebra.2015.07.017},
}

\bib{Rowen}{article}{
   author={Rowen, L.},
   title={Some results on the center of a ring with polynomial identity},
   journal={Bull. Amer. Math. Soc.},
   volume={79},
   date={1973},
   pages={219--223},
   issn={0002-9904},
   review={\MR{0309996 (46 \#9099)}},
}

\bib{Schelter}{article}{
   author={Schelter, W.},
   title={Integral extensions of rings satisfying a polynomial identity},
   journal={J. Algebra},
   volume={40},
   date={1976},
   number={1},
   pages={245--257},
   issn={0021-8693},
   review={\MR{0417238 (54 \#5295)}},
}

\bib{SmallStaffordWarfield}{article}{
   author={Small, L. W.},
   author={Stafford, J. T.},
   author={Warfield, R. B., Jr.},
   title={Affine algebras of Gel\cprime fand-Kirillov dimension one are PI},
   journal={Math. Proc. Cambridge Philos. Soc.},
   volume={97},
   date={1985},
   number={3},
   pages={407--414},
   issn={0305-0041},
   review={\MR{778674}},
   doi={10.1017/S0305004100062976},
}

\end{biblist}
\end{bibdiv}

\end{document}